\newcommand{\colorurl}[1]{\textcolor{magenta}{\url{#1}}}
\crefname{hypothesis}{Hypothesis}{Hypotheses}
\title{Accelerated Gradient Descent by Concatenation of Stepsize Schedules
\thanks{Submitted to the editors DATE.\funding{This work is partly supported by the National Key R\&D Program of China under grant 2023YFA1009300, and the Major Program of NSFC (72394360,72394364).}} }
\author{Zehao Zhang\thanks{School of Data Science, Fudan University (\email{zhangzehao24@m.fudan.edu.cn}).}
\and Rujun Jiang\thanks{Corresponding author. School of Data Science, Shanghai Key Laboratory for Contemporary Applied Mathematics, Fudan University (\email{rjjiang@fudan.edu.cn}).}
} 
\DeclareMathOperator{\argmax}{argmax}
\def\cbl{\color{blue}}
\def\x{{\mathbf x}}
\def\g{{\mathbf g}}
\def\N{{\mathbb N}}
\def\R{{\mathbb R}}
\newcommand*{\CPP}{{Theorem \ref{them-CPP}}}
\newcommand*{\CPD}{Theorem \protect\ref{them-CPD}}
\newcommand*{\CDP}{Theorem \protect\ref{them-CDP}}
\newcommand*{\CC}{Theorems \ref{them-CPP} and \ref{them-CPD}}
\newcommand{\algA}{{\circ}}
\newcommand{\algB}{{\bullet}}
\newcommand{\algC}{{\blacktriangle}}
\newcommand{\dual}{g-bounded\xspace}
\renewcommand{\leqslant}{\leq}
\renewcommand{\geqslant}{\geq}
\renewenvironment{equation*}{\[}{\]\ignorespacesafterend}
\renewcommand{\cref}{\Cref}
\def\cbl{{}}
\def\cbll{{}}
\begin{document}

\maketitle


\begin{abstract}
    This work considers stepsize schedules for gradient descent on smooth convex objectives. We extend the existing literature and propose a unified technique for constructing stepsizes with analytic bounds for an arbitrary number of iterations. This technique constructs new stepsize schedules by concatenating {two stepsize schedules with fewer steps}. Using this approach, we introduce two new families of stepsize schedules, achieving a convergence rate of $O(n^{-\log_2(\sqrt 2+1)})$ with state-of-the-art constants for the objective value and gradient norm of the last iterate, respectively. Furthermore, our analytically derived stepsize schedules either match or surpass the existing best numerically computed stepsize schedules.
\end{abstract}

\begin{keywords}
Gradient descent, Worst-case complexity analysis, Performance estimation problem, Convergence rate
\end{keywords}

\begin{MSCcodes}
90C25, 65K05, 68Q25
\end{MSCcodes}

\section{Introduction}
We consider the {memoryless fixed-step gradient descent method} for minimizing a convex and $L$-smooth function \( f: \mathbb{R}^d \to \mathbb{R} \),  where $L$-smoothness refers to the gradient of $f$ being $L$-Lipschitz continuous. Starting from an initial point \( \mathbf{x}_0 \in \mathbb{R}^d \), the gradient descent method follows the iterative update rule:
\begin{equation}\label{eq:gd}
\mathbf{x}_{i+1} = \mathbf{x}_i - \frac{h_i}{L} \nabla f(\mathbf{x}_i), \quad i = 0, 1, 2, \dotsc, n-1,
\end{equation}
where \( h_i \in \mathbb{R} \) represents the stepsize scaled by \( 1/L \). For simplicity, we refer to \( h = [h_0, \dotsc, h_{n-1}]^T \in \mathbb{R}^n \) as the stepsize schedule (SS), omitting the scaling by \( 1/L \).

For the class of functions under consideration, the textbook result states that using constant stepsizes guarantees an  $O(1/n)$ convergence rate. However, there has been growing interest in developing tighter convergence analyses for gradient descent.
{Drori and Teboulle~\cite{drori2014performance} introduced the performance estimation problem (PEP) framework, and established a tight bound for constant-stepsize gradient descent.  Taylor et al.~\cite{taylor2017smooth} later presented necessary and sufficient conditions for smooth strongly convex interpolation, and showed that the PEP can characterize the exact worst-case performance of any fixed-step first-order algorithm for smooth strongly convex unconstrained optimization.}
Building on this, Das Gupta et al.~\cite{gupta2024branch} recently proposed a branch and bound method to solve the PEP framework to compute the optimal SSs for fastest rates in worst-case performance. Their numerical results suggest that an accelerated rate of $0.156/n^{1.178}$  may be achievable with improved (non-constant) SSs.
The PEP framework, together with the interpolation conditions, has also facilitated the development of more efficient analytical SSs.
Teboulle and Vaisbourd~\cite{teboulle2023elementary}  proposed a dynamic stepsize sequence that achieves better convergence rate compared to the standard textbook result. %
Grimmer~\cite{grimmer2024provably} investigated periodic SSs, and suggested the potential for acceleration with this approach.

Although complexity bounds for SSs can be established using PEP, the reformulation of the problem through SDP is rather intricate, making it challenging to derive tight analytical SSs.
Very recently, two concurrent papers, one by Altschuler and Parrilo~\cite{silver2} and another by Grimmer et al.~\cite{grimmer2023longsteps}, confirmed that accelerated rates of $O(n^{-\varrho})$ and $O(n^{-1.0564})$, respectively, are possible for the worst-case performance. Here we use the convention $\varrho = \log_2(\sqrt{2} + 1) \approx 1.2716$. The rate of~\cite{silver2} was later improved by Grimmer et al.~\cite{grimmer2024acceleratedobjective} by a constant factor. {The works of~\cite{silver2} and~\cite{grimmer2024acceleratedobjective} leveraged the dual formulation of the PEP, wherein worst-case bounds were derived via nonnegative linear combinations of interpolation inequalities---a consequence of strong duality in the PEP framework~\cite{taylor2017smooth}.
}

{\cbl
It is important to note that the methods considered in this paper are restricted to {memoryless fixed-step gradient descent} methods of the form~\eqref{eq:gd}, where the SS is determined in advance and does not depend on the objective function $f$, and each update uses only the current gradient $\nabla f(\mathbf{x}_i)$. This excludes more general fixed-step first-order schemes that combine past gradients, as well as methods that introduce additional dynamics or adapt stepsizes based on observed information. For instance, Nesterov's Accelerated Gradient~\cite{nesterov1983method} uses momentum to attain an $O(1/n^2)$ worst-case rate. Its variants, such as the Optimized Gradient Method (OGM)~\cite{kim2016optimized,kim2017ogm} for the objective value and OGM-G~\cite{kim2021ogm-g} for the gradient norm, further improve the respective worst-case constants.  Similarly, adaptive strategies, e.g., Polyak's stepsize~\cite{polyak1987introduction}, the Barzilai–Borwein method~\cite{barzilai1988two}, and other recent adaptive gradient methods~\cite{malitsky2019adaptive, malitsky2024adaptive,zhou2025adabb}, adjust stepsizes based on observed gradients and thus fall outside the fixed-step framework. Although the best-known convergence rate for memoryless fixed-step gradient descent is of order $O(n^{-\varrho})$ and thus slower than the $O(1/n^2)$ rate achievable by momentum methods, determining the fundamental limit of acceleration attainable solely via stepsize scheduling remains an important and interesting theoretical problem.
}

{
A closely related question to determining the ultimate memoryless fixed-step acceleration limit is the construction and determination of optimal SSs, particularly for larger $n$.} Previous studies on optimal SSs~\cite{altschuler2018greed, daccache2019performance, eloi2022worst} were limited to very small $n$. For instance, Daccache~\cite{daccache2019performance} determined the optimal stepsize for $n = 2$ to be $[\sqrt 2,(3+\sqrt{9+8\sqrt{2}})/4]^T$. Numerical results from Das Gupta et al.~\cite{gupta2024branch} suggested optimal SSs of $[1.414,2.414,1.5]^T$ for $n =3$ and $[1.414,1.601,3.005,1.5]^T$ for $n = 4$. These computed optimal SSs showed no clear indication of simple closed-form expressions for general $n$.  This gap between known accelerated rates and truly optimal schedules remains a key area of ongoing research.

Upon closer examination of recent works~\cite{teboulle2023elementary, silver2, grimmer2024acceleratedobjective, rotaru2024exact}, {\cbll it can be seen that their SS constructions follow a common pattern, which we refer to as concatenation.} This methodology involves combining two SSs (say, $h_a\in\R^{n_1}$ and   $h_b\in\R^{n_2}$) to create a longer one, i.e.,
\begin{equation*}\label{intro-concat}
    h_c =[h_a^T,\alpha,h_b^T]^T\in\mathbb R^{n_1+n_2+1},
\end{equation*}
where $\alpha$ is some particular constant depending on $h_a$ and   $h_b$. Although this approach has been implicitly used in~\cite{silver2, grimmer2024acceleratedobjective}, its full  potential has not been thoroughly explored. In this work, we formalize this intuition by establishing a series of theorems, using different types of $h_a$ and $h_b$, and various choices of $\alpha$. These theorems not only provide a theoretical foundation for constructing efficient SSs, but also offer insights that may lead us closer to determining truly optimal schedules in smooth convex optimization.
We propose three special classes of SSs with certain convergence guarantees, constructed through simple concatenation. Based on this, we propose two families of SS: $h_\algB^{(n)}$ for reducing the objective gap, and $h_\algC^{(n)}$ for the gradient norm. Both achieve the state-of-the-art asymptotic rate of $(0.4239 + o(1)) \cdot n^{-\varrho}$, offering better constant dependence than the existing best results in~\cite{grimmer2024acceleratedobjective}. {Notably}, $h_\algB^{(n)}$ not only matches but also outperforms the optimal numerical SSs in~\cite{gupta2024branch}, suggesting its potential optimality in non-asymptotic scenarios.
As a byproduct, we use the concatenation techniques to generate dynamic stepsize sequences
that exhibit a nearly periodic pattern with $O(1/n)$ convergence rate, and allow for arbitrary constant  improvement depending on the length of the ``period''.

The paper is organized as follows. In~\cref{sect:dominance}, we present preliminaries for gradient descent and introduce two new notions for SSs. In~\cref{sect:concat}, we establish two concatenation techniques.
Based on the techniques, in~\cref{sect:applications} we propose a family of SSs $h_\algB^{(n)}$, and analyze their asymptotic convergence.
In~\cref{sect:H-duality}, we introduce another concatenation technique to construct SSs $h_\algC^{(n)}$  and evaluate their performance on the gradient norm.
Then in~\cref{sec:experiment}, we present {comparisons} to demonstrate the advantages of our proposed SSs.
Finally, we conclude the paper in~\cref{sec:con}.

\paragraph{Notation}Let $\mathbb N=\{1,2,\dots\}$ and $\mathbb N_0=\{0,1,2,\dots\}$.
 Let $\R^n$ denote the $n$-dimensional real space and $\R_+^n$ (resp. $\R_-^{n}$) denote the nonnegative (resp. nonpositive) orthant. By convention, when $n = 0$, both $\R^n$ and $\R_+^n$ (resp. $\R_-^{n}$) are considered to contain the zero-dimensional vector (i.e., the empty vector). Unless otherwise specified, SSs can have zero length, i.e., when we write $h\in \R^n$, we allow $n\in \N_0$.
For an empty sum, such as $\sum_{i=0}^{-1}$, the summation is defined to be zero.
Let $\mathcal C^{1,1}_L$ be the class of convex, $L$-smooth (i.e., $\nabla f(x)$ is $L$-Lipschitz) functions $f(\mathbf x): \mathbb R^d\to\mathbb R$, where $L>0$ is a fixed constant.
We denote the global minimizer of $f$ by $\mathbf x_\star$ and the corresponding minimum value by $f_\star= f(\mathbf x_\star)$.
We define $\mathbf g_i = \nabla f(\mathbf x_i)$ and $f_i = f(\mathbf x_i)$ to be the gradient and the function value at $\mathbf x_i$, respectively.
We frequently use the convention $G=[\mathbf g_0,\dotsc,\mathbf g_n]\in\R^{d\times (n+1)}$.
We use the convention $\varrho = \log_2(\sqrt{2} + 1) \approx 1.2716$.
\section{Two New Notions on SSs}\label{sect:dominance}
\subsection{Preliminaries}
Our work is based on the following well-known fact of smooth convex functions, whose proof can be referred to, for example,~\cite[Theorem 5.8]{beck2017first}.
\begin{lemma}\label{lemma:Qij0} For a function $f\in\mathcal C^{1,1}_L $, it holds that
\begin{equation*}
    f(\mathbf x) -f(\mathbf y)+\langle\nabla f(\mathbf x), \mathbf y -\mathbf x\rangle + \frac{1}{2L}\Vert\nabla f(\mathbf x)-\nabla f(\mathbf y)\Vert^2\leqslant 0,~\forall\mathbf x,\mathbf y\in\mathbb R^d.
\end{equation*}
\end{lemma}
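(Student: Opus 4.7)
The plan is to prove this via the classical auxiliary-function trick: reduce the claim to a single application of the descent lemma at the minimizer of a suitably shifted convex function. For fixed $\mathbf{x}$, I would introduce
\begin{equation*}
g(\mathbf{z}) := f(\mathbf{z}) - \langle \nabla f(\mathbf{x}), \mathbf{z}\rangle,
\end{equation*}
and observe that $g$ is convex (sum of a convex and an affine function), is $L$-smooth (its gradient differs from $\nabla f$ only by a constant), and satisfies $\nabla g(\mathbf{x}) = 0$. Hence $\mathbf{x}$ is a global minimizer of $g$.

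Next, I would apply the standard descent lemma for $L$-smooth functions to $g$ at the point $\mathbf{y}$, using the gradient step $\mathbf{y}^+ := \mathbf{y} - \tfrac{1}{L}\nabla g(\mathbf{y})$, which yields
\begin{equation*}
g(\mathbf{y}^+) \leq g(\mathbf{y}) - \frac{1}{2L}\|\nabla g(\mathbf{y})\|^2.
\end{equation*}
Combining this with the minimization property $g(\mathbf{x}) \leq g(\mathbf{y}^+)$ gives $g(\mathbf{x}) \leq g(\mathbf{y}) - \tfrac{1}{2L}\|\nabla g(\mathbf{y})\|^2$. Substituting back the identities $g(\mathbf{x}) = f(\mathbf{x}) - \langle \nabla f(\mathbf{x}),\mathbf{x}\rangle$, $g(\mathbf{y}) = f(\mathbf{y}) - \langle \nabla f(\mathbf{x}),\mathbf{y}\rangle$, and $\nabla g(\mathbf{y}) = \nabla f(\mathbf{y}) - \nabla f(\mathbf{x})$, and rearranging, produces exactly the stated inequality.

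There is no genuine obstacle here: the only nontrivial ingredient is the descent lemma, which follows from integrating the $L$-Lipschitz gradient bound along the segment $[\mathbf{y}, \mathbf{y}^+]$ and does not itself require convexity. Since $\mathbf{x},\mathbf{y}\in \mathbb{R}^d$ were arbitrary, the conclusion holds in full generality, and one could alternatively cite \cite[Theorem 5.8]{beck2017first} directly; the short self-contained argument above is recorded for completeness and because the same auxiliary-function device will reappear implicitly in the interpolation-based analyses used later in the paper.
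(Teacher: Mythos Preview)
Your argument is correct and is precisely the standard proof of this inequality; the paper itself does not give a proof but simply cites \cite[Theorem 5.8]{beck2017first}, which is exactly the reference you mention and whose proof is the auxiliary-function argument you wrote out. So your proposal is a faithful, self-contained expansion of the paper's citation.
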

Lemma \ref{lemma:Qij0} immediately leads to the following results.

\begin{lemma} \label{lemma:Qij} In our gradient descent setting, it holds that
\begin{subequations}
\begin{align} & Q_{i,j} \triangleq f_i - f_j + \langle \mathbf g_i,\mathbf x_j-\mathbf x_i\rangle+\frac{1}{2L}\Vert\mathbf g_i-\mathbf g_j\Vert^2\leqslant 0,\label{Qij} \\
  &  Q_{i,\star} \triangleq f_i-f_\star - \langle \mathbf g_i,\mathbf x_i-\mathbf x_\star\rangle + \frac{1}{2L}\Vert \mathbf g_i\Vert^2\leqslant 0,~ 
 Q_{\star,i} \triangleq f_\star-f_i  + \frac{1}{2L}\Vert \mathbf g_i\Vert^2\leqslant 0\label{Qij2}
\end{align}
\end{subequations}
for any indices $i,j\in \{0,1,\dotsc,n\}$ and $(f,\mathbf x_0)\in\mathcal C^{1,1}_L\times \mathbb R^d$.
\end{lemma}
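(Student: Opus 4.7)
The lemma is an immediate consequence of \cref{lemma:Qij0}, and the plan is to obtain each of the three inequalities by a single application of that lemma with suitably chosen arguments, using the convention $\mathbf g_i = \nabla f(\mathbf x_i)$ and $f_i = f(\mathbf x_i)$.

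First, I would apply \cref{lemma:Qij0} with $\mathbf x = \mathbf x_i$ and $\mathbf y = \mathbf x_j$ for arbitrary $i,j\in\{0,\dotsc,n\}$. Since $f\in\mathcal C^{1,1}_L$ and the iterates $\mathbf x_0,\dotsc,\mathbf x_n\in\mathbb R^d$ lie in the domain, the inequality applies and directly yields
\begin{equation*}
f_i - f_j + \langle \mathbf g_i,\mathbf x_j-\mathbf x_i\rangle+\frac{1}{2L}\Vert\mathbf g_i-\mathbf g_j\Vert^2\leq 0,
\end{equation*}
which is exactly $Q_{i,j}\leq 0$ as defined in \eqref{Qij}.

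Next, I would derive the two inequalities in \eqref{Qij2} by specializing one of the arguments to the global minimizer $\mathbf x_*$ and using the first-order optimality condition $\nabla f(\mathbf x_*) = \mathbf 0$. For $Q_{i,*}\leq 0$, I would set $\mathbf x = \mathbf x_i$ and $\mathbf y = \mathbf x_*$ in \cref{lemma:Qij0}; the gradient-difference term collapses to $\tfrac{1}{2L}\Vert\mathbf g_i\Vert^2$, and rewriting $\langle\mathbf g_i,\mathbf x_*-\mathbf x_i\rangle = -\langle\mathbf g_i,\mathbf x_i-\mathbf x_*\rangle$ gives the stated form. For $Q_{*,i}\leq 0$, I would instead set $\mathbf x = \mathbf x_*$ and $\mathbf y = \mathbf x_i$; the inner-product term $\langle\nabla f(\mathbf x_*),\mathbf x_i-\mathbf x_*\rangle$ vanishes and the gradient-difference term again reduces to $\tfrac{1}{2L}\Vert\mathbf g_i\Vert^2$, producing the second line of \eqref{Qij2}.

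There is essentially no obstacle here: the whole argument is a bookkeeping exercise in substitution, and the only subtle point to flag is that $\mathbf x_*$ exists and $\nabla f(\mathbf x_*)=\mathbf 0$, which is implicit in the assumption that $f$ attains its minimum at $\mathbf x_*$ with value $f_*$. Thus the proof should be only a few lines long and requires no additional machinery beyond \cref{lemma:Qij0}.
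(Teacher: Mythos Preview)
Your proposal is correct and matches the paper's proof essentially verbatim: the paper also obtains \eqref{Qij} by setting $(\mathbf x,\mathbf y)=(\mathbf x_i,\mathbf x_j)$ in \cref{lemma:Qij0}, and derives both lines of \eqref{Qij2} by specializing to $(\mathbf x_i,\mathbf x_*)$ and $(\mathbf x_*,\mathbf x_i)$ respectively after noting $\nabla f(\mathbf x_*)=\mathbf 0$.
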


\begin{proof}
Letting $\mathbf x = \mathbf x_i$ and $\mathbf y = \mathbf x_j$ in Lemma \ref{lemma:Qij0} yields~\eqref{Qij}. For~\eqref{Qij2}, recall that $\mathbf g_\star=\nabla f(\mathbf x_\star) =\mathbf 0$ because $\mathbf x_\star$ is the minimizer. Thus, letting $(\mathbf x, \mathbf y) = (\mathbf x_i,\mathbf x_\star)$ yields $Q_{i,\star}\leqslant 0$, while $(\mathbf x, \mathbf y) = (\mathbf x_\star, \mathbf x_i)$ leads to $Q_{\star,i}\leqslant 0$.
\end{proof}
In the remainder of this paper, we will use the convention
\begin{equation}\label{eq:defv}
v =  [Q_{0,\star},\dotsc,Q_{n,\star}]^T\in\R_-^{n+1}.
\end{equation}
The notations $Q_{i,j}$, $Q_{i,\star}$, and $Q_{\star,i}$ introduced in Lemma \ref{lemma:Qij} will be used consistently throughout this article. Additionally, we note a useful inequality for $Q_{i,\star}$ as follows.
\begin{lemma}\label{lemma:Qi*-ineq} For $Q_{i,\star}$ defined in Lemma \ref{lemma:Qij}, it holds that
    \begin{equation*}
        Q_{i,\star}\leqslant  f_j-f_\star -\frac{1}{2L}\Vert \mathbf g_j\Vert^2+\frac1L\langle \mathbf
 g_i,\mathbf g_j-L(\mathbf x_j-\mathbf x_\star)\rangle
    \end{equation*}
for any indices $i,j\in \{0,1,\dotsc,n\}$ and $(f,\mathbf x_0)\in\mathcal C^{1,1}_L\times \mathbb R^d$.
\end{lemma}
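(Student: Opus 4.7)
The plan is to show that the claimed inequality is equivalent to the already-established inequality $Q_{i,j}\leq 0$ from \cref{lemma:Qij}, by a direct algebraic rewriting of the right-hand side.

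First, I would simplify the inner-product term on the right-hand side. Since
\begin{equation*}
    \tfrac{1}{L}\langle \mathbf g_i,\mathbf g_j-L(\mathbf x_j-\mathbf x_*)\rangle - \tfrac{1}{2L}\lVert\mathbf g_j\rVert^2
    = \tfrac{1}{2L}\bigl(2\langle\mathbf g_i,\mathbf g_j\rangle-\lVert\mathbf g_j\rVert^2\bigr) - \langle \mathbf g_i,\mathbf x_j-\mathbf x_*\rangle,
\end{equation*}
and the polarization identity gives $2\langle\mathbf g_i,\mathbf g_j\rangle-\lVert\mathbf g_j\rVert^2 = \lVert\mathbf g_i\rVert^2 - \lVert\mathbf g_i-\mathbf g_j\rVert^2$, the target inequality becomes
\begin{equation*}
    Q_{i,*} \leq f_j - f_* + \tfrac{1}{2L}\bigl(\lVert\mathbf g_i\rVert^2-\lVert\mathbf g_i-\mathbf g_j\rVert^2\bigr) - \langle\mathbf g_i,\mathbf x_j-\mathbf x_*\rangle.
\end{equation*}

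Next, I would substitute the definition $Q_{i,*} = f_i - f_* - \langle\mathbf g_i,\mathbf x_i-\mathbf x_*\rangle + \tfrac{1}{2L}\lVert\mathbf g_i\rVert^2$ and subtract it from the right-hand side. The $f_*$ terms, the $\tfrac{1}{2L}\lVert\mathbf g_i\rVert^2$ terms, and the $\langle\mathbf g_i,\mathbf x_*\rangle$ terms cancel, leaving
\begin{equation*}
    \text{RHS} - Q_{i,*} = -\bigl(f_i - f_j + \langle\mathbf g_i,\mathbf x_j-\mathbf x_i\rangle + \tfrac{1}{2L}\lVert\mathbf g_i-\mathbf g_j\rVert^2\bigr) = -Q_{i,j}.
\end{equation*}
Since \cref{lemma:Qij} yields $Q_{i,j}\leq 0$, we get $\text{RHS}-Q_{i,*}\geq 0$, which is the desired inequality.

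There is no real obstacle in this proof; it is a one-step consequence of \cref{lemma:Qij} combined with a polarization identity. The only care needed is in tracking the indices and signs when rewriting the inner-product and squared-norm terms to match the form of $Q_{i,j}$.
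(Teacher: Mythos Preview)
Your proof is correct and follows essentially the same approach as the paper: both reduce the claim to the inequality $Q_{i,j}\leq 0$ from \cref{lemma:Qij} via the identity $\text{RHS}-Q_{i,*}=-Q_{i,j}$, using the same polarization step $\lVert\mathbf g_i\rVert^2-\lVert\mathbf g_i-\mathbf g_j\rVert^2=2\langle\mathbf g_i,\mathbf g_j\rangle-\lVert\mathbf g_j\rVert^2$. The only difference is cosmetic: the paper writes it as a one-line chain starting from the definition of $Q_{i,*}$ and applying $Q_{i,j}\leq 0$, whereas you compute the difference explicitly.
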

\begin{proof}
According to~\cref{lemma:Qij}, we have
$
Q_{i,\star}\overset{\eqref{Qij2}}{=}
f_i -f_\star-\langle \mathbf g_i,\mathbf x_i-\mathbf x_\star\rangle +\frac{1}{2L}\Vert\mathbf g_i\Vert^2\\
\overset{~\eqref{Qij}}{\leqslant}
( f_j-f_\star -\frac{1}{2L}\Vert \mathbf g_j\Vert^2)+\frac 1L\langle\mathbf g_i, \mathbf g_j-L(\mathbf x_j-\mathbf x_\star)\rangle.
$
\end{proof}

\subsection{New SSs}
Previous elementary methods for the convergence analysis of gradient descent often rely on linear combinations of $Q_{i,j}$s, as defined in Lemma \ref{Qij}~\cite{altschuler2018greed, silver1, silver2, grimmer2024acceleratedobjective}.
However, these methods frequently involve cumbersome summations that may require  symbolic computation, and their convergence rates are restricted to iterations $n=2^l-1,{\cbl~l\in\N}$.
Building upon the works of~\cite{altschuler2018greed, silver1, silver2, grimmer2024acceleratedobjective}, we introduce in this section a vectorized computation technique to streamline these calculations, which we term ``dominance''. We also introduce two classes of SSs related to the dominance property, which will be fundamental in the subsequent analysis.

\begin{definition}[Dominance]\label{def-control}
We say an SS {$h = [h_0,h_1,\dotsc,h_{n-1}]^T\in\mathbb R_+^n$} and a coefficient vector $u\in\mathbb R_+^{n+1}$ \emph{dominate}  $M\ge0$ if
\begin{equation*}
    \frac L2\Vert\mathbf x_0-\mathbf x_\star\Vert^2+\langle u,v\rangle - (\mathbf 1^Tu)(f_n-f_\star)\geqslant M,~\forall (f,\mathbf x_0)\in\mathcal C^{1,1}_L\times \mathbb R^d,
\end{equation*}
where $\mathbf 1$ denotes the vector of all ones, and $v$ is defined in~\eqref{eq:defv}.
\end{definition}

\begin{definition}[Dominant SS]\label{def:dominant} We say an SS $h\in\mathbb R_+^n$, is \emph{dominant} if there exists a vector $u\in\mathbb R_+^{n+1}$ such that $\mathbf 1^Tu =2(\mathbf 1^Th)+1$, and that $h$ and $u$ dominate $\frac L2\Vert \mathbf x_0-\mathbf x_\star -L^{-1}Gu\Vert^2$, recalling that
$G = [\mathbf g_0,\dotsc,\mathbf g_n]\in\mathbb R^{d\times (n+1)}$.
\end{definition}

{The next theorem presents a worst-case bound for dominant stepsize schedules, which involves the Huber function as the worst-case function. We remark that the Huber and quadratic functions are often observed as worst-case functions in the PEP literature, e.g., for the constant stepsize schedule \cite{drori2014performance} and for more general first-order methods \cite{taylor2017smooth, kim2017ogm}.}

\begin{theorem}[Upper bound of dominant SS]\label{them-dominate} If an SS $h\in\mathbb R_+^n$ is dominant, then gradient descent~\eqref{eq:gd} with $h$ has
\begin{equation}\label{eq:boundfordom}
    f_n-f_\star\leqslant \frac{1}{2(\mathbf 1^Th)+1}\cdot \frac L2\Vert \mathbf x_0-\mathbf x_\star\Vert^2.
\end{equation}
The bound attains equality when  $\Vert\mathbf x_0 \Vert = 1$ 
and $f$ is the Huber function:
\begin{equation}\label{eq:huber1}
    f(\mathbf x)= \left\{\begin{array}{ll}
    \frac Lw\Vert \mathbf x \Vert-\frac{L}{2w^2}, &  \Vert\mathbf x \Vert\geqslant \frac1w,
    \\ \frac L2\Vert\mathbf x \Vert^2, & \Vert \mathbf x \Vert<\frac1w,\end{array}\right.
\end{equation}
where $w =  2(\mathbf 1^Th)+1$.
\end{theorem}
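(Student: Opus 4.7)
The plan is to derive the upper bound directly by unwinding \cref{def:dominant}, and then verify the tightness claim by running gradient descent on the Huber function explicitly.

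For the upper bound, since $h$ is dominant there exists $u\in\R_+^{n+1}$ with $\one^T u = 2(\one^T h) + 1$ such that
\[
\frac{L}{2}\Vert \x_0 - \x_*\Vert^2 + \langle u,v\rangle - (\one^T u)(f_n - f_*) \;\geq\; \frac{L}{2}\bigl\Vert \x_0 - \x_* - L^{-1}Gu\bigr\Vert^2 \;\geq\; 0.
\]
\cref{lemma:Qij} gives $Q_{i,*}\leq 0$, so $v\leq 0$ componentwise; combined with $u\geq 0$ this forces $\langle u,v\rangle\leq 0$. Consequently $(\one^T u)(f_n - f_*)\leq \frac{L}{2}\Vert \x_0-\x_*\Vert^2$, and dividing by $\one^T u = 2(\one^T h)+1$ yields \eqref{eq:boundfordom}.

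For the equality statement, I would fix any $\x_0$ with $\Vert\x_0\Vert=1$ and take $f$ to be the Huber function \eqref{eq:huber1}, whose minimizer is $\x_* = \mathbf 0$ with $f_* = 0$. Since $w = 2(\one^T h) + 1 \geq 1$, the initial point lies in the linear region, where $\nabla f(\x) = (L/w)\x/\Vert\x\Vert$. I would then show by induction that every iterate is a positive scalar multiple of $\x_0$: if $\x_i = c_i\x_0$ with $c_i\geq 1/w$, then $\g_i = (L/w)\x_0$ (constant across iterations) and
\[
\x_{i+1} \;=\; \x_i - (h_i/L)\g_i \;=\; (c_i - h_i/w)\x_0.
\]
The requirement $c_{i+1}\geq 1/w$ reduces to $\sum_{j\leq i}h_j\leq w-1 = 2(\one^T h)$, which is immediate from $h\in\R_+^n$. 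Setting $i=n-1$ yields $c_n = (w+1)/(2w)$, and a short computation using the linear branch of \eqref{eq:huber1} gives $f_n-f_* = L/(2w)$, which matches the right-hand side of \eqref{eq:boundfordom} exactly.

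The upper bound is essentially bookkeeping once \cref{def:dominant} is unpacked, so the main point requiring care is the equality analysis—specifically, the inductive step that keeps every iterate inside the linear regime of the Huber function, where gradients have constant direction and magnitude. It is precisely the normalization $\one^T u = 2(\one^T h)+1$ built into \cref{def:dominant} that makes the constant in the bound sharp; without that identity the Huber construction would leave a gap between $f_n - f_*$ and the claimed bound.
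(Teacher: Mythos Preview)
Your proposal is correct and follows the paper's own proof essentially step for step: unpack \cref{def:dominant}, use $u\geq 0$ and $v\leq 0$ to discard $\langle u,v\rangle$, then divide by $\one^T u = 2(\one^T h)+1$; for tightness, run the Huber iteration and verify by induction that every iterate stays in the linear regime with $\x_i = \bigl(1-\tfrac{1}{w}\sum_{j<i}h_j\bigr)\x_0$, ending at $\Vert\x_n\Vert=(w+1)/(2w)\geq 1/w$ and $f_n=L/(2w)$. Your write-up is in fact slightly more explicit than the paper's about why each intermediate $c_i$ remains at least $1/w$.
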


\begin{proof} Since $h$ is dominant, there exists a vector $u\in\mathbb R_+^{n+1}$ such that $h$ and $u$ dominate $M\triangleq\frac L2\Vert\mathbf x_0 -\mathbf x_\star- L^{-1} Gu\Vert^2$. By definition, it follows that $\mathbf 1^Tu = 2(\mathbf 1^Th)+1$. From~\cref{def-control}, $ \frac L2\Vert\mathbf x_0-\mathbf x_\star\Vert^2+\langle u,v\rangle - (\mathbf 1^Tu)(f_n-f_\star)\geqslant M\geqslant 0$ holds, where $v =  [Q_{0,\star},\dotsc,Q_{n,\star}]^T$. Recalling that $v\leqslant 0$ by Lemma \ref{lemma:Qij},  we have $\frac L2\Vert\mathbf x_0-\mathbf x_\star\Vert^2  - (\mathbf 1^Tu)(f_n-f_\star) \geqslant 0$ as $u\geqslant 0$. Plugging in $\mathbf 1^Tu = 2(\mathbf 1^Th)+1$ yields the desired bound.

Next we show that the bound is attained by the Huber function~\eqref{eq:huber1}. It is easy to verify $f\in\mathcal C^{1,1}_L$~\cite[Theorem 6.60]{beck2017first}, and has minimizer $\mathbf x_\star =\mathbf  0$ and minimum $f_\star=0$. Noting that $\mathbf x_0 $ is a unit vector, by induction we can show that $\mathbf x_i = \mathbf x_0 -\frac1w \sum_{j=0}^{i-1}h_j\mathbf x_0$ for $i=1,2,\dots,n$ and $\mathbf x_n  = \mathbf x_0  - \frac 1w( \mathbf 1^Th) \mathbf x_0 =\frac{w+1}{2w} \mathbf x_0 $. Here we have $\Vert\mathbf x_n \Vert = \frac{w+1}{2w}\geqslant \frac1w$. As a result, the equality in~\eqref{eq:boundfordom} holds because
\begin{equation*}
    f_n-f_\star = f(\mathbf x_n) -0= \frac Lw \cdot \frac{w+1}{2w}-\frac{L}{2w^2} = \frac{L}{2w}=\frac{1}{2(\mathbf 1^Th)+1}\cdot \frac L2\Vert\mathbf x_0-\mathbf x_\star\Vert^2.
\end{equation*}
\end{proof}


{We remark that there is a relationship between the definition of dominance and the dual SDP formulation of the PEP, rooted in the work of~\cite{drori2014performance,taylor2017smooth} and discussed in detail in~\cite[Chapter 8]{altschuler2018greed}. The dual formulation {\cbll offers a useful perspective} to establish proofs of convergence bounds by finding a set of nonnegative multipliers $\lambda_{i,j}$ and the largest $C$ such that $\frac L2\Vert\mathbf x_0-\mathbf x_\star\Vert^2 - C\cdot (f_n-f_\star)+\sum_{i,j}\lambda_{i,j}Q_{i,j}$  is  a positive semidefinite quadratic form of $\mathbf x_0,\dotsc,\mathbf x_n,\mathbf g_0,\dotsc,\mathbf g_n$, where $i,j\in\{0,1,\dotsc,n,\star\}$.} Our dominant SS further assumes that a positive semidefinite quadratic form has rank one, taking the form $\frac L2\Vert\mathbf x_0-\mathbf x_\star -L^{-1}Gu\Vert^2$, where $G = [\mathbf g_0,\dotsc,\mathbf g_n]$. 
{We note that low-rank structures have been repeatedly observed in the PEP and first-order optimization literature (see, e.g.,~\cite{kim2021ogm-g, kim2016optimized, lie21, kim2021accelerated}). More recently, Grimmer et al.~\cite{grimmer2024strengthened} explicitly conjectured a rank-one structure for the case of optimal constant stepsize schedules.
}

We next propose another class of SSs. 

\begin{definition}[Primitive SS]\label{def:primitive} We say an SS $h\in\mathbb R_+^n$ is \emph{primitive} if $h$ and $u=[h^T,0]^T\in\mathbb R_+^{n+1}$ dominate $\frac L2\Vert\mathbf x_n-\mathbf x_\star\Vert^2 +\frac{C}{2L}\Vert\mathbf g_n\Vert^2$, where $C = (\mathbf 1^Th)\cdot (\mathbf 1^Th+1)$.
\end{definition}

We now demonstrate that primitive SSs belong to the class of dominant SSs.
\begin{proposition}\label{them:primitive-are-dominant}
Suppose $h\in\mathbb R_+^n$   is primitive. Then $h$ is dominant. Particularly, {for ${\bar u} = [h^T,\mathbf 1^Th+1]^T\in\mathbb R_+^{n+1}$, $h$ and $\bar u$ dominate} $\frac L2\Vert\mathbf x_n -\mathbf x_\star-(\mathbf 1^Th+1)L^{-1}\mathbf g_n\Vert^2$.
\end{proposition}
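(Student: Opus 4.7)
The plan is to show that the vector $\bar u = [h^T,\,\mathbf 1^Th+1]^T$ satisfies both conditions of \cref{def:dominant}: that $\mathbf 1^T\bar u = 2(\mathbf 1^Th)+1$ (immediate from setting $s\coloneqq \mathbf 1^Th$, so $\mathbf 1^T\bar u = s+(s+1)=2s+1$), and that $h$ and $\bar u$ dominate $\frac L2\|\mathbf x_0-\mathbf x_*-L^{-1}G\bar u\|^2$. The first key observation is that the bound stated in the proposition, written in terms of $\mathbf x_n$, actually coincides with the bound required by \cref{def:dominant}. Indeed, the gradient descent recursion \eqref{eq:gd} telescopes to $\sum_{i=0}^{n-1}h_i\mathbf g_i = L(\mathbf x_0-\mathbf x_n)$, so
\[
L^{-1}G\bar u \;=\; L^{-1}\sum_{i=0}^{n-1}h_i\mathbf g_i + (s+1)L^{-1}\mathbf g_n \;=\; (\mathbf x_0-\mathbf x_n) + (s+1)L^{-1}\mathbf g_n,
\]
and hence $\mathbf x_0-\mathbf x_* - L^{-1}G\bar u = \mathbf x_n-\mathbf x_*-(s+1)L^{-1}\mathbf g_n$. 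So the target dominance and the statement in the proposition are the same inequality.

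The second step is to upgrade the primitive inequality to the dominant one by injecting one extra term. By primitivity of $h$ (\cref{def:primitive}), we have
\[
\tfrac L2\|\mathbf x_0-\mathbf x_*\|^2 + \langle [h^T,0]^T,v\rangle - s(f_n-f_*) \;\geq\; \tfrac L2\|\mathbf x_n-\mathbf x_*\|^2 + \tfrac{C}{2L}\|\mathbf g_n\|^2,
\]
with $C=s(s+1)$. Passing from $[h^T,0]^T$ to $\bar u$ adds $(s+1)Q_{n,*}$ to the left side, and passing from coefficient $s$ to $2s+1$ subtracts an extra $(s+1)(f_n-f_*)$. Using the definition of $Q_{n,*}$ from \eqref{Qij2},
\[
(s+1)Q_{n,*} - (s+1)(f_n-f_*) \;=\; -(s+1)\langle \mathbf g_n,\mathbf x_n-\mathbf x_*\rangle + \tfrac{s+1}{2L}\|\mathbf g_n\|^2,
\]
so the left-hand side of the candidate dominance inequality becomes at least
\[
\tfrac L2\|\mathbf x_n-\mathbf x_*\|^2 - (s+1)\langle \mathbf g_n,\mathbf x_n-\mathbf x_*\rangle + \tfrac{C+(s+1)}{2L}\|\mathbf g_n\|^2.
\]

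The final step is the algebraic identity that makes the choice of $C$ work: $C+(s+1) = s(s+1)+(s+1) = (s+1)^2$. Consequently the displayed expression is exactly the expansion of $\tfrac L2\|\mathbf x_n-\mathbf x_*-(s+1)L^{-1}\mathbf g_n\|^2$, which by the first step equals $\tfrac L2\|\mathbf x_0-\mathbf x_*-L^{-1}G\bar u\|^2$. Thus $h$ and $\bar u$ dominate this quantity, establishing that $h$ is dominant. I expect no serious obstacle here: everything is a short linear combination of definitions, and the one nontrivial moment is recognizing why $C=s(s+1)$ is the unique choice that lets $C+(s+1)$ collapse to the square $(s+1)^2$ required to complete the square in $\mathbf g_n$.
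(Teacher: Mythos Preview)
Your proposal is correct and follows essentially the same approach as the paper's proof: both use the primitive inequality with $u=[h^T,0]^T$, add the extra term $(s+1)(Q_{n,*}-(f_n-f_*))$ to pass to $\bar u$, complete the square via $C+(s+1)=(s+1)^2$, and identify the resulting expression with $\frac L2\|\mathbf x_0-\mathbf x_*-L^{-1}G\bar u\|^2$ using the gradient descent telescoping identity. The only cosmetic difference is the order---you establish the telescoping identity first, while the paper does it last.
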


\begin{proof} Let $u=[h^T,0]^T\in\mathbb R^{n+1}$. By~\cref{def:primitive}, we have
\begin{equation}\label{eq:prieq}
\frac L2\Vert\mathbf x_0-\mathbf x_\star\Vert^2+\langle u,v\rangle - (\mathbf 1^Tu)(f_n-f_\star)\geqslant \frac L2\Vert\mathbf x_n-\mathbf x_\star\Vert^2 +\frac{C}{2L}\Vert\mathbf g_n\Vert^2,
\end{equation}
where $v$ is defined in~\eqref{eq:defv} and $C = (\mathbf 1^Th)\cdot(\mathbf 1^Th+1)$. 
Let $r=\mathbf 1^Th$ and ${\bar u} = [h^T,r+1]^T$, so $C = r(r+1)$ and $\mathbf 1^T\bar u=\mathbf 1^Tu+r+1$. Then we have
\begin{equation}\label{eq:primitive}\begin{aligned}
  &\quad\ \frac L2\Vert\mathbf x_0-\mathbf x_\star\Vert^2+\langle {\bar u},v\rangle - (\mathbf 1^T{\bar u})(f_n - f_\star)\\
  &\!\!\!\stackrel{\eqref{eq:prieq}}{\geqslant} \left.\frac L2\Vert\mathbf x_n-\mathbf x_\star\Vert^2 +\frac{r(r+1)}{2L}\Vert\mathbf g_n\Vert^2  \right.+(r+1)(Q_{n,\star} - (f_n-f_\star))
    \\
        & \!\!\!\stackrel{(\ref{Qij2})}{=} \left.\frac L2\Vert\mathbf x_n-\mathbf x_\star\Vert^2 +\frac{r(r+1)}{2L}\Vert\mathbf g_n\Vert^2  \right.+(r+1)\left(-\langle \mathbf g_n,\mathbf x_n-\mathbf x_\star\rangle+\frac{1}{2L}\Vert\mathbf g_n\Vert^2\right)
    \\  &=  \frac L2\Vert   \mathbf x_n-\mathbf x_\star\Vert^2-(r+1)\langle \mathbf g_n,\mathbf x_n-\mathbf x_\star\rangle+\frac{(r+1)^2}{2L}\Vert \mathbf g_n\Vert^2
    \\ &=  \frac L2\Vert \mathbf x_n-\mathbf x_\star-(r+1)L^{-1}\mathbf g_n\Vert^2 \geqslant 0.
\end{aligned}
\end{equation}
Inequality~\eqref{eq:primitive} implies that $h$ and $\bar u$ dominate $\frac L2\Vert \mathbf x_n-\mathbf x_\star - (\mathbf 1^Th+1)L^{-1}\mathbf g_n\Vert^2$. We now show that this implies that $h$ is {dominant}. Recalling $G=[\mathbf g_0,\dots,\mathbf g_n]$, we have
\begin{equation*}
    \mathbf x_0  -L^{-1} G\bar u = \mathbf x_0 - \frac{h_0}{L}\mathbf g_0 -\dots - \frac{h_{n-1}}{L}\mathbf g_{n-1} - \frac{\mathbf 1^Th+1}{L}\mathbf g_n = \mathbf x_n  - (\mathbf 1^Th+1)L^{-1}\mathbf g_n,
\end{equation*}
where the last equation follows that $\x_n=\mathbf x_0 - (h_0/L)\mathbf g_0 - \dots - (h_{n-1}/L)\mathbf g_{n-1}$ from the definition of the gradient descent method in~\eqref{eq:gd}. So~\eqref{eq:primitive} indicates $h$ and $\bar u$ dominate $\frac L2\Vert\mathbf x_0-\mathbf x_\star - L^{-1}G\bar u\Vert^2$. Finally, noting that $\mathbf 1^T\bar u= 2(\mathbf 1^Th)+1$, we conclude $h$ is dominant.
\end{proof}

Since every primitive schedule is dominant, we obtain an immediate corollary of~\cref{them-dominate} and~\cref{them:primitive-are-dominant}.

\begin{corollary}\label{coro:bound-primitive} If an SS $h\in\mathbb R_+^n$ is primitive, then gradient descent~\eqref{eq:gd} with $h$ has
 $   f_n-f_\star\leqslant \frac{1}{2(\mathbf 1^Th)+1}\cdot \frac L2\Vert\mathbf x_0-\mathbf x_\star\Vert^2.$
The equality is attained  by the Huber function~\eqref{eq:huber1}.
\end{corollary}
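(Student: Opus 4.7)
The plan is to simply chain the two preceding results. First, by \cref{them:primitive-are-dominant}, any primitive $h \in \mathbb R_+^n$ is in particular dominant, with the explicit witness $\bar u = [h^T, \mathbf 1^T h + 1]^T$ satisfying $\mathbf 1^T \bar u = 2(\mathbf 1^T h) + 1$. Second, applying \cref{them-dominate} to $h$ (now viewed as dominant) produces exactly the bound
\begin{equation*}
    f_n - f_* \leqslant \frac{1}{2(\mathbf 1^T h) + 1} \cdot \frac{L}{2}\Vert \mathbf x_0 - \mathbf x_*\Vert^2,
\end{equation*}
since the constant on the right depends only on $\mathbf 1^T h$, which is unaffected by the passage from primitive to dominant.

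For the equality claim, \cref{them-dominate} already exhibits the Huber function \eqref{eq:huber1} with parameter $w = 2(\mathbf 1^T h) + 1$ as an instance attaining equality for any dominant stepsize schedule of length $n$. Because $w$ depends solely on $\mathbf 1^T h$ (the augmentation in $\bar u$ changes the last coordinate but leaves the sum $\mathbf 1^T h$ unchanged, by design), the identical Huber instance attains equality here as well, with $\Vert \mathbf x_0 \Vert = 1$.

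There is essentially no obstacle: the corollary is framed precisely so that the substantive work lives in \cref{them-dominate} and \cref{them:primitive-are-dominant}. The only bookkeeping point to verify is that both theorems use the same quantity $w = 2(\mathbf 1^T h) + 1$ for the bound constant and for the equality-attaining Huber function, which follows directly from the definitions. The proof should therefore consist of two short sentences citing the two results in sequence.
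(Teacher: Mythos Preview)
Your proposal is correct and matches the paper's approach exactly: the paper presents this corollary as an immediate consequence of \cref{them-dominate} and \cref{them:primitive-are-dominant}, without even writing out a formal proof. Your two-sentence chaining of these results is precisely what is intended.
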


Let us now present some examples of SSs that fall within these categories. The one-step schedule $[\sqrt 2]^T$ is primitive, while the schedule $[3/2]^T$ is dominant; the proofs are provided in Examples \ref{ex:2} and \ref{ex:3} in the next section. In particular, we emphasize that the simplest and most trivial stepsize, the empty stepsize $h= [~]^T\in\mathbb R^0$, is primitive, which forms the basis of our concatenation technique.

\begin{example}\label{example-zero} The zero-iteration stepsize $h=[~]^T\in\mathbb R^0$ is primitive, and thus it is dominant.
To see this, let $u = [h^T,0]^T=[0]^T\in\mathbb R^1$. Following Definitions \ref{def-control} and \ref{def:primitive},  it is clear that
\[\frac L2\Vert\mathbf x_0-\mathbf x_\star\Vert^2+\langle u,v\rangle - (\mathbf 1^Tu)(f_n-f_\star)= \frac L2\Vert\mathbf x_0-\mathbf x_\star\Vert^2+\frac{0(0+1)}{2L}\Vert\mathbf g_0\Vert^2,\] so $h = [~]^T$ is primitive.~\cref{them:primitive-are-dominant} ensures that $h$ is dominant.
\end{example}

\section{Concatenation Techniques}\label{sect:concat}
In this section, we show that special dominant and primitive SSs can be constructed via concatenation of {two specific SSs}.

\begin{theorem}[Concatenation of two primitive SSs (\texttt{ConPP})]\label{them-CPP}
Suppose $h_a\in\mathbb R_+^n$ and $h_b\in\mathbb R_+^{m-n-1}$ are both primitive SSs, where $m\geqslant n+1$.
Then the SS $h_c =[h_a^T,\alpha,h_b^T]^T\in\mathbb R_+^{m}$ is also primitive, where $\alpha =\varphi(\mathbf 1^Th_a,\mathbf 1^Th_b )$ and  $\varphi(\cdot,\cdot)$ is defined by
\begin{equation}\label{eq:def-varphi}
\varphi(x,y) =\frac{-x-y+\sqrt{(x+y+2)^2+4(x+1)(y+1)}}{2}.
\end{equation}
\end{theorem}
Following the above theorem, we denote by $h_c = \Call{ConPP}{h_a,h_b}$ a procedure of \texttt{ConPP} with $h_a$ and $h_b$ as input and $h_c$ as output.

\begin{theorem}[Concatenation of primitive and dominant SSs (\texttt{ConPD})] \label{them-CPD}
Suppose $h_a\in\mathbb R_+^n$ is a primitive SS and $h_d\in\mathbb R_+^{m-n-1}$ is a dominant SS, where $m\geqslant n+1$.
Then the SS $h_e =[h_a^T,\beta,h_d^T]^T\in\mathbb R_+^{m}$ is also dominant, where $\beta =\psi (\mathbf 1^Th_a,\mathbf 1^Th_d)$ and  $\psi(\cdot,\cdot)$ is defined by
\begin{equation}\label{eq:def-psi}
\psi(x,y) =\frac{3-2y+\sqrt{(2y+1)(2y+8x+9)}}{4}.
\end{equation}
\end{theorem}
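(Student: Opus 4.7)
My plan is to construct an explicit nonnegative coefficient vector $u_e$ witnessing the dominance of $h_e$, by concatenating the multiplier vectors from the primitiveness of $h_a$ and the dominance of $h_d$ with a carefully chosen middle entry, and then closing the residual gap with the interpolation inequalities of \cref{lemma:Qij} and \cref{lemma:Qi*-ineq}.

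I first record the two base inequalities. Let $x=\mathbf 1^Th_a$ and $y=\mathbf 1^Th_d$. Applying \cref{them:primitive-are-dominant} to $h_a$ yields
\[
\mathcal I_a:\ \tfrac{L}{2}\Vert\mathbf x_0-\mathbf x_*\Vert^2+\langle\bar u_a,v_a\rangle-(2x+1)(f_n-f_*)\geq \tfrac{L}{2}\Vert\mathbf x_n-\mathbf x_*-(x+1)L^{-1}\mathbf g_n\Vert^2,
\]
with $\bar u_a=[h_a^T,x+1]^T$ and $v_a=[Q_{0,*},\dots,Q_{n,*}]^T$. The dominance of $h_d$, applied with $\mathbf x_{n+1}$ playing the role of the initial iterate, supplies $u_d\in\R_+^{m-n}$ with $\mathbf 1^Tu_d=2y+1$ and
\[
\mathcal I_d:\ \tfrac{L}{2}\Vert\mathbf x_{n+1}-\mathbf x_*\Vert^2+\langle u_d,v_d\rangle-(2y+1)(f_m-f_*)\geq \tfrac{L}{2}\Vert\mathbf x_{n+1}-\mathbf x_*-L^{-1}G_du_d\Vert^2,
\]
where $v_d=[Q_{n+1,*},\dots,Q_{m,*}]^T$ and $G_d=[\mathbf g_{n+1},\dots,\mathbf g_m]$. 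I take the candidate $u_e=[h_a^T,\,x+2\beta,\,u_d^T]^T\in\R_+^{m+1}$; it satisfies $\mathbf 1^Tu_e=2(x+\beta+y)+1=2(\mathbf 1^Th_e)+1$ as required, and a telescoping calculation using $\mathbf x_{n+1}=\mathbf x_n-(\beta/L)\mathbf g_n$ gives $\mathbf x_0-\mathbf x_*-L^{-1}Gu_e=\mathbf x_{n+1}-\mathbf x_*-L^{-1}G_du_d-(x+\beta)L^{-1}\mathbf g_n$, so the dominance target becomes $\tfrac{L}{2}\Vert\mathbf x_{n+1}-\mathbf x_*-L^{-1}G_du_d-(x+\beta)L^{-1}\mathbf g_n\Vert^2$.

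The core step is to add $\mathcal I_a$ and $\mathcal I_d$ and eliminate the stray $\tfrac{L}{2}\Vert\mathbf x_{n+1}-\mathbf x_*\Vert^2$ via the identity
\[
\tfrac{L}{2}\Vert\mathbf x_{n+1}-\mathbf x_*\Vert^2=\tfrac{L}{2}\Vert\mathbf x_n-\mathbf x_*\Vert^2-\beta(f_n-f_*)+\beta Q_{n,*}+\tfrac{\beta^2-\beta}{2L}\Vert\mathbf g_n\Vert^2,
\]
a direct consequence of $\mathbf x_{n+1}=\mathbf x_n-(\beta/L)\mathbf g_n$ and the definition of $Q_{n,*}$. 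Expanding $\tfrac{L}{2}\Vert\mathbf x_n-\mathbf x_*-(x+1)L^{-1}\mathbf g_n\Vert^2$ on the RHS as well as the target square, using the definition of $Q_{n,*}$ once more to absorb the leftover $(1-\beta)Q_{n,*}$ contribution, and cancelling the shared $\tfrac{L}{2}\Vert\mathbf x_n-\mathbf x_*\Vert^2$, the remaining discrepancy collapses into
\[
(x+\beta)\Bigl[\tfrac{1-2\beta}{L}\Vert\mathbf g_n\Vert^2-\tfrac{1}{L}\langle\mathbf g_n,G_du_d\rangle+2(f_n-f_m)\Bigr].
\]
I plan to show this is nonnegative by adding a nonnegative multiple of $Q_{m,n}\le 0$ (lower-bounding $f_n-f_m$ by $\langle\mathbf g_m,\mathbf x_n-\mathbf x_m\rangle+\tfrac{1}{2L}\Vert\mathbf g_m-\mathbf g_n\Vert^2$), together with instances of \cref{lemma:Qi*-ineq} to control the cross-term $\langle\mathbf g_n,G_du_d\rangle$. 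Matching the resulting coefficient of $\Vert\mathbf g_n\Vert^2$ yields exactly the quadratic $2\beta^2+(2y-3)\beta-(2xy+4y+x)=0$, whose discriminant is $(2y+1)(2y+8x+9)$ and whose unique nonnegative root is $\beta=\psi(x,y)$ given by (\ref{eq:def-psi}).

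The main obstacle is the bookkeeping in this final step: tracking every contribution proportional to $\Vert\mathbf g_n\Vert^2$, $\langle\mathbf g_n,\mathbf g_i\rangle$ for $i>n$, $(f_n-f_m)$, and $Q_{n,*}$ after all substitutions, and verifying that they admit a nonneg.\ combination of the available interpolation inequalities whose residual is nonnegative precisely when $\beta=\psi(x,y)$. The extra $\Vert\mathbf g_n\Vert^2$ contribution carried by the primitive bound of $h_a$, absent from the dominant bound of $h_d$, is what causes $\psi$ to be asymmetric in $x$ and $y$ and distinct from $\varphi$ of \cref{them-CTP}.
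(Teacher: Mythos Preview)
Your candidate certificate $u_e=[h_a^T,\,x+2\beta,\,u_d^T]^T$ is not valid, and the plan to repair the residual with $Q_{m,n}$ and \cref{lemma:Qi*-ineq} cannot succeed. Take the simplest instance $h_a=h_d=[~]^T$, so $x=y=0$, $\beta=\psi(0,0)=3/2$, $u_d=[1]$ and hence your $u_e=[3,1]^T$. For the pure quadratic $f(\mathbf x)=\tfrac{L}{2}\Vert\mathbf x\Vert^2$ (so $\mathbf x_*=0$) one has $\mathbf x_1=-\tfrac12\mathbf x_0$, every $Q_{i,j}$ and $Q_{i,*}$ vanishes identically, and direct computation gives
\[
\tfrac{L}{2}\Vert\mathbf x_0\Vert^2+\langle u_e,v_e\rangle-4(f_1-f_*)=0,\qquad
\tfrac{L}{2}\Vert\mathbf x_0-L^{-1}Gu_e\Vert^2=\tfrac{9L}{8}\Vert\mathbf x_0\Vert^2.
\]
Thus the dominance inequality fails by $\tfrac{9L}{8}\Vert\mathbf x_0\Vert^2$. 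Equivalently, your residual bracket evaluates to $-\tfrac{3L}{4}\Vert\mathbf x_0\Vert^2<0$ here; since all interpolation inequalities are equalities for the quadratic, no nonnegative combination of $-Q_{m,n}$ or the bounds of \cref{lemma:Qi*-ineq} can close the gap.

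The paper's proof avoids this by \emph{rescaling} the $u_d$ block: it sets $u_e=[h_a^T,\gamma,(\lambda_1-\lambda_2)u_d^T]^T$ with $\gamma=x+2$ and a factor $\lambda_1-\lambda_2>1$ chosen (via \cref{lemma:CPDtight}) so that the combined expression is an exact square. In the example above this gives $u_e=[2,2]^T$, for which the target square is $0$ and the inequality holds. The key idea you are missing is precisely this scaling: keeping the $u_d$ block with coefficient $1$ and pushing all the slack into the middle entry cannot produce the required rank-one certificate.
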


We denote by $h_e=\Call{ConPD}{h_a,h_d}$ a procedure of \texttt{ConPD} with $h_a$ and $h_d$ as input and $h_e$ as output.

In brief, \texttt{ConPP} offers a powerful tool to construct primitive SSs recursively. Thereafter, such primitive SSs can be fed into \texttt{ConPD}, yielding a class of dominant SSs.  We remark that in~\cref{them-CPP} (resp. \ref{them-CPD}), $h_a$ and/or $h_b$ ($h_d$) can be empty.
We provide two examples to  better demonstrate the applications of the theorems.

\begin{example}\label{ex:2}
 The stepsize $h= [\sqrt 2]^T\in\mathbb R^1$ is primitive, so gradient descent~\eqref{eq:gd} with $h$ has bound $f_1-f_\star\leqslant \frac{1}{2\sqrt 2+1}\cdot \frac L2\Vert\mathbf x_0-\mathbf x_\star\Vert^2$.
 To see this, recall in Example \ref{example-zero} we have shown that $[~]^T\in\mathbb R^0$ is primitive. Applying~\CPP \  on  $h_a = h_b=[~]^T$, the intermediate $\alpha$ is given by $\alpha = \varphi(0,0)= \sqrt 2.$
Hence $h_c = \Call{ConPP}{h_a,h_b}=[h_a^T,\alpha,h_b^T]^T = [\sqrt 2]^T$ is primitive, and~\cref{coro:bound-primitive} gives the desired bound.
\end{example}

\begin{example}\label{ex:3}
The stepsize $h=[3/2]^T\in\mathbb R^1$ is dominant, so gradient
 descent~\eqref{eq:gd} with $h$ has bound $f_1-f_\star\leqslant \frac14\cdot \frac L2\Vert\mathbf x_0-\mathbf x_\star\Vert^2$. 
To see this, recall in Example \ref{example-zero}, $[~]^T\in\mathbb R^0$ is primitive and dominant. Taking $h_a = h_d=[~]^T$ in \CPD, the intermediate $\beta$ is given by $\beta =\psi (0,0) = \frac{3}{2}.$
Hence $h_e=\Call{ConPD}{h_a,h_d}=[h_a^T,\beta,h_d^T]^T = [3/2]^T$ is dominant, and the desired bound follows from~\cref{them-dominate}.
\end{example}

\subsection{Proof of~\cref{them-CPP}}

To begin, we have the following lemma that shows sufficient decrease of objective values for multiple steps associated with a primitive SS.
\begin{lemma}[Sufficient decrease for multiple steps]\label{lemma:sd} Suppose $h_a\in\mathbb R^n$ is any SS (not necessarily primitive), and $h_b\in\mathbb R_+^{m-n-1}$ is a primitive SS, where $m\geqslant n+1$. For any $\alpha \in [1,\mathbf 1^Th_b+2)$, the concatenated SS $h_c = [h_a^T,\alpha,h_b^T]^T\in\mathbb R^m$ satisfies that
\begin{equation}\label{eq:sd}
f_n - f_m\geqslant \frac{\mathbf 1^Th_b+3\alpha-2\alpha^2}{\mathbf 1^Th_b+2-\alpha}\cdot \frac{1}{2L}\Vert\mathbf g_n\Vert^2+ \frac{2(\mathbf 1^Th_b)^2+3(\mathbf 1^Th_b)+\alpha}{\mathbf 1^Th_b+2-\alpha}\cdot \frac{1}{2L}\Vert\mathbf g_m\Vert^2.
\end{equation}
\end{lemma}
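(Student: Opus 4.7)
I would derive the inequality as a nonnegative linear combination of the primitive property of $h_b$ (applied to the sub-trajectory $\mathbf{x}_{n+1}, \ldots, \mathbf{x}_m$) together with interpolation inequalities $Q_{i,j} \leq 0$ from \cref{lemma:Qij}. The schedule $h_a$ plays no role, since the bound only concerns the trajectory from $\mathbf{x}_n$ onward.

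First, I would apply the primitive property of $h_b$ to the sub-trajectory starting at $\mathbf{x}_{n+1}$ (valid because \cref{def:primitive} is universal in the initial point). With $r = \mathbf{1}^T h_b$, this yields
\[
\tfrac{L}{2}\|\mathbf{x}_{n+1} - \mathbf{x}_*\|^2 + \sum_{j=n+1}^{m-1}(h_b)_{j-n-1}\,Q_{j,*} - r(f_m - f_*) \;\geq\; \tfrac{L}{2}\|\mathbf{x}_m - \mathbf{x}_*\|^2 + \tfrac{r(r+1)}{2L}\|\mathbf{g}_m\|^2.
\]
Substituting $\mathbf{x}_{n+1} = \mathbf{x}_n - (\alpha/L)\mathbf{g}_n$ and using the gradient-descent identity $\sum_{j=n+1}^{m-1}(h_b)_{j-n-1}\mathbf{g}_j = L(\mathbf{x}_{n+1} - \mathbf{x}_m)$, one checks that every $f_*$-coefficient and every $\mathbf{x}_*$-coefficient cancels identically, so that this becomes an inequality $E \geq 0$ purely in the ``physical'' variables $f_j, \mathbf{x}_j, \mathbf{g}_j$ for $j \in \{n, n+1, \ldots, m\}$.

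Second, I would add a nonnegative combination of $-Q_{i,j}$ with indices $i, j \in \{n, n+1, \ldots, m\}$ only; any $Q$ involving $\ast$ would reintroduce $\mathbf{x}_*$ dependence. The guiding multipliers are $(h_b)_{j-n-1}$ on $-Q_{j,n}$ for each intermediate $j$ (which cancel the $f_j$-contribution of the primitive sum and shift its mass onto $f_n$ with coefficient $+r$), together with $-Q_{m,n}$ plus $(\alpha-1)(-Q_{n,m})$ at the endpoints (chosen so that the combined coefficient of $f_n - f_m$ becomes $r + (2-\alpha) = r + 2 - \alpha$). The factor $\alpha - 1 \geq 0$ is permissible by the hypothesis $\alpha \geq 1$, and the base case $h_b = [\,]$ (where $E \equiv 0$) already verifies the $r = 0$ instance: there the pair $-Q_{m,n}$ and $(\alpha-1)(-Q_{n,m})$ produces the stated bound through an explicit cancellation of the $\langle \mathbf{g}_n, \mathbf{g}_m\rangle$ term.

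The main obstacle will be verifying that, with these multipliers, all residual cross-terms---namely $\langle \mathbf{g}_i, \mathbf{g}_j\rangle$, $\langle \mathbf{g}_i, \mathbf{x}_j\rangle$, and the intermediate $\|\mathbf{g}_j\|^2$'s---cancel exactly. The key enabling identity is the gradient-descent recursion $\mathbf{x}_m - \mathbf{x}_j = -\tfrac{1}{L}\sum_{k=j-n}^{m-n-2}(h_b)_k\,\mathbf{g}_{n+1+k}$, which rewrites inner products with intermediate $\mathbf{x}_j$ purely in terms of inner products of gradients. Granting this cancellation, the resulting inequality reads $(r + 2 - \alpha)(f_n - f_m) \geq \tfrac{r + 3\alpha - 2\alpha^2}{2L}\|\mathbf{g}_n\|^2 + \tfrac{2r^2 + 3r + \alpha}{2L}\|\mathbf{g}_m\|^2$, and dividing by $r + 2 - \alpha > 0$ (valid since $\alpha < r + 2$) yields \eqref{eq:sd}.
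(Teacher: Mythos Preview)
Your overall strategy---combining the primitive inequality for $h_b$ (which, as you correctly note, is free of $\mathbf{x}_*$ and $f_*$ after the gradient-descent identity) with a nonnegative combination of $-Q_{i,j}$'s---is exactly the right idea and matches the paper's approach. The base case $h_b=[\,]^T$ also works as you describe. But the specific multipliers you propose for general $r=\mathbf{1}^Th_b>0$ do \emph{not} produce the claimed exact cancellation.

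Concretely, write $\mathbf{d}=\mathbf{x}_{n+1}-\mathbf{x}_m=\tfrac{1}{L}\sum_{j=n+1}^{m-1}(h_b)_{j-n-1}\mathbf{g}_j$. Adding the primitive inequality, $\sum_j(h_b)_{j-n-1}(-Q_{j,n})$, $-Q_{m,n}$, and $(\alpha-1)(-Q_{n,m})$ as you suggest yields
\[
(r+2-\alpha)(f_n-f_m)\;\geq\;\tfrac{r+3\alpha-2\alpha^2}{2L}\|\mathbf{g}_n\|^2+\tfrac{r^2+r+\alpha}{2L}\|\mathbf{g}_m\|^2+\langle\mathbf{d},\mathbf{g}_m\rangle+\tfrac{L}{2}\|\mathbf{d}\|^2,
\]
whereas the target has coefficient $\tfrac{2r^2+3r+\alpha}{2L}$ on $\|\mathbf{g}_m\|^2$ and no $\mathbf{d}$-terms. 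The gap $\langle\mathbf{d},\mathbf{g}_m\rangle+\tfrac{L}{2}\|\mathbf{d}\|^2-\tfrac{r(r+2)}{2L}\|\mathbf{g}_m\|^2$ is not sign-definite, so your derived inequality does not imply \eqref{eq:sd}. The missing ingredient is a companion set of multipliers $(h_b)_{j-n-1}$ on $-Q_{j,m}$ for the intermediate $j$: the paper uses the primitive inequality with multiplier $2$ together with \emph{both} $-Q_{j,n}$ and $-Q_{j,m}$ (packaged via \cref{lemma:Qi*-ineq} with $j=n$ and $j=m$, then summed). This second family is precisely what kills the residual $\langle\mathbf{d},\mathbf{g}_m\rangle$ and $\|\mathbf{d}\|^2$ terms and supplies the missing $\tfrac{r(r+2)}{2L}\|\mathbf{g}_m\|^2$.
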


\cref{lemma:sd} generalizes~\cite[Lemma 5]{teboulle2023elementary}, which shows that an SS $h=[h_a^T,\alpha]^T\in\mathbb R^{n+1}$ with $1\leqslant\alpha<2$ satisfies
\begin{equation}\label{eq:lem5tv}
    f_n - f_{n+1}\geqslant \frac{3\alpha-2\alpha^2}{2-\alpha}\cdot \frac{1}{2L}\Vert\mathbf g_n\Vert^2+ \frac{\alpha}{2-\alpha}\cdot \frac{1}{2L}\Vert\mathbf g_{n+1}\Vert^2.
\end{equation}
In fact, when taking $h_b=[~]^T\in\mathbb R^0$ to be the zero-iteration SS from~\cref{example-zero}, our~\cref{lemma:sd} reduces to~\eqref{eq:lem5tv}.

\begin{proof}[Proof of Lemma \ref{lemma:sd}]

According to~\cref{def:primitive}, since $h_b$ is primitive and it starts from the $(n+1)$-th step of  $h_c$, we have
\begin{equation}\label{eq:sd-1}
 \frac L2\Vert\mathbf x_{n+1}-\mathbf x_\star\Vert^2+\langle u_b,v_b\rangle - (\mathbf 1^Tu_b)(f_m-f_\star)\geqslant \frac{L}{2}\Vert\mathbf x_m-\mathbf x_\star\Vert^2+\frac{C_b}{2L}\Vert\mathbf g_m\Vert^2,
\end{equation}
where $u_b=[h_b^T,0]^T\in\mathbb R_+^{m-n}$, $v_b =[Q_{n+1,\star},\dotsc,Q_{m,\star}]^T\in\mathbb R^{m-n}$ and $C_b = (\mathbf 1^Th_b)\cdot (\mathbf 1^Th_b+1)$.

Letting $j=n$ or $j=m$ and $i=n+1,\dots,m$ in~\cref{lemma:Qi*-ineq}, by $\langle u_b,v_b\rangle=\langle u_b, [Q_{n+1,\star},\dotsc,Q_{m,\star}]^T\rangle$ and $u_b\geqslant 0$ we have that
\begin{equation}\label{eq:sd-2}\left\{
\begin{aligned}
  &   \langle u_b, v_b\rangle \leqslant \langle u_b, \mathbf 1(f_n-f_\star -\frac{1}{2L}\Vert \mathbf g_n\Vert^2)\rangle+\frac1L\langle G_bu_b, \mathbf g_n -L(\mathbf x_n-\mathbf x_\star)\rangle,
    \\ &  \langle u_b, v_b\rangle \leqslant \langle u_b, \mathbf 1(f_m -f_\star-\frac{1}{2L}\Vert \mathbf g_m\Vert^2)\rangle+\frac1L\langle G_bu_b, \mathbf g_m -L (\mathbf x_m-\mathbf x_\star)\rangle,
    \end{aligned}\right.
\end{equation}
where $G_b=[\mathbf g_{n+1},\dotsc,\mathbf g_m]$. Noting that $u_b=[h_b^T,0]^T = [h_{n+1},\dotsc, h_{m-1},0]^T$, we have the identity $\frac1L G_bu_b =\frac{ h_{n+1}}{L}\mathbf g_{n+1}+\dotsc +\frac{h_{m-1}}{L}\mathbf g_{m-1}=\mathbf x_{n+1} - \mathbf x_m$
by the definition of the gradient descent method~\eqref{eq:gd}. Thus, summing up~\eqref{eq:sd-2} gives
\begin{equation}\label{eq:sd-3}
\begin{aligned}
    2\langle u_b,v_b\rangle &\leqslant (\mathbf 1^Tu_b) (f_n+f_m -2f_\star- \frac{1}{2L}\Vert \mathbf g_n\Vert^2-\frac{1}{2L}\Vert \mathbf g_m\Vert^2 )
    \\ &\quad\quad\
 +\langle \mathbf x_{n+1} - \mathbf x_m,\mathbf g_n+\mathbf g_m-L(\mathbf x_n+\mathbf x_m-2\mathbf x_\star)\rangle.
\end{aligned}
\end{equation}
Subtracting~\eqref{eq:sd-3} from twice of~\eqref{eq:sd-1}  and eliminating $\langle u_b,v_b\rangle$, we obtain
\begin{equation}\small \label{eq:sd-4}\begin{aligned}
    0 &\geqslant L \Vert\mathbf  x_m-\mathbf x_\star\Vert^2-L\Vert\mathbf   x_{n+1}-\mathbf x_\star\Vert^2+\frac{C_b}{L}\Vert\mathbf g_m\Vert^2
    -(\mathbf 1^Tu_b) (f_n-f_m  -  \frac{1}{2L}\Vert \mathbf g_n\Vert^2- \frac{1}{2L}\Vert \mathbf g_m\Vert^2 )\\ &\quad\quad\  -\langle \mathbf x_{n+1}-\mathbf x_m,\mathbf g_n+\mathbf g_m-L(\mathbf x_n+\mathbf x_m-2\mathbf x_\star)\rangle.
    \end{aligned}
\end{equation}
Note that $\alpha$ is the scaled stepsize from $\mathbf x_n$ to $\mathbf x_{n+1}$, so $\mathbf x_{n} = \mathbf x_{n+1}+\frac{\alpha}{L} \mathbf g_n$.  Consequently,~\eqref{eq:sd-4} is simplified to
\begin{equation}\small \label{eq:sd-5}\begin{aligned}
    0 &\geqslant L\Vert  \mathbf x_m-\mathbf x_\star\Vert^2-L\Vert  \mathbf x_{n+1}-\mathbf x_\star\Vert^2+ \frac{C_b}{L}\Vert\mathbf g_m\Vert^2
   -(\mathbf 1^Tu_b) (f_n-f_m - \frac{1}{2L}\Vert \mathbf g_n\Vert^2-\frac{1}{2L}\Vert \mathbf g_m\Vert^2 )
   \\ & \quad\quad\  -\langle \mathbf x_{n+1}-\mathbf x_m,(1-\alpha)\mathbf g_n+\mathbf g_m-L(\mathbf x_{n+1}+\mathbf x_m-2\mathbf x_\star)\rangle
     \\ &=  \frac{C_b}{L}\Vert\mathbf g_m\Vert^2
    -(\mathbf 1^Tu_b)(f_n-f_m-\frac{1}{2L}\Vert\mathbf g_n\Vert^2-\frac{1}{2L}\Vert \mathbf g_m\Vert^2 )-\langle \mathbf x_{n+1}-\mathbf x_m,(1-\alpha)\mathbf g_n + \mathbf g_m \rangle.
    \end{aligned}
\end{equation}
Using $Q_{n,m},Q_{m,n}\leqslant 0$ by Lemma \ref{lemma:Qij} and $\alpha\geqslant 1$ by assumption, we obtain that
\begin{equation}\begin{aligned}\label{eq:sd-6}
0&\geqslant (\alpha-1)Q_{n,m}+Q_{m,n}\\ &
=(\alpha-1) \left(f_n  -f_m +\langle \mathbf g_n, \mathbf x_m - \mathbf x_n\rangle+\frac{1}{2L}\Vert  \mathbf g_m -  \mathbf g_n\Vert^2\right)
\\ &\quad\ +\left(f_m  -f_n +\langle \mathbf g_m, \mathbf x_n - \mathbf x_m\rangle+\frac{1}{2L}\Vert  \mathbf g_m -  \mathbf g_n\Vert^2\right)
\\ &=(\alpha-2)(f_n-f_m)+\langle \mathbf g_m-(\alpha-1)\mathbf g_n, \mathbf x_n - \mathbf x_m\rangle+\frac{\alpha}{2L}\Vert \mathbf g_m-\mathbf g_n\Vert^2.
    \end{aligned}
\end{equation}
Summing up~\eqref{eq:sd-5} and~\eqref{eq:sd-6} directly and recalling that $\mathbf x_n-\mathbf x_{n+1}=\frac{\alpha}{L}\mathbf g_n$, we obtain
\begin{equation}\label{eq:sd-7}\begin{aligned}
    0&\geqslant  \frac{C_b}{L}\Vert\mathbf g_m\Vert^2
    -(\mathbf 1^Tu_b)(f_n-f_m-\frac{1}{2L}\Vert\mathbf g_n\Vert^2-\frac{1}{2L}\Vert \mathbf g_m\Vert^2 )
    \\ & \quad\ + \langle \mathbf g_m -(\alpha - 1)\mathbf g_n,\frac{\alpha}{L}\mathbf g_n\rangle + (\alpha - 2)(f_n - f_m) +\frac{\alpha}{2L}\Vert\mathbf g_m - \mathbf g_n\Vert^2
    \\ &= \left(\alpha - 2-\mathbf 1^Tu_b\right)(f_n-f_m) +\frac{ \mathbf 1^Tu_b+3\alpha-2\alpha^2}{2L}\Vert\mathbf g_n\Vert^2+ \frac{2C_b+\mathbf 1^Tu_b+\alpha}{2L}\Vert\mathbf g_m\Vert^2.
    \end{aligned}
\end{equation}
Finally, recalling $u_b=[h_b^T,0]^T$, we have $\mathbf 1^Tu_b=\mathbf 1^Th_b$. Also, note that we assumed $\mathbf 1^Tu_b+2-\alpha>0$. Therefore~\eqref{eq:sd-7} is equivalent to~\eqref{eq:sd}. The proof is complete.
\end{proof}

Now we are ready to prove~\CPP.
\begin{proof}[Proof of~\CPP]

Recall that both schedules $h_a$ and $h_b$ are primitive as assumed. Since we adopt $h_a$ in the first $n$ steps and $h_b$ in the last $(m-n-1)$ steps, denoting $u_a =[h_a^T,0]^T$ and $u_b =[h_b^T,0]^T$, Definition \ref{def:primitive} implies
\begin{subequations}
\begin{align}
     & \frac L2\Vert\mathbf x_0-\mathbf
 x_\star\Vert^2+\langle u_a,v_a\rangle - (\mathbf 1^Tu_a)(f_n-f_\star)\geqslant \frac{L}{2}\Vert\mathbf x_n-\mathbf
 x_\star\Vert^2+\frac{C_a}{2L}\Vert\mathbf g_n\Vert^2,\label{eq:CTP11}
    \\ &
   \frac L2\Vert\mathbf x_{n+1}-\mathbf
 x_\star\Vert^2+\langle u_b,v_b\rangle - (\mathbf 1^Tu_b)(f_m-f_\star)\geqslant \frac{L}{2}\Vert\mathbf x_m-\mathbf
 x_\star\Vert^2+\frac{C_b}{2L}\Vert\mathbf g_m\Vert^2,\label{eq:CTP12}
 \end{align}
\end{subequations}
where  $C_a = (\mathbf 1^Th_a)\cdot (\mathbf 1^Th_a+1)$, $C_b = (\mathbf 1^Th_b)\cdot (\mathbf 1^Th_b+1)$, $v_a = [Q_{0,\star},\dotsc,Q_{n,\star}]^T$ and $v_b = [Q_{n+1,\star},\dotsc,Q_{m,\star}]^T$.
Moreover, by the definition of $Q_{n,\star}$ in~\eqref{Qij2}, we have the identity
\begin{equation}\label{eq:CTP2}
\alpha(Q_{n,\star} -  (f_n-f_\star)) = \alpha(-\langle \mathbf g_n,\mathbf x_n-\mathbf
 x_\star\rangle +\frac{1}{2L}\Vert\mathbf  g_n\Vert^2 ).
\end{equation}




 Let $x =\mathbf 1^Th_a$, $y=\mathbf 1^Th_b$ for notational simplicity. Then $C_a = x(x+1)$ and $C_b=y(y+1)$.
 Recall $\alpha=\varphi(\mathbf 1^Th_a,\mathbf 1^Th_b)$ as in~\eqref{eq:def-varphi}. Lemma \ref{lemma:CTP} implies that $\varphi(\mathbf 1^Th_a,\mathbf 1^Th_b)\in (1,\mathbf 1^Th_b+2)$. Hence Lemma \ref{lemma:sd} applies here, leading to
 \begin{equation}\label{eq:CTP3}
 f_n -f_m\geqslant \frac{y+3\alpha-2\alpha^2}{y+2-\alpha}\cdot \frac{1}{2L}\Vert\mathbf g_n\Vert^2+\frac{2y^2+3y+\alpha}{y+2-\alpha}\cdot \frac{1}{2L}\Vert\mathbf g_m\Vert^2.
 \end{equation}
 Noting that $\alpha=\varphi(x,y)$ is the root of the quadratic equation $\alpha^2+(x+y)\alpha - (xy+2x+2y+2)=0$, we have the identity
 \begin{equation}\label{eq:CTP-verify1}
 \begin{aligned}
&\frac{\alpha^2 - \alpha - C_a}{x+\alpha}-\frac{y+3\alpha-2\alpha^2}{y+2-\alpha} = \frac{\alpha^2 - \alpha-x(x+1)}{x+\alpha}-\frac{y+3\alpha-2\alpha^2}{y+2-\alpha}
\\
  =&~\alpha -x-1-\frac{y+3\alpha-2\alpha^2}{y+2-\alpha}
  = \frac{\alpha^2+(x+y)\alpha - (xy+2x+2y+2)}{y+2-\alpha}
  =0.
\end{aligned}
 \end{equation}
 Also, recalling $h_c =[h_a^T,\alpha,h_b^T]^T$ gives $\mathbf 1^Th_c= x+\alpha+y$. Letting  $C_c = (\mathbf 1^Th_c)\cdot (\mathbf 1^Th_c+1)= (x+\alpha+y)(x+\alpha+y+1)$, we have
\begin{equation}\label{eq:CTP-verify2}
 \begin{aligned}
&\frac{C_c-C_b}{x+\alpha}-\frac{2y^2+3y+\alpha}{y+2-\alpha} = \frac{ (x+\alpha+y)(x+\alpha+y+1)-y(y+1)}{x+\alpha}-\frac{2y^2+3y+\alpha}{y+2-\alpha}
\\
=&~(x+\alpha+2y+1)-\frac{2y^2+3y+\alpha}{y+2-\alpha}
 = \frac{-\alpha^2-(x+y)\alpha + (xy+2x+2y+2)}{y+2-\alpha} =0.
\end{aligned}
 \end{equation}
Using~\eqref{eq:CTP-verify1} and~\eqref{eq:CTP-verify2}, we have that~\cref{eq:CTP3} is equivalent to
\begin{equation}\label{eq:CTP4}
    (x+\alpha)(f_n-f_m)\geqslant (\alpha^2 - \alpha - C_a)\cdot \frac{1}{2L}\Vert\mathbf g_n\Vert^2+(C_c-C_b)\cdot \frac{1}{2L}\Vert\mathbf g_m\Vert^2.
\end{equation}

Now we compute the sum $\eqref{eq:CTP11}+\eqref{eq:CTP12}+\eqref{eq:CTP2}+\eqref{eq:CTP4}$ to obtain
\begin{equation}\small\label{eq:CTP5}\begin{aligned}
    \frac L2\Vert\mathbf x_0 - \mathbf x_\star\Vert^2&+(\langle u_a,v_a\rangle+\alpha Q_{n,\star}+\langle u_b,v_b\rangle)-(\mathbf 1^Tu_a+\alpha+\mathbf 1^Tu_b)(f_m-f_\star)+\frac L2\Vert\mathbf x_{n+1}-\mathbf x_\star\Vert^2
   \\ & \geqslant \frac L2\Vert\mathbf x_n-\mathbf x_\star\Vert^2+\frac L2\Vert\mathbf x_m-\mathbf x_\star\Vert^2-\alpha\langle \mathbf g_n,\mathbf x_n-\mathbf x_\star\rangle+\frac{\alpha^2}{2L}\Vert\mathbf g_n\Vert^2+\frac{C_c}{2L}\Vert\mathbf g_m\Vert^2.
\end{aligned}
\end{equation}
Define $u_c = [h_c^T,0]^T = [h_a^T,\alpha,h_b^T,0]^T\in\mathbb R_+^{m+1}$ and $v_c = [Q_{0,\star},\dotsc,Q_{m,\star}]^T$. Then we have $\langle u_c,v_c \rangle =\langle u_a,v_a\rangle+\alpha Q_{n,\star}+\langle u_b,v_b\rangle$ and $\mathbf 1^Tu_c = \mathbf 1^Tu_a+\alpha+\mathbf 1^Tu_b$. Also, since $\mathbf x_{n+1} = \mathbf x_n - \frac{\alpha}{L}\mathbf g_n$ by~\eqref{eq:gd}, we have
\begin{equation}
    \frac L2\Vert\mathbf x_{n+1}-\mathbf x_\star\Vert^2
    =\frac L2\Vert\mathbf x_{n}-\mathbf x_\star\Vert^2-\alpha\langle\mathbf g_n,\mathbf x_n-\mathbf x_\star\rangle+\frac{\alpha^2}{2L}\Vert\mathbf g_n\Vert^2.
\end{equation}
Consequently,~\eqref{eq:CTP5} indicates \[\frac L2\Vert\mathbf x_0 - \mathbf x_\star\Vert^2+\langle u_c,v_c\rangle-(\mathbf 1^Tu_c)(f_m-f_\star)\geqslant \frac L2\Vert\mathbf x_m-\mathbf x_\star\Vert^2+\frac{C_c}{2L}\Vert\mathbf g_m\Vert^2,
\]
which implies $h_c$ and $u_c$ dominate $\frac L2\Vert\mathbf x_m-\mathbf x_\star\Vert^2+\frac{C_c}{2L}\Vert\mathbf g_m\Vert^2$. This means the SS $h_c$ is primitive by~\cref{def:primitive}, and the proof is complete.
\end{proof}

\subsection{Proof of~\cref{them-CPD}}
The following lemma is helpful.
\begin{lemma}\label{lemma:CPDtight}
Let $h_a,h_d,h_e$ and $\beta$ be the same {as in}~\cref{them-CPD}.
Further suppose {$u_d\in\mathbb R_+^{m-n}$} and $\mathbf 1^Tu_d =2(\mathbf 1^Th_d)+1$. Let  $u_e =[h_a^T,\gamma,(\lambda_1-\lambda_2)u_d^T]^T$, where $\gamma,\lambda_1,\lambda_2$ are given by
\begin{equation}\label{eq:CPD-params}
 \gamma = \mathbf 1^Th_a+2,\quad
\lambda_2 = \frac{2(\mathbf{1}^Th_a)+2}{\mathbf{1}^Tu_d},\quad
 \lambda_1 = \lambda_2+\frac{1}{2} + \sqrt{\lambda_2+\frac {1}{4}}  .
\end{equation}
Then the following identities hold:
\begin{subequations}\label{eq:lemma:CPDtight}
\begin{align}
& \lambda_1 = (\lambda_1-\lambda_2)^2,\label{eq:lemma:CPDtighta}
\\ & \beta =\frac{\gamma (\lambda_1 - \lambda_2)+\lambda_2}{\lambda_1},\label{eq:lemma:CPDtightb}
\\ &\mathbf 1^Tu_e = 2(\mathbf 1^Th_e)+1.\label{eq:lemma:CPDtightc}
\end{align}
\end{subequations}
\end{lemma}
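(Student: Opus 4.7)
The three identities are independent algebraic assertions about the quantities $\gamma,\lambda_1,\lambda_2$, so the plan is to verify them one at a time, with \eqref{eq:lemma:CPDtighta} feeding into \eqref{eq:lemma:CPDtightb} and \eqref{eq:lemma:CPDtightc}. For \eqref{eq:lemma:CPDtighta}, I would read the definition of $\lambda_1$ as $\lambda_1-\lambda_2=\tfrac12+\sqrt{\lambda_2+\tfrac14}$ and simply square: $(\lambda_1-\lambda_2)^2=\tfrac14+\sqrt{\lambda_2+\tfrac14}+\lambda_2+\tfrac14=\lambda_2+\tfrac12+\sqrt{\lambda_2+\tfrac14}=\lambda_1$. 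This takes one line. A useful byproduct is $\sqrt{\lambda_1}=\lambda_1-\lambda_2$, which is what makes \eqref{eq:lemma:CPDtightb} tractable.

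For \eqref{eq:lemma:CPDtightb}, I would introduce the shorthand $x=\mathbf 1^Th_a$, $s=\mathbf 1^Tu_d$ (so that the assumption $\mathbf 1^Tu_d=2(\mathbf 1^Th_d)+1$ gives $\mathbf 1^Th_d=(s-1)/2$), and $A=\sqrt{s^2+8s(x+1)}$. In these variables the definitions unpack to
\[
\lambda_2=\frac{2(x+1)}{s},\qquad \lambda_1=\frac{4(x+1)+s+A}{2s},\qquad \sqrt{\lambda_1}=\frac{s+A}{2s},\qquad \gamma=x+2,
\]
and $\beta=\psi(x,(s-1)/2)$ simplifies to $\beta=(4-s+A)/4$. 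Using \eqref{eq:lemma:CPDtighta} to replace $\lambda_1-\lambda_2$ by $\sqrt{\lambda_1}$, the target identity becomes
\[
\frac{\gamma\sqrt{\lambda_1}+\lambda_2}{\lambda_1}=\beta,
\]
and I would cross-multiply and clear denominators; after substituting $A^2=s^2+8s(x+1)=s^2+2sB$ with $B=4(x+1)$, both sides collapse to $4B+4s+4A+Bs+BA$, which gives the equality.

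Identity \eqref{eq:lemma:CPDtightc} is then almost immediate. Using the same shorthand, $\mathbf 1^Tu_e=x+\gamma+(\lambda_1-\lambda_2)s=2x+2+\tfrac12(s+A)$ (by \eqref{eq:lemma:CPDtighta}), whereas $2(\mathbf 1^Th_e)+1=2x+2\beta+s$, and substituting $\beta=(4-s+A)/4$ from the previous step makes the two sides match. So the whole argument reduces to one squaring, one cross-multiplication check, and a short substitution.

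I do not expect any real obstacle: each step is bookkeeping once the shorthand $(x,s,A)$ is introduced. The only mildly delicate point is \eqref{eq:lemma:CPDtightb}, where one has to recognise that the relation $A^2=s^2+2sB$ is exactly what is needed to cancel the $-s^2$ produced by expanding $(4-s+A)(B+s+A)$; without that observation the calculation looks more formidable than it is. Using \eqref{eq:lemma:CPDtighta} to replace $\lambda_1-\lambda_2$ by $\sqrt{\lambda_1}$ at the very start of \eqref{eq:lemma:CPDtightb} is what keeps the algebra from blowing up.
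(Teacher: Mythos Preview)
Your proposal is correct. Parts \eqref{eq:lemma:CPDtighta} and \eqref{eq:lemma:CPDtightc} match the paper's proof essentially line for line. Part \eqref{eq:lemma:CPDtightb} is where you diverge: you immediately substitute everything in terms of the raw variables $(x,s,A)$, cross-multiply, and use $A^2=s^2+2sB$ to force cancellation. The paper instead keeps the expression in terms of $\lambda_1,\lambda_2$ and manipulates the fraction symbolically: it rewrites $\frac{\gamma(\lambda_1-\lambda_2)+\lambda_2}{\lambda_1}=1+\frac{\gamma-1}{\lambda_1-\lambda_2}$ (using \eqref{eq:lemma:CPDtighta}), then uses $\gamma-1=x+1=\tfrac{2y+1}{2}\lambda_2$ and \eqref{eq:lemma:CPDtighta} once more to reach the intermediate form $\beta=1+\tfrac{2y+1}{2}(\lambda_1-\lambda_2-1)$, which it then expands to match $\psi(x,y)$. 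The paper's route is a bit slicker and produces that intermediate form as a bonus, which it recycles directly in the proof of \eqref{eq:lemma:CPDtightc}; your route is a clean brute-force verification that avoids any cleverness but requires tracking one more expansion. Either is fine for a lemma of this kind.
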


\begin{proof}
    Since $\lambda_1 =\lambda_2+1/2+\sqrt{\lambda_2+1/4}$ in~\eqref{eq:CPD-params}, it leads to
\begin{equation}\label{eq:CPD-l1l2}
    (\lambda_1-\lambda_2)^2=\left(\frac12\right)^2+\left(\lambda_2+\frac14\right)+\sqrt{\lambda_2+\frac14}=\lambda_1,
\end{equation}
i.e.,~\eqref{eq:lemma:CPDtighta} holds.

Let $x = \mathbf 1^Th_a$ and $y = \mathbf 1^Th_d$ for notational simplicity. Then $\mathbf 1^Tu_d = 2y+1$, $\lambda_2 = 2(x+1)/(2y+1)$ and $\gamma = x+2$. Thus, we have
\begin{equation}\label{eq:CPD-tight1}\begin{aligned}
\frac{\gamma(\lambda_1-\lambda_2)+\lambda_2}{\lambda_1}
    &=1+\frac{(\gamma-1)(\lambda_1-\lambda_2)}{\lambda_1}
 \stackrel{\eqref{eq:CPD-l1l2}}{=} 1+\frac{\gamma-1}{\lambda_1-\lambda_2}
= 1+\frac{x+1}{\lambda_1-\lambda_2}
\\ &=1+\frac{\frac{2y+1}{2}\lambda_2}{\lambda_1-\lambda_2}
=1+\frac{2y+1}{2}\left(\frac{\lambda_1}{\lambda_1-\lambda_2}-1\right)
\\ &\!\!\!\!\stackrel{\eqref{eq:CPD-l1l2}}{=}1+\frac{2y+1}{2}(\lambda_1-\lambda_2-1).
\end{aligned}\end{equation}
Substituting  $\lambda_1-\lambda_2 = 1/2+\sqrt{\lambda_2+1/4}$ into~\eqref{eq:CPD-tight1}, we obtain that
\begin{equation}\label{eq:CPD-def-of-beta}\begin{aligned} &\quad \ \frac{\gamma(\lambda_1-\lambda_2)+\lambda_2}{\lambda_1}  = 1+\frac{2y+1}{2}\cdot \left(-\frac12+\sqrt{\lambda_2+\frac14}\right)
\\ &
= 1+\frac{2y+1}{2}\cdot \left(-\frac12+\sqrt{\frac{2(x+1)}{2y+1}+\frac14}\right)
= \frac{3-2y+\sqrt{(2y+1)(2y+8x+9)}}{4}.
\end{aligned}
\end{equation}
\cref{eq:CPD-def-of-beta} is exactly  $\psi(x,y)$ by~\eqref{eq:def-psi}, so~\eqref{eq:lemma:CPDtightb} holds.

{Recall $u_e =[h_a^T,\gamma,(\lambda_1-\lambda_2)u_d^T]^T$.} Noting $\gamma = \mathbf 1^Th_a+2$ in~\eqref{eq:CPD-params}, we have
\begin{equation}\label{eq:CPD-1u2h}
\begin{aligned}
    \mathbf 1^Tu_e & = \mathbf 1^Th_a+\gamma + (\lambda_1 - \lambda_2)(\mathbf 1^Tu_d)
=2x+2+(\lambda_1 - \lambda_2)(2y+1).
\end{aligned}
\end{equation}
Meanwhile,  as $h_e = [h_a^T,\beta,h_d^T]^T$, we have
\begin{equation}\label{eq:CPD-1u2h2}
\begin{aligned}
    2(\mathbf 1^Th_e)+1 &~~~~=~~~~~ 2(\mathbf 1^Th_a)+2\beta+2(\mathbf 1^Th_d) +1 = 2x+2\beta + 2y+1
    \\ &\!\!\!\stackrel{(\text{\ref{eq:lemma:CPDtightb},\ref{eq:CPD-tight1}})}{=}2x+2\cdot\left(1+\frac{2y+1}{2}(\lambda_1-\lambda_2-1)\right)+2y+1
    \\ &~~~~=~~~~~(2x+2)+(\lambda_1-\lambda_2)(2y+1).
\end{aligned}
\end{equation}
Equations~\eqref{eq:CPD-1u2h} and~\eqref{eq:CPD-1u2h2} give~\eqref{eq:lemma:CPDtightc} immediately.
\end{proof}

Now we are ready to prove~\cref{them-CPD}.
\begin{proof}[Proof of~\cref{them-CPD}]
Since $h_e$ starts with the primitive SS $h_a\in\mathbb R_+^n$, we have
\begin{equation}\label{eq:CPD1}
\frac L2\Vert\mathbf x_0-\mathbf x_\star\Vert^2+\langle u_a,v_a\rangle - (\mathbf 1^Tu_a)(f_n-f_\star)\geqslant \frac L2\Vert\mathbf x_n-\mathbf x_\star\Vert^2+\frac{C}{2L}\Vert \mathbf g_n\Vert^2 ,
\end{equation}
where $C=(\mathbf 1^Th_a)\cdot (\mathbf 1^Th_a+1)$, $u_a =[h_a^T,0]^T$ and $v_a = [Q_{0,\star},\dotsc,Q_{n,\star}]^T$.
Since $h_d\in\mathbb R_+^{m-n-1}$ is dominant and starts from the $(n+1)$-th step, by~\cref{def:dominant} there exists a vector $u_d\in\mathbb R_+^{m-n}$ such that $\mathbf 1^Tu_d =2(\mathbf 1^Th_d)+1$ and
\begin{equation}\label{eq:CPD2}
\frac L2\Vert\mathbf x_{n+1}-\mathbf x_\star\Vert^2+\langle u_d,v_d\rangle - (\mathbf 1^Tu_d)(f_m-f_\star) \geqslant \frac L2\Vert \mathbf x_{n+1} -\mathbf x_\star - L^{-1} G_du_d\Vert^2,
\end{equation}
where $v_d = [Q_{n+1,\star},\dotsc,Q_{m,\star}]^T$ and $G_d =[\mathbf g_{n+1},\dotsc,\mathbf g_m]$.

By~\cref{lemma:Qi*-ineq}, we have  $Q_{i,\star}\leqslant ( f_n -f_\star-\frac{1}{2L}\Vert \mathbf g_n\Vert^2)+\frac 1L\langle\mathbf g_i,\mathbf g_n -L(\mathbf x_n-\mathbf x_\star)\rangle$. Because $\langle u_d,v_d\rangle=\langle u_d,[Q_{n+1,\star},\dotsc,Q_{m,\star}]^T\rangle$ and $u_d\geqslant 0$, we obtain
\begin{equation}\label{eq:CPD3}
     \langle u_d, v_d\rangle \leqslant (\mathbf 1^Tu_d)(f_n-f_\star -\frac{1}{2L}\Vert \mathbf g_n\Vert^2)+\frac 1L\langle G_du_d, \mathbf g_n - L(\mathbf x_n-\mathbf x_\star)\rangle.
\end{equation}
Moreover, it follows from the definition of $Q_{n,\star}$ in~\eqref{Qij2} that
\begin{equation}\label{eq:CPD4}
 Q_{n,\star} - (f_m-f_\star)  =  f_{n}-f_m-\langle \mathbf g_n,\mathbf x_n-\mathbf x_\star\rangle +\frac{1}{2L}\Vert\mathbf  g_n\Vert^2 .
\end{equation}
Let $\gamma\geqslant 0$ and $\lambda_1\geqslant \lambda_2\geqslant 0$ be  scalars defined in~\eqref{eq:CPD-params}, we rewrite (\ref{eq:CPD1}--\ref{eq:CPD4}) as
\begin{equation}\label{eq:CPD5}\footnotesize
    \left\{\begin{aligned}
   & \frac L2\Vert\mathbf x_0-\mathbf x_\star\Vert^2+\langle u_a,v_a\rangle - (\mathbf 1^Tu_a)(f_m-f_\star)\\ &\qquad\qquad\qquad\qquad \geqslant \frac L2\Vert\mathbf x_n-\mathbf x_\star\Vert^2+\frac{C}{2L}\Vert \mathbf g_n\Vert^2  +(\mathbf 1^Tu_a)(f_n-f_m),
    \\ & \lambda_1\langle u_d,v_d\rangle -\lambda_1(\mathbf 1^Tu_d)(f_m-f_\star)\\ &\qquad\qquad\qquad\qquad  \geqslant \frac{\lambda_1 L}{2}\Vert \mathbf x_{n+1} -\mathbf x_\star - L^{-1} G_du_d\Vert^2-\frac{\lambda_1 L}{2}\Vert\mathbf x_{n+1} -\mathbf x_\star\Vert^2,
    \\ & -\lambda_2\langle u_d, v_d\rangle+\lambda_2(\mathbf 1^Tu_d)(f_m-f_\star)\\ &\qquad\qquad\qquad\qquad \geqslant -\lambda_2 (\mathbf 1^Tu_d)(f_n -f_m-\frac{1}{2L}\Vert \mathbf g_n\Vert^2)-\frac{\lambda_2}{L}\langle G_du_d, \mathbf g_n -L( \mathbf x_n-\mathbf x_\star)\rangle,
    \\ &
  \gamma(  Q_{n,\star} -  (f_m-f_\star) )= \gamma(f_{n}-f_m-\langle \mathbf g_n,\mathbf x_n-\mathbf x_\star\rangle +\frac{1}{2L}\Vert\mathbf  g_n\Vert^2) .
    \end{aligned}\right.
\end{equation}


We now sum up the four inequalities in~\eqref{eq:CPD5} and simplify the left hand side (LHS) and right hand side (RHS), respectively. Adding up the LHSs of~\eqref{eq:CPD5}, we obtain
\begin{equation}\label{eq:CPDLHS1}
\begin{aligned}
    \text{LHS}=&\frac L2\Vert\mathbf x_0-\mathbf x_\star\Vert^2+
    \langle u_a,v_a\rangle+\gamma Q_{n,\star}+(\lambda_1-\lambda_2)\langle u_d,v_d\rangle
   \\ & - (\mathbf 1^Tu_a+\gamma +(\lambda_1-\lambda_2)(\mathbf 1^Tu_d))(f_m-f_\star).
    \end{aligned}
\end{equation}
Let $u_e =[h_a^T,\gamma,(\lambda_1-\lambda_2)u_d^T]^T\in\mathbb R_+^{m+1}$ and $v_e = [Q_{0,\star},\dotsc,Q_{m,\star}]^T$. Then~\eqref{eq:CPDLHS1} implies
\begin{equation}\label{eq:CPDLHS2}
\text{LHS} = \frac L2\Vert\mathbf x_0 - \mathbf x_\star\Vert^2 + \langle u_e,v_e\rangle -(\mathbf 1^Tu_e)(f_m-f_\star).
\end{equation}
For the sum of RHSs of~\eqref{eq:CPD5}, we have
\begin{equation}\label{eq:CPDRHS1}\small
\begin{aligned}
   \text{RHS}&= \frac{L}{2}\Vert  \mathbf x_n-\mathbf x_\star\Vert^2+{\frac{\overbrace{C+\lambda_2(\mathbf 1^Tu_d)+\gamma}^{A_2}}{2L}}\Vert \mathbf g_n\Vert^2+(f_n-f_m)\overbrace{(\mathbf 1^Tu_a - \lambda_2(\mathbf 1^Tu_d)+\gamma)}^{A_1}
   \\ & +\frac{\lambda_1}{2L}\Vert G_du_d\Vert^2
   -\frac 1L\langle G_du_d,\overbrace{\lambda_1 L (\mathbf x_{n+1} -\mathbf x_\star)+\lambda_2\mathbf g_n-\lambda_2L(\mathbf x_n-\mathbf x_\star)}^{A_3}\rangle -\gamma \langle \mathbf g_n,  \mathbf x_n-\mathbf x_\star\rangle.
    \end{aligned}
\end{equation}
Now we simplify $A_1,A_2$ and $A_3$ in~\eqref{eq:CPDRHS1} with~\cref{lemma:CPDtight}. 
Using $u_a=[h_a^T,0]^T$ and~\eqref{eq:CPD-params},  we have
\begin{equation}\label{eq:CPD6}A_1 =
    \mathbf 1^Th_a - \lambda_2(\mathbf 1^Tu_d)+(\mathbf 1^Th_a+2) =0.
\end{equation}
Due to $C=(\mathbf 1^Th_a)\cdot (\mathbf 1^Th_a+1)$ in~\eqref{eq:CPD1}, with~\eqref{eq:CPD-params} we have
\begin{equation}\label{eq:CPD7}\begin{aligned}
      A_2&= {C+2(\mathbf 1^Th_a+1)+\gamma}\\ & =(\mathbf 1^Th_a)\cdot (\mathbf 1^Th_a+1)+2(\mathbf 1^Th_a+1)+(\mathbf 1^Th_a+2)
       =(\mathbf 1^Th_a+2)^2
  = {\gamma^2}.
  \end{aligned}
\end{equation}
Since $\beta$ is the scaled stepsize from $\mathbf x_n$ to $\mathbf x_{n+1}$, we have the relation that $\mathbf x_{n+1}=\mathbf x_n-\frac{\beta}{L}\mathbf g_n $. Substituting $\beta=(\gamma(\lambda_1-\lambda_2)+\lambda_2)/\lambda_1$  from~\cref{lemma:CPDtight},  we get
\begin{equation}\label{eq:CPD9}\begin{aligned}A_3
&= \lambda_1L(\mathbf x_n-\mathbf x_\star)-(\gamma(\lambda_1-\lambda_2)+\lambda_2)\mathbf g_n +\lambda_2 \mathbf g_n-\lambda_2 L(\mathbf x_n-\mathbf x_\star)
\\ &= (\lambda_1 -\lambda_2)(L(\mathbf x_n-\mathbf x_\star) -\gamma \mathbf g_n).
\end{aligned}
\end{equation}
Plugging (\ref{eq:CPD6}--\ref{eq:CPD9}) back into~\eqref{eq:CPDRHS1} and using the identity~\eqref{eq:lemma:CPDtighta}, we obtain
\begin{equation}\label{eq:CPD10}
   \begin{aligned}
   \text{RHS} &=\frac{L}{2}\Vert  \mathbf x_n-\mathbf x_\star\Vert^2+\frac{\gamma^2}{2L}\Vert \mathbf g_n\Vert^2+
\frac{(\lambda_1 - \lambda_2)^2}{2L}\Vert G_du_d\Vert^2
\\ &\quad\quad\
 -\frac1L\langle G_du_d,  (\lambda_1-\lambda_2)(L(\mathbf x_n-\mathbf x_\star)-\gamma\mathbf g_n)\rangle - \gamma \langle \mathbf g_n,  \mathbf x_n-\mathbf x_\star\rangle
 \\ & =\frac L2\Vert  \mathbf x_n-\mathbf x_\star-\frac{\gamma}{L}\mathbf g_n -\frac{\lambda_1 - \lambda_2}{L}G_du_d\Vert^2.
    \end{aligned}
\end{equation}
Because $u_e = [h_a^T,\gamma, (\lambda_1-\lambda_2)u_d^T]^T$, letting $G_e =[\mathbf g_0,\dotsc,\mathbf g_m]$, we have
\begin{equation}\label{eq:CPD11}
\mathbf x_0-\mathbf x_\star -L^{-1} G_eu_e =\mathbf  x_0 -\mathbf x_\star -\sum_{i=0}^{n-1} \frac{h_i}{L}\mathbf g_i- \frac{\gamma}{L}\mathbf g_n -\frac{\lambda_1-\lambda_2}{L}G_du_d.
\end{equation}
Noting that $\mathbf x_0 - \sum_{i=0}^{n-1} \frac{h_i}{L}\mathbf g_i =\mathbf x_n$ by the definition of gradient descent method~\eqref{eq:gd},~\eqref{eq:CPD11} indicates $\mathbf x_0 -\mathbf x_\star- L^{-1}G_eu_e = \mathbf x_n-\mathbf x_\star-(\gamma/L)\mathbf g_n - (\lambda_1 - \lambda_2)L^{-1}G_du_d$. Thus~\eqref{eq:CPD10} implies that $\text{RHS}=\frac L2\Vert \mathbf x_0-\mathbf x_\star - L^{-1}G_eu_e\Vert^2$. This,  together with~\eqref{eq:CPDLHS2}, shows that  $h_e$ and $u_e$ dominate $\frac L2\Vert \mathbf x_0-\mathbf x_\star - L^{-1}G_eu_e\Vert^2$. Finally, since $\mathbf 1^Tu_e = 2(\mathbf 1^Th_e)+1$ by~\eqref{eq:lemma:CPDtightc} in~\cref{lemma:CPDtight}, we conclude that $h_e$ is a dominant SS as desired.
\end{proof}

\section{SSs from Dynamic Programming}\label{sect:applications}
In this section, we apply the concatenation techniques in \CC~to establish a set of primitive SSs $H_\algA$ and a set of dominant SSs $H_\algB$. We further show that SSs in $H_\algB$ achieves asymptotically that $f_n-f_\star\leqslant (0.4239+o(1))\cdot n^{-\varrho}\cdot \frac L2\Vert\mathbf x_0-\mathbf x_\star\Vert^2$.

\subsection{Algorithm to Generate Efficient SSs}\label{sect:alg_generate}
In this subsection, we propose an algorithm {for generating efficient SSs}, by recursively applying \texttt{ConPP} an \texttt{ConPD}. The algorithm is inspired by the principles of dynamic programming and consists of two phases, each recursively invoking \texttt{ConPP} or \texttt{ConPD} procedures to construct new primitive or dominant SSs from {two SSs with fewer steps}.

\begin{definition}\label{def-alg1} Let $H_\algA=\{h_\algA^{(i)}:\ i\in\N_0\}$ be  a family of primitive SSs defined recursively as follows:  $h_\algA^{(0)}=[~]^T\in\mathbb R^0$,  and  for $n\geqslant 1$,
\begin{equation}\label{eq:def-alg1}
\begin{aligned}
\mathcal H_\algA^{(n)}&=\left\{\Call{ConPP}{h_\algA^{(k)}, h_\algA^{(n-k-1)}}: \ k=0,1,\dotsc,n-1 \right\}\subset \mathbb R^n,
\\
    h_\algA^{(n)}&\in\argmax_{h}\left\{\mathbf 1^Th:\ h\in\mathcal H_\algA^{(n)}\right\},
\end{aligned}
\end{equation}
where any maximizer may be selected if it is not unique.
\end{definition}

The rationale behind employing ``max'' operators in~\eqref{eq:def-alg1}  is straightforward. For each iteration count $n\geqslant 1$, the set $\mathcal{H}_\algA^{(n)}$ contains all primitive SSs of $n$ iterations that can be created by concatenating $h_\algA^{(k)}$ and $h_\algA^{(n-k-1)}$ via \texttt{ConPP}. Amongst these, the SS $h$ that maximizes $\mathbf{1}^T h$ is selected as $h_\algA^{(n)}$. 

{Importantly, this greedy selection is without loss of optimality within the class of all primitive schedules constructible from the empty schedule \( [~]^T \) by repeated application of \texttt{ConPP}. Indeed, by~\cref{them-CPP}, the total sum of any concatenated schedule \( h_c = \texttt{ConPP}(h_a, h_b) \) satisfies \(
\mathbf{1}^T h_c = \mathbf{1}^T h_a + \varphi(\mathbf{1}^T h_a, \mathbf{1}^T h_b) + \mathbf{1}^T h_b\). Since $(x+\varphi(x,y)+y)$ is increasing in both $x$ and $y$, it follows that $\mathbf{1}^T h_c$ is strictly increasing in both $\mathbf{1}^Th_a$ and $\mathbf{1}^Th_b$. Consequently, for any fixed split \( k \), the value \( \mathbf{1}^T h_c \) is maximized if and only if both \( \mathbf{1}^T h_a \) and \( \mathbf{1}^T h_b \) are individually maximized. This monotonicity property ensures that retaining only the schedule with maximal \( \mathbf{1}^T h \) at each length \( k \) suffices to recover the globally optimal schedule among all those generated by the \texttt{ConPP} recursion.} 

According to~\cref{coro:bound-primitive}, larger values of $\mathbf 1^Th$ yield better worst-case performance for primitive stepsize schedules. This is illustrated in the next proposition, whose proof follows directly from~\CPP\ and~\cref{coro:bound-primitive}.
\begin{proposition}\label{them-alg1} Any SS $h_\algA^{(n)}\in \mathbb R^n$ defined in Definition \ref{def-alg1} is primitive. Gradient descent~\eqref{eq:gd} with SS $h_\algA^{(n)}$ has
\[f_n-f_\star\leq \frac{1}{2(\mathbf 1^Th_\algA^{(n)})+1}\cdot \frac L2\Vert\mathbf x_0-\mathbf x_\star\Vert^2,\]
where equality is attained by the Huber function~\eqref{eq:huber1}.
\end{proposition}

A parallel logic applies to \texttt{ConPD} techniques to generate dominant SSs.

\begin{definition}\label{def-alg2} Let $H_\algB=\{h_\algB^{(i)}:\ i\in\N_0\}$ be  a family of dominant SSs defined recursively as follows: $h_\algB^{(0)}=[~]^T\in\mathbb R^0$,  and for $n\geqslant 1$,
\begin{equation}\label{eq:def-alg2}
\begin{aligned}
\mathcal H_\algB^{(n)}&=\left\{\Call{ConPD}{h_\algA^{(k)}, h_\algB^{(n-k-1)}}: \ k=0,1,\dotsc,n-1 \right\}\subset \mathbb R^n,
\\
    h_\algB^{(n)}&\in\argmax_{h}\left\{\mathbf 1^Th:\ h\in\mathcal H_\algB^{(n)}\right\},
\end{aligned}
\end{equation}
where $h_\algA^{(k)}$ is the primitive SS defined in Definition \ref{def-alg1},
and any maximizer may be selected if it is not unique.
\end{definition}

The complexity bound of the dominant SSs generated by the procedure in~\cref{def-alg2} follows from \CPD\ and~\cref{them-dominate}, as illustrated below.
\begin{proposition}\label{them-alg2} Any SS $h_\algB^{(n)}\in \mathbb R^n$ defined in Definition \ref{def-alg2} is dominant.
Gradient descent~\eqref{eq:gd} with SS $h_\algB^{(n)}$ has
\[f_n-f_\star\leq \frac{1}{2(\mathbf 1^Th_\algB^{(n)})+1}\cdot \frac L2\Vert\mathbf x_0-\mathbf x_\star\Vert^2, \]
where equality is attained by the Huber function~\eqref{eq:huber1}.
\end{proposition}

In the following two propositions, we show that the key quantities $\mathbf 1^Th_\algA^{(n)}$ and $\mathbf 1^Th_\algB^{(n)}$ can be computed independently, without demanding the full vectors $h_\algA^{(n)}$ or $h_\algB^{(n)}$.

\begin{proposition}\label{prop-recur1}
The sequence $\{\mathbf 1^Th_\algA^{(n)}\}$ satisfies the following recursion:
\begin{equation}\label{prop-eq-recur-r1}\begin{aligned}
    \mathbf 1^Th_\algA^{(n)}&=\max_{0\leqslant k<n}\left\{\mathbf 1^Th_\algA^{(k)} + \varphi(\mathbf 1^Th_\algA^{(k)},\mathbf 1^Th_\algA^{(n-k-1)})+\mathbf 1^Th_\algA^{(n-k-1)}    \right\},
    \end{aligned}
\end{equation}
where $\varphi$ is defined in~\eqref{eq:def-varphi} and $n\geqslant 1$.
\end{proposition}

\begin{proposition}\label{prop-recur2}
The sequence $\{\mathbf 1^Th_\algB^{(n)}\}$ satisfies the following recursion:
\begin{equation}\label{prop-eq-recur-r2}\begin{aligned}
\mathbf 1^Th_\algB^{(n)}=\max_{0\leqslant k<n}\left\{\mathbf 1^Th_\algA^{(k)}+\psi(\mathbf 1^Th_\algA^{(k)},\mathbf 1^Th_\algB^{(n-k-1)})+\mathbf 1^Th_\algB^{(n-k-1)}\right\},
    \end{aligned}
\end{equation}
where $\psi$ is defined in~\eqref{eq:def-psi} and $n\geqslant 1$.
\end{proposition}

Now let us introduce an algorithm that produces an SS for any given $n$.
We first generate a collection of primitive SSs $H_\algA$ using~\cref{def-alg1}, which is summarized in~\cref{alg1}.
Then we use this collection to generate a new set of dominant SSs $H_\algB$ using~\cref{def-alg2}, which is summarized in~\cref{alg2}.
{ As demonstrated in Propositions \ref{prop-recur1} and \ref{prop-recur2}, $\mathbf 1^Th_\algA^{(n)}$ and $\mathbf 1^Th_\algB^{(n)}$ can be calculated efficiently by storing all the previous values of  $\mathbf 1^Th_\algA^{(k)}$ and $\mathbf 1^Th_\algB^{(k)}$ for $k<n$. This enables the computation of stepsize schedules for all $n=1,\dotsc,N$ in $O(N^2)$ time. The time and space complexities are summarized in~\cref{them-time-storage}. A Python implementation of the algorithms is available at~\colorurl{https://github.com/ForeverHaibara/concat-stepsize/tree/main}.}

\begin{center}\cbl
\begin{minipage}{0.49\textwidth}
\begin{algorithm} [H]\small
\caption{\textproc{priDP}($N$)} \label{alg1}
\begin{algorithmic}[1]\cbl
\State   $H_\algA[0] \gets  [~]^T$
\State $S_\algA[0]\gets 0$
\For{$n \gets 1$ to $N$}
    \State $k_{\text{best}}\gets 0$, $s_{\text{max}} \gets 0$
    \For{$k \gets 0$ to $n-1$}
        \State $k'\gets n-k-1$
        \State $s\gets S_\algA [k]+\varphi(S_\algA[k],S_\algA[k'])+S_\algA[k']$
        \If{$s \geqslant s_{\text{max}}$}
            \State $k_{\text{best}} \gets k$, $s_{\text{max}} \gets s$
        \EndIf
    \EndFor
    \State $k\gets k_{\text{best}}$, $k'\gets n-k_{\text{best}}-1$
    \State $H_\algA[n] \gets  \Call{ConPP}{H_\algA[k], H_\algA[k']}$
    \State $S_\algA[n]\gets s_{\text{max}}$
\EndFor
\State \textbf{return} $H_\algA$, $S_\algA$
\end{algorithmic}
\end{algorithm}
\end{minipage}
\hfill
\begin{minipage}{0.499\textwidth}
\begin{algorithm}[H] \small
\caption{{\textproc{domDP}($N$)}}    \label{alg2}
\begin{algorithmic}[1]\cbl
\State   $H_\algB[0] \gets [~]^T$, $S_\algB[0]\gets 0$ \State $H_\algA,S_\algA\gets\Call{priDP}{N}$ \label{line:invoke-alg1}
\For{$n \gets 1$ to $N$}
    \State $k_{\text{best}}\gets 0$, $s_{\text{max}} \gets 0$
    \For{$k \gets 0$ to $n-1$}        
        \State $k'\gets n-k-1$
        \State $s\gets S_\algA[k]+\psi(S_\algA[k],S_\algB[k'])+S_\algB[k']$
        \If{$s \geqslant s_{\text{max}}$}
            \State $k_{\text{best}} \gets k$, $s_{\text{max}} \gets s$
        \EndIf
    \EndFor 
    \State $k\gets k_{\text{best}}$, $k'\gets n-k_{\text{best}}-1$
    \State $H_\algB[n] \gets  \Call{ConPD}{H_\algA[k], H_\algB[k']}$
    \State $S_\algB[n]\gets s_{\text{max}}$
\EndFor
\State \textbf{return} $H_\algB$, $S_\algB$
\end{algorithmic}
\end{algorithm}
\end{minipage}
\end{center}

\begin{theorem}\label{them-time-storage} Given an integer $N\ge1$, the values of $h_\algA^{(n)}\in\mathbb R^n$ in Definition \ref{def-alg1} and $h_\algB^{(n)}\in\mathbb R^n$ in Definition \ref{def-alg2} for $n=0,1,\dotsc,N$ can be computed using Algorithms \ref{alg1} and \ref{alg2}, respectively. Both algorithms run in $O(N^2)$ time and require $O(N^2)$ storage.
\end{theorem}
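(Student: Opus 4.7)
\medskip

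My plan is to decompose the theorem into three claims, corresponding to correctness, space complexity, and time complexity, and handle them in that order.

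First, for correctness, I will argue by strong induction on $n$ that at iteration $n$ Algorithm~\ref{alg1} produces exactly $h_\algA^{(n)}$ as in Definition~\ref{def-alg1}, and analogously Algorithm~\ref{alg2} produces $h_\algB^{(n)}$ as in Definition~\ref{def-alg2}. The base case $n=0$ is the initialization $H_\algA[0]=[~]^T$. For the inductive step I would observe that the inner loop enumerates exactly the set $\mathcal H_\algA^{(n)}$ (resp.\ $\mathcal H_\algB^{(n)}$), because by the inductive hypothesis all entries $H_\algA[k]$ and $H_\algB[k]$ for $k<n$ already agree with the recursive definitions; the $\argmax$ over $\mathbf 1^Th$ implemented by the running maximum then yields the required element. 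No subtle interaction is involved here.

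Second, the space bound is immediate: Algorithm~\ref{alg1} stores $N+1$ primitive schedules of lengths $0,1,\dots,N$, contributing $\sum_{n=0}^{N}n=O(N^2)$ cells. Algorithm~\ref{alg2} additionally keeps a copy of $H_\algA$ produced by the call on line~\ref{line:invoke-alg1}, and stores $N+1$ dominant schedules of the same total length $O(N^2)$. The auxiliary variables $h_{\max},s_{\max},s$ require only $O(N)$ extra storage, which is absorbed.

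Third, for the time bound the main obstacle is that a naive implementation of lines~\ref{line:5}--\ref{line:6} in Algorithm~\ref{alg1} performs a fresh concatenation of length $n$ and an $O(n)$ sum inside the inner loop, giving $O(n^2)$ per outer step and $O(N^3)$ overall. To recover the claimed $O(N^2)$, I will invoke Propositions~\ref{prop-recur1} and~\ref{prop-recur2}: along with each schedule I also cache the scalar $\mathbf 1^Th_\algA^{(k)}$ (resp.\ $\mathbf 1^Th_\algB^{(k)}$). Then in the inner loop, comparing candidates reduces to evaluating the scalar expression $\mathbf 1^Th_\algA^{(k)}+\varphi(\mathbf 1^Th_\algA^{(k)},\mathbf 1^Th_\algA^{(n-k-1)})+\mathbf 1^Th_\algA^{(n-k-1)}$ in $O(1)$ time, so selecting the optimal split index $k^\ast$ costs $O(n)$ in total. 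Only once, after the loop, do I actually invoke \texttt{ConPP} with $k=k^\ast$ to materialize $h_\algA^{(n)}$, which is another $O(n)$ work; the scalar $\mathbf 1^Th_\algA^{(n)}$ is updated at the same time. Summing over $n=1,\dots,N$ yields $\sum_{n=1}^{N}O(n)=O(N^2)$ time, and the identical argument using Proposition~\ref{prop-recur2} handles Algorithm~\ref{alg2}. Since $\Call{PriDP}{N}$ itself runs in $O(N^2)$ time, the overall cost of $\Call{DomPP}{N}$ remains $O(N^2)$, completing the proof.
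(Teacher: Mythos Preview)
Your proposal is correct and follows essentially the same approach as the paper: the key idea is identical, namely caching the scalars $\mathbf 1^Th_\algA^{(k)}$ (resp.\ $\mathbf 1^Th_\algB^{(k)}$) and using Propositions~\ref{prop-recur1} and~\ref{prop-recur2} to select the optimal split index in $O(n)$ time before performing a single $O(n)$ concatenation, giving $O(N^2)$ overall. Your treatment is in fact slightly more thorough than the paper's, which omits the explicit correctness induction and the detailed storage accounting.
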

\begin{proof} 
We analyze~\cref{alg1}, and the analysis for~\cref{alg2} is analogous. The algorithm maintains two arrays: \(H_\algA[n]\) storing the actual stepsize vector of length \(n\), and \(S_\algA[n] = \mathbf{1}^T H_\algA[n]\) storing its sum. For each \(n = 1, \dotsc, N\), the inner loop evaluates all \(n\) candidate splits (for \(k = 0, \dotsc, n-1\)) using only scalar arithmetic via the recurrence in~\cref{prop-recur1}, which takes \(O(n)\) time for each $n$. After identifying the optimal index \(k_{\text{best}}\), the algorithm constructs \(H_\algA[n]\) by calling \texttt{ConPP}. This operation creates a vector of length \(n\) and therefore costs \(O(n)\) time. Summing over \(n = 1\) to \(N\), the total time complexity is \(\sum_{n=1}^N O(n) = O(N^2)\). The total space complexity is also $O(N^2)$, dominated by the storage of all vectors in $\{H_\algA[n]\}_{n=0}^N$. The same reasoning applies to~\cref{alg2}, which incurs an additional $O(N^2)$ time and space after invoking~\cref{alg1}.
\end{proof}


We remark that the values $\mathbf 1^Th_\algA^{(n)}$ and $\mathbf 1^Th_\algB^{(n)}$ for $n=0,1,\dotsc,N$ can be computed within $O(N)$ storage if the explicit SSs are not required.


\subsection{Asymptotic Convergence}\label{sect:asymptotic}

{In this subsection, we analyze the asymptotic behavior of the SSs constructed in~\cref{sect:alg_generate}.
For readability, we defer the proofs of Propositions~\ref{them-convergence1},~\ref{them-convergence1.2},~\ref{them-convergence2.1} and~\ref{them-convergence2.2} to \cref{appendix:asymptotic}.
}


\subsubsection{Convergence Rate for SS $h_\algA^{(n)}$}
We commence with $\mathbf{1}^T h_\algA^{(n)}$, presenting Propositions \ref{them-convergence1} and \ref{them-convergence1.2} to demonstrate that $(\sqrt{2} - 1)(n+2)^\varrho \leqslant \mathbf{1}^T h_\algA^{(n)} + 1 \leqslant (n+1)^\varrho$ holds universally for $n \in \mathbb{N}_0$.

%
%
%
%

\begin{proposition}[Upper Bound for $\mathbf 1^Th_\algA^{(n)}$] \label{them-convergence1} It holds that $\mathbf 1^Th_\algA^{(n)}+1\leqslant (n+1)^\varrho$. Moreover, when $n = 2^l-1$, $l\in\N_0$, it holds that $\mathbf 1^Th_\algA^{(n)}+1=(n+1)^\varrho$.
\end{proposition}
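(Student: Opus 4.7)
Set $s_n := \mathbf 1^T h_\algA^{(n)} + 1$, so the claim is $s_n \leq (n+1)^\varrho$ with equality at $n = 2^l - 1$. First I would rewrite the recursion in \cref{prop-recur1} in the cleaner form
\begin{equation*}
s_n = \max_{0 \leq k < n} T(s_k,\, s_{n-k-1}), \qquad T(a,b) := \tfrac{(a+b) + \sqrt{(a+b)^2 + 4ab}}{2},
\end{equation*}
by substituting $a = s_k, b = s_{n-k-1}$ into \eqref{eq:def-varphi} and simplifying. Observe that $T(a,b)$ is the unique positive root of $z^2 - (a+b)z - ab = 0$, so $T^2 = (a+b)T + ab$; implicit differentiation yields $T_a = (T+b)/(2T - a - b) > 0$ and similarly for $T_b$, so $T$ is strictly increasing in each argument. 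Finally, the self-similarity $T(c,c) = (1+\sqrt 2)\,c = 2^\varrho c$ will drive the equality case.

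The proof is by induction on $n$. The base $n = 0$ gives $s_0 = 1 = 1^\varrho$. For the inductive step, assume $s_k \leq (k+1)^\varrho$ for all $k < n$. By monotonicity of $T$,
\[
s_n \leq \max_{0 \leq k < n} T\bigl((k+1)^\varrho,\,(n-k)^\varrho\bigr),
\]
so it suffices to show $T(p,q) \leq M$ for $p = (k+1)^\varrho$, $q = (n-k)^\varrho$, $M = (n+1)^\varrho$. Using $T(p,q) \leq M \Longleftrightarrow pq + M(p+q) \leq M^2$ and dividing by $M^2$, then setting $\alpha = (k+1)/(n+1)$ and $\beta = (n-k)/(n+1)$ (so $\alpha + \beta = 1$), this reduces to the \emph{key inequality}
\begin{equation}\label{eq:keyineq}
\alpha^\varrho + \beta^\varrho + \alpha^\varrho \beta^\varrho \;\leq\; 1, \qquad \alpha, \beta \geq 0,\ \alpha + \beta = 1,
\end{equation}
equivalently $(1+\alpha^\varrho)(1+\beta^\varrho) \leq 2$.

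Proving \eqref{eq:keyineq} is the main obstacle. Equality is achieved at three points $\alpha \in \{0, \tfrac12, 1\}$: the boundary cases are immediate, and at $\alpha = \tfrac12$ one uses $2^\varrho = 1+\sqrt 2$ to get $2\cdot 2^{-\varrho} + 4^{-\varrho} = 2(\sqrt 2 - 1) + (\sqrt 2 - 1)^2 = 1$. Let $F(\alpha) := \alpha^\varrho + (1-\alpha)^\varrho + \alpha^\varrho(1-\alpha)^\varrho$; $F$ is symmetric about $\tfrac12$, so $F'(\tfrac12) = 0$. A direct computation gives $F''(\tfrac12) = 2^{2-\varrho}\varrho(\varrho - \sqrt 2) < 0$, since $\varrho < \sqrt 2$, so $\tfrac12$ is a strict local maximum with $F(\tfrac12) = 1$. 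The analysis is then closed by showing that $F' $ has no interior zero on $(0,\tfrac12)$: writing $F'(\alpha) = \varrho\bigl[\alpha^{\varrho-1}(1 + (1-\alpha)^\varrho) - (1-\alpha)^{\varrho-1}(1 + \alpha^\varrho)\bigr]$, one reduces the sign question to a single-variable monotonicity statement about $t \mapsto t^{\varrho-1}/(1+t^\varrho)$, which can be handled by inspection of its derivative. Together with $F(0) = F(\tfrac12) = 1$, this forces $F \leq 1$ on $[0,\tfrac12]$, and symmetry extends it to $[0,1]$.

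Finally, for the equality assertion at $n = 2^l - 1$, I would choose $k = 2^{l-1} - 1$ in the recursion, so $k = n-k-1$ and by induction $s_{2^{l-1}-1} = 2^{\varrho(l-1)}$. The self-similarity $T(c,c) = 2^\varrho c$ then gives
\[
s_n \,\geq\, T(2^{\varrho(l-1)}, 2^{\varrho(l-1)}) \,=\, 2^{\varrho l} \,=\, (2^l)^\varrho \,=\, (n+1)^\varrho,
\]
which combined with the upper bound already proved yields equality.
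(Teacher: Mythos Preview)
Your reduction to the key inequality $(1+\alpha^\varrho)(1+(1-\alpha)^\varrho)\leq 2$ for $\alpha\in[0,1]$ is correct and is exactly the paper's route; it appears as Lemma~\ref{lemma:derivative} in the appendix. The equality case at $n=2^l-1$ via $T(c,c)=(1+\sqrt 2)c=2^\varrho c$ is likewise the paper's argument.

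The gap is in your proof of the key inequality itself. You claim that $F'$ has no interior zero on $(0,\tfrac12)$, but this is impossible: $F$ is $C^1$ on $[0,\tfrac12]$ with $F(0)=F(\tfrac12)=1$, so Rolle's theorem forces a zero of $F'$ in $(0,\tfrac12)$. Concretely, $F'(0)=-\varrho<0$, while your own second-derivative computation gives $F'>0$ just left of $\tfrac12$, so $F'$ must vanish somewhere in between. The proposed monotonicity of $g(t)=t^{\varrho-1}/(1+t^\varrho)$ cannot salvage this, because $g$ is \emph{not} monotone on $(0,1)$: $g'(t)$ has the sign of $(\varrho-1)-t^\varrho$, so $g$ increases on $\bigl(0,(\varrho-1)^{1/\varrho}\bigr)$ and decreases afterward, with the turning point $(\varrho-1)^{1/\varrho}\approx 0.36$ lying inside $(0,1)$. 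Indeed, if $g$ \emph{were} monotone, your own reduction would make $F'$ single-signed on $(0,\tfrac12)$, contradicting $F(0)=F(\tfrac12)$.

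What actually happens---and what the paper proves---is that $F$ first decreases from $F(0)=1$ to a minimum at some $a_0\in(0,\tfrac14)$ and then increases back to $F(\tfrac12)=1$, whence $F\leq\max\{F(0),F(\tfrac12)\}=1$. The paper obtains this single sign change of $F'$ by passing to $\xi(a)=a^{1-\varrho}-(1-a)^{1-\varrho}+2a-1$ (essentially the reciprocal of your $g$-difference), showing $\xi$ is convex on $(0,\tfrac12)$, and reading off the sign pattern from $\xi(0^+)=+\infty$, $\xi(\tfrac14)<0$, $\xi(\tfrac12)=0$. Your argument needs to be repaired along these lines: prove $F'$ has \emph{exactly one} zero on $(0,\tfrac12)$, not none.
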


\begin{proposition}[Lower Bound for $\mathbf 1^Th_\algA^{(n)}$] \label{them-convergence1.2} Define $\nu_l ={\displaystyle {\min_{2^l-1\leqslant n\leqslant 2^{l+1}-2}}}\frac{\mathbf 1^Th_\algA^{(n)}+1}{(n+2)^\varrho}$, where $l\in\N_0$. Then the sequence $\{\nu_l\}_{l=0}^\infty$ is  non-decreasing.
Consequently, given that $\nu_0 = \sqrt 2- 1$, we have
$\mathbf 1^Th_\algA^{(n)}+1\geqslant (\sqrt 2-1)(n+2)^\varrho$ for all $n\in\mathbb N_0$.
\end{proposition}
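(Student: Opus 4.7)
The approach is induction on $l$, showing that the pointwise bound
$a_n := \mathbf 1^T h_\algA^{(n)}+1 \ge \nu_l (n+2)^\varrho$ propagates from the old block
$\{2^l-1,\dots,2^{l+1}-2\}$, where it holds by the very definition of $\nu_l$, to the new block
$\{2^{l+1}-1,\dots,2^{l+2}-2\}$. Chaining $\nu_{l+1}\ge\nu_l$ together with the base value
$\nu_0 = a_0/2^\varrho = 1/(\sqrt{2}+1) = \sqrt{2}-1$ then delivers the stated bound for every $n$. The first step is to rewrite the recursion \eqref{prop-eq-recur-r1} in terms of $a_n$ as
\[
a_n \;=\; \max_{0\le k<n}\, w(a_k,a_{n-k-1}), \qquad w(x,y) := \frac{(x+y)+\sqrt{(x+y)^2+4xy}}{2},
\]
which is exactly the manipulation already carried out in the proof of \cref{them-convergence1}. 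Note that $w$ is positively $1$-homogeneous, coordinate-wise increasing, and satisfies $w(x,x)=2^\varrho x$.

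The single technical ingredient is the one-sided estimate
\[
w\bigl(m^\varrho,(m+1)^\varrho\bigr) \;\ge\; (2m)^\varrho \qquad \text{for every real } m\ge 1,
\]
which I would obtain from $w(m^\varrho,(m+1)^\varrho) = m^\varrho\cdot w(1,(1+1/m)^\varrho)$ by homogeneity, together with $w(1,1) = 1+\sqrt{2} = 2^\varrho$ and the fact that $s\mapsto w(1,s)$ is increasing on $\R_+$ (straightforward differentiation). This is the lower-bound counterpart of the upper bound $w(x,y)\le (x^{1/\varrho}+y^{1/\varrho})^\varrho$ already used for \cref{them-convergence1}.

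Setting $p = 2^l-1$, I would then fix $n$ in the new block $\{2p+1,\dots,4p+2\}$ and choose a near-balanced split $k$ in three sub-cases. If $n = 2j+1$ with $p\le j\le 2p$, take $k = n-k-1 = j$; monotonicity and $w(x,x)=2^\varrho x$ yield $a_n\ge \nu_l\cdot 2^\varrho(j+2)^\varrho = \nu_l(n+3)^\varrho$. If $n = 2j$ with $p+1\le j\le 2p$, take $k = j-1$ so $n-k-1 = j$; both indices lie in the old block, and the lemma with $m = j+1$ gives $a_n\ge \nu_l(2j+2)^\varrho = \nu_l(n+2)^\varrho$. These two sub-cases cover $n\in\{2p+1,\dots,4p+1\}$. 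The boundary case $n = 4p+2$ is exceptional: no split puts both indices in the old block, so I would take $k = 2p$ and $n-k-1 = 2p+1$, recycle the already-established odd case to get $a_{2p+1}\ge \nu_l(2p+3)^\varrho$, and apply the lemma at $m = 2p+2$ to conclude $a_{4p+2}\ge \nu_l\, w((2p+2)^\varrho,(2p+3)^\varrho)\ge \nu_l(4p+4)^\varrho$.

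The main obstacle is precisely this boundary point $n = 4p+2$, the only element of the new block that cannot be split into two summands both lying in the old block. Handling it forces the sub-induction inside the new block to consult itself, which imposes (a) an ordering of the sub-cases so that the first element $2p+1$ is certified before the last element $4p+2$ is attacked, and (b) the one-sided lower estimate on $w$ isolated above, since the upper bound already exploited in \cref{them-convergence1} points in the wrong direction. Everything else reduces to monotonicity and homogeneity of $w$.
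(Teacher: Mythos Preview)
Your argument is correct, but it is more elaborate than necessary, and the paper's proof illustrates why. The paper uses the same split $k=\lfloor(n-1)/2\rfloor$, $k'=\lceil(n-1)/2\rceil$, but instead of requiring \emph{both} indices to lie in the old block, it first observes that $\{a_n\}$ is strictly increasing (from $a_n\ge w(a_{n-1},a_0)>a_{n-1}$), and then simply drops the larger argument:
\[
a_n \;\ge\; w(a_k,a_{k'}) \;\ge\; w(a_k,a_k) \;=\; 2^\varrho a_k.
\]
Since $k+2\ge (n+2)/2$ for every $n$, this gives $\dfrac{a_n}{(n+2)^\varrho}\ge \dfrac{a_k}{(k+2)^\varrho}$ in one line, and $k=\lfloor(n-1)/2\rfloor$ always lands in the previous block. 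There is no parity split, no boundary case $n=4p+2$, and no need for the auxiliary inequality $w(m^\varrho,(m+1)^\varrho)\ge(2m)^\varrho$.

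What your approach buys is a slightly sharper intermediate estimate in the odd sub-case ($a_n\ge\nu_l(n+3)^\varrho$ rather than $\nu_l(n+2)^\varrho$), but this is never used, and the price is the three-way case analysis plus the self-referential handling of $n=4p+2$. The paper sidesteps all of that by exploiting monotonicity of the sequence $\{a_n\}$ itself rather than monotonicity of $w$ in its second argument applied to powers; this is the one idea you are missing that would let you collapse your proof to a few lines.
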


\begin{remark}\cbl\label{them-convergence1.3} 
Using Python's standard floating-point arithmetic, we numerically observe that $\nu_{16} > 0.9928$, where $\{\nu_l\}$ is defined as in~\cref{them-convergence1.2}. This suggests that
$\liminf_{n\to\infty} \mathbf 1^Th_\algA^{(n)}/n^\varrho> 0.9928$.
We state this as a numerical observation rather than a proven result. 
\end{remark}

\begin{figure}[h]
    \centering
    \includegraphics[width=\textwidth]{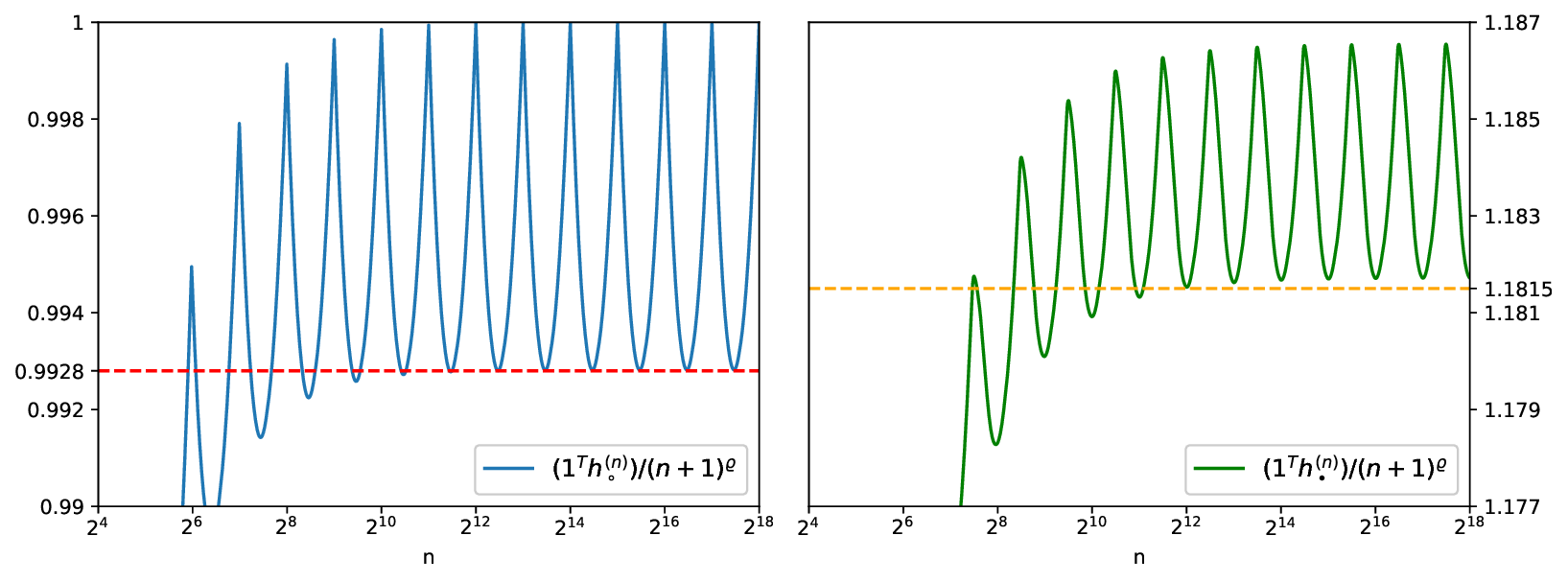}
    \vspace{-0.7cm}
    \caption{Plots of $(\mathbf 1^T h_\algA^{(n)})/(n+1)^\varrho$ (left) and $(\mathbf 1^T  h_\algB^{(n)})/(n+1)^\varrho$ (right) for $2^4\leqslant n\leqslant 2^{18}$,  and $h_\algA^{(n)}$ and $h_\algB^{(n)}$ are SSs defined in Definitions \ref{def-alg1} and \ref{def-alg2}, respectively. The x-axes are displayed in logarithmic scale.}
    \label{fig:stepsize}
\end{figure}

~\cref{them-convergence1} asserts that $\limsup_{n\to\infty} \mathbf{1}^T h_\algA^{(n)} / n^\varrho = 1$. However,  we remark that this does not imply $\mathbf{1}^T h_\algA^{(n)} / n^\varrho \to 1$ as $n \to \infty$. In~\cref{fig:stepsize} (left), we plot the numerical values of $\mathbf 1^Th_\algA^{(n)}/(n+1)^\varrho$. We observe an oscillating pattern in the ratio, suggesting potential non-convergence. Despite this,~\cref{them-convergence1.2} provides a tool to estimate the lower bound, leading to the observation that $\liminf_{n\to\infty} \mathbf{1}^T h_\algA^{(n)} / n^\varrho > 0.9928$ in~\cref{them-convergence1.3}.


%
%
%
%

\subsubsection{Convergence Rate for SS $h_\algB^{(n)}$}
We now bound $\mathbf 1^Th_\algB^{(n)}$, based on previous derived bounds for $\mathbf 1^Th_\algA^{(n)}$.

In this subsection, we use the convention that
\begin{equation}\label{eq:omega}
\omega=\max_{\mu\in (0,1)}\frac{2(1-\mu)^\varrho}{1-\mu^{\varrho/2}}.
\end{equation}
It follows that $\omega\approx 2.376373$ and $\omega\ge 2.376373$.
\begin{proposition}[Upper Bound for $\mathbf 1^Th_\algB^{(n)}$]\label{them-convergence2.1} It holds that $2(\mathbf 1^Th_\algB^{(n)})+1\leqslant \omega(n+1)^\varrho$, where $\omega$ is defined in~\eqref{eq:omega}.
\end{proposition}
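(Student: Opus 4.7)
The plan is strong induction on $n$ via \cref{prop-recur2}. The base case $n=0$ asserts $1\leq \omega$; indeed, letting $\mu\to 0^+$ in \eqref{eq:omega} already shows $\omega\geq 2$. For the inductive step, write $t_j\coloneqq \mathbf 1^Th_\algB^{(j)}$ and $s_k\coloneqq\mathbf 1^Th_\algA^{(k)}$, and assume $2t_j+1\leq \omega(j+1)^\varrho$ for all $j<n$. By \cref{prop-recur2}, it suffices to show for each $k\in\{0,\dotsc,n-1\}$ that
\begin{equation*}
2\bigl[s_k + \psi(s_k,t_{n-k-1}) + t_{n-k-1}\bigr] + 1 \leq \omega(n+1)^\varrho.
\end{equation*}

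First I would verify (using \eqref{eq:def-psi} plus AM--GM on the derivative in $y$) that $\psi(x,y)$ is non-decreasing in both $x$ and $y$, so the left side above is monotone in $s_k$ and $t_{n-k-1}$. Substituting the upper bounds $s_k+1\leq (k+1)^\varrho$ from \cref{them-convergence1} and $2t_{n-k-1}+1\leq \omega(n-k)^\varrho$ from the inductive hypothesis, and setting $A=(k+1)^\varrho$, $B=\omega(n-k)^\varrho$, a short algebraic simplification of $\psi(A-1,(B-1)/2)$ (where the $(B-1)/2$ contribution telescopes cleanly against the $-2y$ term in \eqref{eq:def-psi}) reduces the target to
\begin{equation*}
2A + \tfrac{B}{2} + \tfrac{1}{2}\sqrt{B(B+8A)} \leq \omega(n+1)^\varrho.
\end{equation*}
Dividing through by $(n+1)^\varrho$ and setting $\mu\coloneqq (n-k)/(n+1)\in(0,1)$, the inequality becomes
\begin{equation*}
2(1-\mu)^\varrho + \tfrac{\omega\mu^\varrho}{2} + \tfrac{1}{2}\sqrt{\omega\mu^\varrho\bigl(\omega\mu^\varrho + 8(1-\mu)^\varrho\bigr)} \leq \omega.
\end{equation*}

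Isolating the square root and squaring, the $\omega^2\mu^{2\varrho}/4$ and $2\omega\mu^\varrho(1-\mu)^\varrho$ cross terms cancel on both sides, collapsing the inequality to $\omega^2\mu^\varrho \leq \bigl(\omega-2(1-\mu)^\varrho\bigr)^2$; after taking square roots this rearranges to $\omega\geq 2(1-\mu)^\varrho/(1-\mu^{\varrho/2})$, which is exactly the defining inequality \eqref{eq:omega} of $\omega$. The inductive step is then complete.

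The main obstacle is justifying the squaring step, which requires $\omega-2(1-\mu)^\varrho-\omega\mu^\varrho/2\geq 0$. I would handle this with the elementary inequality $\mu^\varrho/2\leq \mu^{\varrho/2}$ on $\mu\in(0,1)$: it yields $\omega(1-\mu^\varrho/2)\geq \omega(1-\mu^{\varrho/2})\geq 2(1-\mu)^\varrho$, where the second step is again the defining inequality of $\omega$. This small observation is precisely where the particular exponent $\varrho/2$ in the denominator of \eqref{eq:omega} (as opposed to, say, $\varrho$) is calibrated to make the chain of bounds close tightly.
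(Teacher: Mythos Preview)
Your argument is correct and follows essentially the same route as the paper: the paper rewrites the recursion of \cref{prop-recur2} in terms of $z_n=2(\mathbf 1^Th_\algB^{(n)})+1$ to arrive at exactly your displayed inequality $2(1-\mu)^\varrho + \tfrac{\omega\mu^\varrho}{2} + \sqrt{2\omega\mu^\varrho(1-\mu)^\varrho + \tfrac14\omega^2\mu^{2\varrho}} \leq \omega$, and then invokes an appendix lemma (\cref{lemma:2.376373}) whose proof is the same isolate--square--collapse computation you carry out inline, including the same justification that the right-hand side is nonnegative before squaring.
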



\begin{proposition}[Lower Bound for $\mathbf 1^Th_\algB^{(n)}$] \label{them-convergence2.2} Let $\bar\nu_\algA := \liminf_{n\to\infty}  \mathbf 1^Th_\algA^{(n)}/ n^\varrho $ and $\bar\nu_\algB := \liminf_{n\to\infty}  \mathbf 1^Th_\algB^{(n)}/ n^\varrho $. Then $\bar\nu_\algB\geqslant \frac12\omega\bar\nu_\algA $, where $\omega$ is defined in~\eqref{eq:omega}.
\end{proposition}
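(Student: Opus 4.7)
The plan is to start from the recursion for $z_n := 2\mathbf{1}^Th_\algB^{(n)}+1$ displayed in the proof of \cref{them-convergence2.1},
\begin{equation*}
z_n \;\geq\; \frac{4(a_k+1)+z_{n-k-1}+\sqrt{z_{n-k-1}\bigl(z_{n-k-1}+8(a_k+1)\bigr)}}{2},\qquad 0\leq k<n,
\end{equation*}
where $a_k := \mathbf{1}^Th_\algA^{(k)}$. Setting $M:=\bar\nu_\algA$ and $L:=2\bar\nu_\algB$ (both finite by \cref{them-convergence1} and \cref{them-convergence2.1}, and positive by \cref{them-convergence1.2}), the target reduces to showing $L\geq \omega M$.

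Fix an arbitrary $\mu\in(0,1)$ and choose $k_n := \lfloor (1-\mu)n\rfloor$, so that $k_n/n\to 1-\mu$, $(n-k_n-1)/n\to \mu$, and both indices tend to infinity. For any $\epsilon>0$, the definition of $\liminf$ yields $a_{k_n}\geq (M-\epsilon)k_n^\varrho$ and $z_{n-k_n-1}\geq (L-\epsilon)(n-k_n-1)^\varrho$ for all sufficiently large $n$; since the right-hand side of the recursion is increasing in $z_{n-k-1}$, these lower bounds substitute directly. Dividing by $n^\varrho$, taking $\liminf$ on the left, and letting $\epsilon\downarrow 0$ yields
\begin{equation*}
L\;\geq\;\tfrac{1}{2}\Bigl[\,4M(1-\mu)^\varrho + L\mu^\varrho + \sqrt{L\mu^\varrho\bigl(L\mu^\varrho+8M(1-\mu)^\varrho\bigr)}\,\Bigr].
\end{equation*}

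The heart of the argument is to extract $L$ from this implicit inequality. Rearranging gives $L(2-\mu^\varrho)-4M(1-\mu)^\varrho \geq \sqrt{L\mu^\varrho(L\mu^\varrho+8M(1-\mu)^\varrho)}\geq 0$, which first produces the preliminary bound $L\geq 4M(1-\mu)^\varrho/(2-\mu^\varrho)$. Squaring and simplifying reduces to the quadratic inequality
\begin{equation*}
L^2(1-\mu^\varrho) - 4LM(1-\mu)^\varrho + 4M^2(1-\mu)^{2\varrho}\;\geq\;0,
\end{equation*}
whose two real roots factor cleanly as $L_{\pm}=2M(1-\mu)^\varrho/(1\mp\mu^{\varrho/2})$. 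A direct comparison $4/(2-\mu^\varrho) > 2/(1+\mu^{\varrho/2})$ (equivalent to $4\mu^{\varrho/2}+2\mu^\varrho>0$) shows that the preliminary bound already strictly exceeds the smaller root $L_-$, so the admissible region for $L$ must be $L\geq L_+ = 2M(1-\mu)^\varrho/(1-\mu^{\varrho/2})$.

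Since $\mu\in(0,1)$ was arbitrary, maximizing the right-hand side over $\mu$ gives $L\geq \omega M$ by the definition of $\omega$ in \eqref{eq:omega}, which is exactly $\bar\nu_\algB\geq \tfrac12\omega\bar\nu_\algA$. The main obstacle I anticipate is precisely this quadratic-style manipulation: because $L$ appears both linearly and inside a square root on the right, one has to isolate the radical, exploit its non-negativity to get an a priori lower bound, square, and then use that a priori bound to discard the spurious smaller root of the resulting quadratic while carefully tracking the sign conditions.
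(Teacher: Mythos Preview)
Your proof is correct and follows essentially the same route as the paper: start from the recursion for $z_n$, fix $\mu\in(0,1)$, choose $k\approx(1-\mu)n$, pass to the limit to obtain the implicit inequality in $L=2\bar\nu_\algB$ and $M=\bar\nu_\algA$, and then solve it. The only cosmetic difference is that the paper packages your quadratic manipulation (isolate the radical, square, discard the spurious root using the preliminary bound) into a separate technical lemma (\cref{lemma:2.376373}), applied to $w=L/M$, whereas you carry it out inline; your $\epsilon$-argument for the $\liminf$ step is also a bit more explicit than the paper's direct passage to the limit.
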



Now we give an estimate of the objective bound for SS $h_\algB^{(n)}$, which exhibits state-of-the-art convergence rate.
\begin{theorem}[Objective bound for $h_\algB^{(n)}$] 
\label{them-convergence2.29} 
Let $\bar\nu_\algB:=\liminf_{n\to\infty}\mathbf 1^Th_\algB^{(n)}/n^\varrho$.  Gradient descent~\eqref{eq:gd} with $h_\algB^{(n)}$ enjoys an asymptotic bound of
 \begin{equation}\label{eq:asym-bound1}
     f_n-f_\star\leqslant ((2\bar\nu_\algB)^{-1}+o(1))\cdot n^{-\varrho}\cdot \frac L2\Vert\mathbf x_0-\mathbf x_\star\Vert^2,
 \end{equation} 
where $\varrho =\log_2(\sqrt 2+1)$.
\end{theorem}
\begin{proof}This is a direct corollary of~\cref{them-alg2}.
\end{proof}

\begin{remark}\label{them-convergence2.3}
    Based on the numerical observation in~\cref{them-convergence1.3} 
and combining it with~\cref{them-convergence2.2}, we estimate that
$\bar\nu_{\algB} \geqslant \frac{0.9928 \times 2.376373}{2} > 1.1796$,
and thus $(2\bar\nu_{\algB})^{-1} < 0.4239,$
where $\bar\nu_{\algB}$ is defined in~\cref{them-convergence2.29}.
We emphasize that this bound is contingent upon the numerical evidence in~\cref{them-convergence1.3}.
\end{remark}

\begin{remark}\label{remark:silver}
Prior to our work, Altschuler and Parrilo~\cite{silver2} presented the bound $f_n-f_\star\leqslant (1+o(1))\cdot n^{-\varrho} \cdot \frac{L}{2} \Vert \mathbf{x}_0 -\mathbf x_\star\Vert^2$, while Grimmer et al.~\cite{grimmer2024acceleratedobjective} presented $f_n-f_\star \leqslant (0.4302 + o(1))\cdot n^{-\varrho} \cdot \frac{L}{2} \Vert \mathbf{x}_0-\mathbf x_\star \Vert^2$. Both bounds are restricted to $n = 2^l - 1$, {$l\in\N_0$}.\footnote{We remark that using a doubling trick or tracking the best obtained objective (see the last paragraph of Section 1 in~\cite{kornowski2024open}), one can show that the bound  $O(n^{-\varrho})$  holds for every $n$.  However, this approach will worsen the constant factor in the bound.}~\cref{them-convergence2.3} suggests that our SS $h_{\algB}^{(n)}$ not only improves the existing best bound  by a constant factor $\frac{0.4302}{0.4239}\approx 1.015$,  but also applies to every $n\in\mathbb N_0$.
Moreover, our results reduce to~\cite{silver2} and~\cite{grimmer2024acceleratedobjective} with certain use of our concatenation techniques.
The last paragraph of the proof of~\cref{them-convergence1} in~\cref{appendix:asymptotic} shows that $\mathbf 1^Th_\algA^{(n)}+1=(n+1)^\varrho$ for $n=2^l - 1$ is attained via an inductive concatenation  $h_\algA^{(2^l-1)}=\Call{ConPP}{h_\algA^{(2^{l-1}-1)},h_\algA^{(2^{l-1}-1)}}$, which is exactly the silver SS established in~\cite{silver2}.
Further, if we define a new recursion  $h^{(2^l-1)} = \Call{ConPD}{h_\algA^{(2^{l-1} - 1)}, h^{(2^{l-1}-1)}}$ where $h^{(0)}=[~]^T$, then it is the regime in~\cite{grimmer2024acceleratedobjective}. The proposed SS $h_\algB^{(n)}$ in~\cref{def-alg2}, having employed a maximum operator to seek for the optimal concatenation, is expected to achieve better performance.

\end{remark}

\begin{remark}
The constant $0.4239$ in~\cref{them-convergence2.3} may not be the optimal constant. As shown in~\cref{fig:stepsize} (right), numerical evidence suggests a tighter estimate of $\liminf_{n\to\infty} \mathbf{1}^T h_{\algB}^{(n)} / n^\varrho$ ($>1.1815$), thus improving the statement  in~\cref{them-convergence2.3}, according to~\cref{them-alg2}. 
{\cbl
If proven, then $0.4239$ could be replaced by $0.4232$.\footnote{A concurrent work by Grimmer et al.~\cite{grimmercomposing} in October 2024 showed by a tighter estimate that $(2\bar\nu_\algB)^{-1}\in [0.4208, 0.42312]$.}}
\end{remark}

\subsection{Non-uniqueness}\label{sect:nonunique}

Both Algorithms~\ref{alg1} and \ref{alg2} employ dynamic programming to identify the best index $k$ for concatenating  SSs of iterations $k$ and $n-k-1$, given iteration number $n$. However, it is important to note that such best index $k$ might not be unique. Since the definition of $\alpha$ in~\CPP\  is symmetric with respect to the two primitive SSs $h_a$ and $h_b$, swapping $h_a$ and $h_b$ does not alter the value of $\mathbf 1^Th_c$. 
As an example, concatenating SSs $h_\algA^{(1)}=[\sqrt 2]^T$ and $h_\algA^{(0)}=[~]^T$ yields $h_\algA^{(2)} =\Call{ConPP}{h_\algA^{(1)},h_\algA^{(0)}}= [\sqrt 2,({\sqrt{10+8\sqrt 2}-\sqrt 2})/2]^T$. Meanwhile,  concatenation in 
a reverse manner leads to  $\Call{ConPP}{h_\algA^{(0)},h_\algA^{(1)}}=[({\sqrt{10+8\sqrt 2}-\sqrt 2})/2,\sqrt 2]^T$, equivalent in the sense of  $\mathbf 1^Th$. More nontrivial examples beyond the ``swapping'' exist. For instance, one can verify that $\mathbf 1^Th_\algA^{(5)}=\mathbf 1^T\Call{ConPP}{h_\algA^{(3)}, h_\algA^{(1)}}=\mathbf 1^T\Call{ConPP}{h_\algA^{(2)}, h_\algA^{(2)}}=\mathbf 1^T\Call{ConPP}{h_\algA^{(1)}, h_\algA^{(3)}}$. Therefore, there are three different possibilities for $h_\algA^{(5)}$ in Definition \ref{def-alg1}. Based on the above simple examples and our numerical tests, we conjecture the optimal index has the following coincidence.
\begin{conjecture}\label{conj:pridp}For each $n\in\N$, it holds that
\begin{equation*}
   \mathbf 1^Th_\algA^{(n)} =  \mathbf 1^T\Call{ConPP}{h_\algA^{\lfloor (n-1)/2\rfloor}, h_\algA^{\lceil (n-1)/2\rceil}}.
\end{equation*}
Moreover, if $n = p\cdot 2^l - 1$ where $p>1$ is odd and $l\in\N$, we additionally have
\begin{equation*}
    \mathbf 1^Th_\algA^{(n)} = \mathbf 1^T\Call{ConPP}{h_\algA^{((p+i)\cdot 2^{l-1}-1)}, h_\algA^{((p-i)\cdot 2^{l-1}-1)}},~i\in\{-1,0,1\}.
\end{equation*}
\end{conjecture}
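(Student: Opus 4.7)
The plan is to proceed by strong induction on $n$, reducing the second statement to the first by pure algebra and treating the first as the main combinatorial hurdle. Set $a_n:=\mathbf 1^T h_\algA^{(n)}+1$ and $F_n(k):=w(a_k,a_{n-k-1})$, where $w(x,y):=\frac{(x+y)+\sqrt{(x+y)^2+4xy}}{2}$ is the positive root of $w^2-(x+y)w-xy=0$. Then $w$ is symmetric, strictly increasing in each argument, positively homogeneous of degree one, and satisfies $w(x,x)=(1+\sqrt 2)\,x$. The recursion \eqref{prop-eq-recur-r1} rewrites as $a_n=\max_{0\le k<n}F_n(k)$, and $F_n(k)=F_n(n-k-1)$ is automatically symmetric about $(n-1)/2$. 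Assuming the conjecture holds for all indices smaller than $n$, the first statement applied to indices $2m+1$ and $2m$ gives the \emph{odd identity} $a_{2m+1}=(1+\sqrt 2)\,a_m$ and the \emph{even identity} $a_{2m}=w(a_{m-1},a_m)$; iterating the odd identity yields the closed form $a_{q\cdot 2^s-1}=(1+\sqrt 2)^s a_{q-1}$ for all admissible $q\ge 1$ and $s\ge 0$.

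Using these identities, the second statement follows from the first (applied to $n$ itself) by a one-line homogeneity computation. For $n=p\cdot 2^l-1$ with odd $p>1$ and $l\ge 1$, set $m:=(p-1)/2$, so the iterated odd identity gives
\begin{equation*}
a_{(p-1)\cdot 2^{l-1}-1}=(1+\sqrt 2)^{l-1}a_{p-2},\qquad
a_{p\cdot 2^{l-1}-1}=(1+\sqrt 2)^{l-1}a_{p-1},\qquad
a_{(p+1)\cdot 2^{l-1}-1}=(1+\sqrt 2)^{l-1}a_{p}.
\end{equation*}
Homogeneity of $w$, the odd identity for $a_{p-2}=a_{2m-1}$ and $a_p=a_{2m+1}$, and the even identity for $a_{p-1}=a_{2m}$ combine to give
\begin{equation*}
F_n\!\bigl((p-1)\cdot 2^{l-1}-1\bigr)=(1+\sqrt 2)^{l-1}w(a_{2m-1},a_{2m+1})=(1+\sqrt 2)^{l}w(a_{m-1},a_m)=(1+\sqrt 2)^{l}a_{p-1}.
\end{equation*}
The central value $F_n(p\cdot 2^{l-1}-1)=w(x,x)=(1+\sqrt 2)(1+\sqrt 2)^{l-1}a_{p-1}$ equals the same quantity, and by the symmetry $F_n(k)=F_n(n-k-1)$ so does $F_n((p+1)\cdot 2^{l-1}-1)$. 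Hence all three candidate indices realize the maximum $a_n=(1+\sqrt 2)^l a_{p-1}$.

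The main obstacle is therefore the first statement: to prove that $F_n$ is unimodal in $k$ with peak at $\lfloor(n-1)/2\rfloor$. By the symmetry of $F_n$ this reduces to
\begin{equation*}
F_n(k+1)\;\ge\;F_n(k)\quad\text{for all }0\le k<\tfrac{n-1}{2}.
\end{equation*}
Implicit differentiation of $w^2-(x+y)w-xy=0$ gives $dw=\frac{(w+y)\,dx+(w+x)\,dy}{2w-(x+y)}$, so the desired sign is that of $(w+a_{n-k-1})(a_{k+1}-a_k)-(w+a_{k+1})(a_{n-k-1}-a_{n-k-2})$. Unfortunately \cref{fig:stepsize} shows that the normalized sequence $a_n/(n+1)^\varrho$ oscillates in a fractal, $2$-adic fashion, so neither the first differences $a_{k+1}-a_k$ nor the ratios of consecutive $a$'s are monotone in $k$, and a direct sign comparison does not close. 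I would therefore try to reframe the inequality as a Schur-type majorization using \cref{them-convergence1}: since $w(x,y)\le(x^{1/\varrho}+y^{1/\varrho})^\varrho$ with equality iff $x=y$, the level curves of $w$ bend towards the diagonal, so moving $(a_k,a_{n-k-1})$ ``inward'' along $k+(n-k-1)=n-1$ should increase $w$. Verifying that each step $k\to k+1$ really does move inward in the required Schur sense appears to demand a delicate induction tracking the $2$-adic valuations of $k+1$ and $n-k$ and the location of their most significant differing bit; this is exactly the technical content that makes the statement genuinely a conjecture rather than a theorem.
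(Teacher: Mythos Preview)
The statement you are attempting to prove is presented in the paper as a \emph{conjecture}, not a theorem: the authors write that it is ``based on the above simple examples and our numerical tests,'' and they supply no proof whatsoever. Consequently there is no paper proof against which to compare your proposal.

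That said, your analysis is sound as far as it goes. Your reformulation via $a_n=\mathbf 1^Th_\algA^{(n)}+1$ and $w(x,y)$ is correct and matches the paper's own use of this substitution in the proof of \cref{them-convergence1}. Your reduction of the second assertion to the first is valid: the odd identity $a_{2m+1}=(1+\sqrt 2)a_m$ and the even identity $a_{2m}=w(a_{m-1},a_m)$ are exactly what the first assertion says at odd and even indices, and the homogeneity computation showing $F_n((p\pm 1)\cdot 2^{l-1}-1)=F_n(p\cdot 2^{l-1}-1)$ is clean and correct. All auxiliary indices invoked lie strictly below $n$ (since $p\geqslant 3$ forces $(p+1)\cdot 2^{l-1}-1<p\cdot 2^l-1$), so the strong-induction bookkeeping is in order.

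Where you stop---the unimodality of $k\mapsto F_n(k)$---is precisely where the paper stops too. Your observation that the oscillation in \cref{fig:stepsize} obstructs a naive monotonicity argument is apt, and your closing sentence correctly identifies this as the open content of the conjecture. In short: you have not proved the conjecture, but neither has the paper; your partial progress (the reduction of the second part to the first) is genuine and not contained in the paper.
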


{We numerically verified~\cref{conj:pridp} for all $n\leq 10^4$, with the absolute difference between the two sides of the conjectured equality not exceeding $6\times 10^{-11}$. The script is available at~\colorurl{https://github.com/ForeverHaibara/concat-stepsize/tree/main}. A direct consequence of~\cref{conj:pridp} is that it boosts \textproc{priDP}~(\cref{alg1}) to $O(n)$ time complexity, assuming the conjecture holds. We have also conducted extensive numerical experiments to search for analogous patterns in the optimal pivot selection of \textproc{domDP}~(\cref{alg2}). However, unlike the \textproc{priDP} case, the optimal index $k$ for \textproc{domDP} does not exhibit a clear and simple structure across different values of $n$. Consequently, we are unable to formulate a similarly concise conjecture for \textproc{domDP}.}

Given the non-uniqueness of optimal SSs across iterations within $H_\algA$, SSs in $H_\algB$ are also not unique. For instance, for $n=4$ case, it can be verified that $h_\algB^{(4)}=\Call{ConPD}{h_\algA^{(2)}, h_\algB^{(1)}}$ is the optimal concatenation. However, as we have demonstrated above, there are two possible values for $h_\algA^{(2)}$ with identical $\mathbf 1^Th_\algA^{(2)}$. Thus, both  schedules
  $[ \sqrt 2, ({\sqrt{10+8\sqrt 2}-\sqrt 2})/{2},\beta,3/2]^T$ and $[  ({\sqrt{10+8\sqrt 2}-\sqrt 2})/{2},\sqrt 2,\beta,3/2]^T$ enjoy the bound $f_4-f_\star\leqslant 0.06234\cdot \frac L2\Vert\mathbf x_0-\mathbf x_\star\Vert^2$,  where $\beta\approx 3.005$.

 Although multiple optimal choices for $h_\algA^{(n)}$ or $h_\algB^{(n)}$ may exist, values of $\mathbf 1^Th_\algA^{(n)}$ and $\mathbf 1^Th_\algB^{(n)}$ are unaffected by this variability, as ensured by Propositions \ref{prop-recur1} and \ref{prop-recur2}. Therefore, the convergence bounds of Propositions \ref{them-alg1} and \ref{them-alg2} do not depend on the choice of the maximizers.

\subsection{Dynamic SS for Reducing Objective Values}\label{sect:dynamic}

In Definition \ref{def-alg2}, we have constructed SSs $h_\algB^{(n)}$ with notable performance,
but they require the iteration count $n$ to be specified in advance. Teboulle and Vaisbourd~\cite{teboulle2023elementary}  introduced a dynamic stepsize sequence that evolves by incrementally appending a single step to the end of the sequence at each iteration. We will show that our \texttt{ConPP} technique generalizes the process by permitting the addition of multiple steps simultaneously, thus accelerating the convergence rate.


\begin{proposition}[Dynamic Stepsize 1]\label{them-dynamic} Suppose $h^{(0)}\in\mathbb R_+^{n_0}$ and $h_b\in\mathbb R_+^{m}$ are both primitive SSs. Define the recursion $h^{(k)} = \Call{ConPP}{h^{(k-1)}, h_b}\in \mathbb R_+^{n_k}$ for $k\in\N$. Then $n_k= n_0+k(m+1)$ and $\lim_{k\to\infty}\mathbf 1^Th^{(k)} /n_k =2(\mathbf 1^Th_b+1)/(m+1)$. Gradient descent~\eqref{eq:gd} with stepsize sequence $h^{(k)}$  enjoys the bound
\begin{equation}\label{eq:ds1}
f_{n_k}-f_\star\leqslant \frac{1}{2(\mathbf 1^Th^{(k)})+1}\cdot \frac L2\Vert\mathbf x_0-\mathbf x_\star\Vert^2\simeq \frac{m+1}{4n_k(\mathbf 1^Th_b+1)}\cdot \frac L2\Vert\mathbf x_0-\mathbf x_\star\Vert^2,
\end{equation}
where $\simeq$ denotes equality up to lower-order terms. In particular, if $h_b=h_\algA^{(m)}$,  we have
\begin{equation}\label{eq:ds11}
f_{n_k}-f_\star\leqslant
\left(\frac{m+1}{4(\sqrt 2-1)(m+2)^{\varrho}}+o(1)\right)\cdot \frac{1}{n_k}\cdot \frac L2\Vert\mathbf x_0-\mathbf x_\star\Vert^2.
\end{equation}
\end{proposition}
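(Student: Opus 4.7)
The plan is to reduce the statement to the concatenation machinery already established and then perform an asymptotic analysis of the scalar recursion generated by \texttt{ConPP}. First, I would verify that every $h^{(k)}$ is primitive and that $n_k=n_0+k(m+1)$. Both are immediate inductive consequences of \CTP: the procedure $\Call{ConPP}{h^{(k-1)},h_b}$ outputs a primitive stepsize schedule of length $n_{k-1}+1+m$, so the length identity follows and $h^{(k)}\in\R_+^{n_k}$ is primitive for every $k\in\N$.

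Next, let $s_k=\mathbf 1^Th^{(k)}$ and $b=\mathbf 1^Th_b$. The definition of \texttt{ConPP} gives the scalar recursion
\begin{equation*}
s_k \;=\; s_{k-1} \;+\; \varphi(s_{k-1},b) \;+\; b,
\end{equation*}
with $\varphi$ as in \eqref{eq:def-varphi}. The key technical step is to compute $\lim_{x\to\infty}\varphi(x,b)$. Expanding the radicand,
\begin{equation*}
(x+b+2)^2+4(x+1)(b+1) \;=\; x^2+(6b+8)x+O(1),
\end{equation*}
so $\sqrt{(x+b+2)^2+4(x+1)(b+1)}=x+(3b+4)+O(1/x)$ as $x\to\infty$. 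Substituting into \eqref{eq:def-varphi} yields $\varphi(x,b)=b+2+O(1/x)$. Since $s_{k-1}\to\infty$ (it is strictly increasing in $k$), this gives $s_k-s_{k-1}\to 2(b+1)$, and the Stolz--Ces\`aro theorem produces $s_k/k\to 2(b+1)$. Combined with $n_k=n_0+k(m+1)$, we obtain
\begin{equation*}
\lim_{k\to\infty}\frac{s_k}{n_k} \;=\; \frac{2(\mathbf 1^Th_b+1)}{m+1},
\end{equation*}
which is the claimed limit.

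Since each $h^{(k)}$ is primitive, \cref{coro:bound-primitive} directly delivers the left equality in \eqref{eq:ds1}. The asymptotic equivalence follows by substituting $2s_k+1\simeq \tfrac{4n_k(\mathbf 1^Th_b+1)}{m+1}$, where the error is of lower order than the leading $n_k$ term. Finally, for the specialization $h_b=h_\algA^{(m)}$, I would invoke \cref{them-convergence1.2}, which gives the non-asymptotic inequality $\mathbf 1^Th_\algA^{(m)}+1\geq (\sqrt 2-1)(m+2)^\varrho$; dividing into the leading coefficient of \eqref{eq:ds1} then yields \eqref{eq:ds11}.

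The main technical obstacle is the expansion of $\varphi(x,b)$ for large $x$ with the correct next-order term, since that is what determines the rate at which $s_k-s_{k-1}$ approaches its limit and hence controls the $o(1)$ term in the final bound. However, because $\varphi$ has a closed form, this is a routine Taylor expansion of a square root rather than a delicate analytic argument, and the rest of the proof is mechanical bookkeeping using the already-established primitiveness of \texttt{ConPP} outputs and the lower bound of \cref{them-convergence1.2}.
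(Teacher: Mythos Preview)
Your proposal is correct and follows essentially the same route as the paper: establish primitiveness and the length formula by induction on \CTP, derive the scalar recursion $s_k=s_{k-1}+\varphi(s_{k-1},b)+b$, show $s_k-s_{k-1}\to 2(b+1)$, apply Stolz--Ces\`aro, and finish with \cref{coro:bound-primitive} and \cref{them-convergence1.2}. The only cosmetic difference is that the paper rationalizes the difference $-s_{k-1}+b+\sqrt{(\cdots)}$ directly while you Taylor-expand the square root; either way one should note (as the paper does via $\varphi>1$) that $s_k-s_{k-1}\geq 1+b$ so that $s_k\to\infty$ is actually justified, not merely inferred from strict monotonicity.
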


\begin{proof}
    It follows from~\CPP~that $h^{(k+1)} = [(h^{(k)})^T, \alpha^{(k)}, h_b^T]^T$, where $\alpha^{(k)}$ equals to $\varphi(\mathbf 1^Th^{(k)}, \mathbf 1^Th_b)$ and $\varphi(\cdot,\cdot)$ is given by~\eqref{eq:def-varphi}.
This implies that  $h^{(k)}\in\mathbb R^{n_0+k(m+1)}$, and the recursive formula 
\begin{equation}
\label{eq:dynamic1}
\begin{aligned}
    \mathbf 1^Th^{(k+1)} &= \mathbf 1^Th^{(k)} + \alpha^{(k)} + \mathbf 1^Th_b
\\ & = \frac{\mathbf 1^Th^{(k)}+\mathbf 1^Th_b+\sqrt{(\mathbf 1^Th^{(k)}+ \mathbf 1^Th_b+2)^2+4(\mathbf 1^Th^{(k)}+1)(\mathbf 1^Th_b+1)}}{2}.
\end{aligned}
\end{equation}
Equation~\eqref{eq:dynamic1} 
shows $\mathbf 1^Th^{(k+1)}=\mathbf 1^Th^{(k)}+2(\mathbf 1^Th_b+1)+o(1)$ as $k\to\infty$. Applying the Stolz-Ces\`{a}ro theorem, we get $ \lim_{k\to\infty}  {\mathbf 1^Th^{(k)}}/{k}=2(\mathbf 1^Th_b+1)$.
This, together with  $n_k=n_0+k(m+1)$, implies $\lim_{k\to\infty}\mathbf 1^Th^{(k)} /n_k =2(\mathbf 1^Th_b+1)/(m+1)$ as desired.
Equation~\eqref{eq:ds1} then follows directly from~\cref{coro:bound-primitive}.
Equation~\eqref{eq:ds11} follows from~\eqref{eq:ds1} and~\cref{them-convergence1.2}.
\end{proof}

\begin{remark}\label{remark:teboulle}
    Let $h^{(0)}=h_b=[~]^T\in\mathbb R^0$ be the zero-iteration stepsize introduced in Example \ref{example-zero}. Then $h^{(k)}\in\mathbb R^{k}$ in~\cref{them-dynamic} is a dynamic sequence extended by a single step at each concatenation. It has the recursion
    \[
    h_{n}=\frac{-\mathbf 1^Th^{(n)}+\sqrt{(\mathbf 1^Th^{(n)})^2+8(\mathbf 1^Th^{(n)})+8}}{2},~n\in\N_0,
    \]
    where $\mathbf 1^Th^{(0)}=0$ and $\mathbf 1^Th^{(n)} = \sum_{j=0}^{n-1}h_j$ for $n\geqslant 1$. This
    is exactly the recursive sequence proposed by Teboulle and Vaisbourd in~\cite[Theorem 4]{teboulle2023elementary}, enjoying  the bound $f_{n_k}-f_\star\leqslant \left(\frac14+o(1)\right)\cdot n_k^{-1}\cdot \frac L2 \Vert\mathbf x_0-\mathbf x_\star\Vert^2$.~\cref{eq:ds1} in~\cref{them-dynamic} indicates that the proposed dynamic stepsize improves this result by an arbitrary constant factor by properly selecting $h_b$. For example, taking $h_b=h_\algA^{(1)}=[\sqrt 2]^T\in\mathbb R^1$ yields a dynamic sequence with the bound $f_{n_k}-f_\star\leqslant \left(\frac{1}{2(\sqrt 2+1)}+o(1)\right)\cdot n_k^{-1}\cdot\frac L2\Vert\mathbf x_0-\mathbf x_\star\Vert^2$.~\cref{eq:ds11} shows that if \( h_b \) is generated by~\cref{def-alg1}, then a larger value of \( m \) leads to a better asymptotic bound. We also note that the sequence exhibits a nearly periodic pattern: {starting from iteration \( n_0 + 1 \), every \( m + 1 \) steps of the dynamic sequence consist of a long step  \( \alpha^{(k)} \) followed by the \( m \) steps of \( h_b \)}.
\end{remark}

\section{SSs Minimizing Gradient Norms}\label{sect:H-duality}
{In this section, we introduce a family of SSs tailored to minimizing the gradient norm of the last iterate.} 
The main workhorse is a new type of concatenation, \texttt{ConGP}, which constructs SSs that adhere to these bounds. Similar to the insights in~\cite{grimmer2024acceleratedobjective}, we highlight a universal ``dual'' relationship between \texttt{ConGP} and the previously discussed \texttt{ConPD}. {A similar phenomenon, referred to as H-duality, was studied earlier in the literature~\cite{kim2021ogm-g, kim2024time}.} To begin, we introduce a new class of SSs.

\begin{definition}[G-bounded SS] \label{def-dual} We say an SS $h\in\mathbb R_+^n$ is \emph{g(radient)-bounded} if it satisfies
\begin{equation}\label{eq:gbd}
    f_0  - f_n\geqslant (\mathbf 1^Th)\cdot \frac{1}{L}\Vert\mathbf g_n\Vert^2,~\forall (f,\mathbf x_0)\in \mathcal C^{1,1}_L\times \mathbb R^d.
\end{equation}

\end{definition}

\begin{theorem}[Upper bound of \dual SS]
    \label{ineq-h-duality} If an SS $h\in\mathbb R_+^n$ is \dual, then it satisfies
    \[
    \frac{1}{2L}\Vert\mathbf g_n\Vert^2\leqslant \frac{1}{2(\mathbf 1^Th)+1}\cdot (f_0-f_\star).
    \]
    The equality is attained when $\Vert\mathbf x_0 \Vert = 1$ and $f$ is the Huber function defined by
    \begin{equation}\label{eq:huber-grad}
        f(\mathbf x)= \left\{\begin{array}{ll}
        \frac Lw\Vert \mathbf x\Vert-\frac{L}{2w^2}, &  \Vert\mathbf x \Vert\geqslant \frac1w,
        \\ \frac L2\Vert\mathbf x \Vert^2, & \Vert \mathbf x \Vert<\frac1w,\end{array}\right.
    \end{equation}
    where $w =  (\mathbf 1^Th)+1$.\footnote{Note that the selection of $w$ here is slightly different from that of~\cref{them-dominate}.}
\end{theorem}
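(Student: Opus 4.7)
The plan is to establish the bound by combining the g-boundedness assumption with a standard smooth convex inequality from \cref{lemma:Qij}, and then to verify tightness by direct computation on the Huber function in \eqref{eq:huber-grad}.

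For the inequality, let $r = \mathbf{1}^T h$. First, the g-boundedness of $h$ gives, by \cref{def-dual},
\begin{equation*}
  f_0 - f_n \geqslant \frac{r}{L}\Vert\mathbf g_n\Vert^2.
\end{equation*}
Second, the inequality $Q_{*,n}\leq 0$ from \cref{lemma:Qij} (see \eqref{Qij2}) yields
\begin{equation*}
  f_n - f_* \geqslant \frac{1}{2L}\Vert\mathbf g_n\Vert^2.
\end{equation*}
Adding these two inequalities and rearranging the factor $(2r+1)/(2L)$ produces exactly the desired bound
\begin{equation*}
   \frac{1}{2L}\Vert\mathbf g_n\Vert^2 \leqslant \frac{1}{2(\mathbf 1^Th)+1}\cdot (f_0 - f_*).
\end{equation*}

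For the tightness part, I would proceed in the same spirit as the Huber-function analysis in the proof of \cref{them-dominate}: set $w = r+1$, take $\mathbf x_0$ a unit vector and $f$ as in \eqref{eq:huber-grad}, so that $\mathbf x_* = \mathbf 0$ and $f_* = 0$, and $f\in\mathcal C^{1,1}_L$. Since $\Vert\mathbf x_0\Vert = 1 \geqslant 1/w$, the gradient is $\nabla f(\mathbf x_0) = (L/w)\mathbf x_0$, which is co-linear with $\mathbf x_0$. A straightforward induction (as long as the iterates remain in the outer regime) shows
\begin{equation*}
  \mathbf x_i = \Bigl(1 - \frac{1}{w}\sum_{j=0}^{i-1} h_j\Bigr)\mathbf x_0,\qquad i = 1,\dotsc,n,
\end{equation*}
and at $i = n$ one gets $\mathbf x_n = (1/w)\mathbf x_0$, which lies exactly on the boundary $\Vert\mathbf x_n\Vert = 1/w$ where both branches of $f$ coincide. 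Consequently $\mathbf g_n = (L/w)\mathbf x_0$ and $f_n = L/(2w^2)$, while $f_0 = L/w - L/(2w^2) = L(2w-1)/(2w^2)$. Plugging $w = r+1$ shows both that $f_0 - f_* = L(2r+1)/(2(r+1)^2)$ equals $(2r+1)\cdot \Vert\mathbf g_n\Vert^2/(2L)$ (so the main bound is attained with equality), and, as a sanity check, that $f_0 - f_n = Lr/(r+1)^2 = (r/L)\Vert\mathbf g_n\Vert^2$ (so the g-boundedness constraint is active, which is required for the example to be meaningful).

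There is no substantive obstacle here: the inequality part is a two-line addition, and the tightness part is a direct evaluation mirroring the calculation in \cref{them-dominate}. The only point to be careful about is the choice $w = r + 1$ rather than $w = 2r+1$ (as emphasized in the footnote of the statement), which is dictated by requiring the iterates generated by $h$ on this Huber function to land precisely at the kink after $n$ steps.
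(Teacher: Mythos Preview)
Your proposal is correct and follows essentially the same approach as the paper: the bound is obtained by adding the g-boundedness inequality to $Q_{*,n}\leqslant 0$, and tightness is verified by the same Huber-function computation with $w=\mathbf 1^Th+1$. The extra sanity check that the g-boundedness constraint is active is a nice touch but not present in the paper's version.
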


\begin{proof}
    Recall that $0\geqslant Q_{\star,n} =f_\star -f_n +\frac{1}{2L}\Vert\mathbf g_n\Vert^2$ by~\cref{lemma:Qij}. This, together with~\eqref{eq:gbd}, yields $f_0-f_\star \geqslant \left((\mathbf 1^Th)+\frac{1}{2}\right)\cdot \frac{1}{L}\Vert\mathbf g_n\Vert^2$, leading to the desired bound.

    Next we demonstrate that the equality is attained when $f$ is defined in~\eqref{eq:huber-grad}. It is easy to verify $f\in\mathcal C^{1,1}_L$~\cite[Theorem 6.60]{beck2017first} and has minimizer $\mathbf x_\star = \mathbf 0$. Since $\mathbf x_0 $ is assumed to be a unit vector, a simple induction gives that $\mathbf x_i = \mathbf x_0 -\frac1w \sum_{j=0}^{i-1}h_j \mathbf x_0 $ for $i=1,2,\dots,n$ and $\mathbf x_n = \mathbf x_0 - \frac 1w( \mathbf 1^Th) \mathbf x_0 =\frac{1}{w}\mathbf x_0$. Hence $\Vert\mathbf g_n \Vert= \frac Lw$ and
    \[f_0-f_\star = \frac Lw - \frac{L}{2w^2} -0= \frac{2w-1}{2w^2}L=\left(2(\mathbf 1^Th)+1\right)\cdot \frac{1}{2L}\Vert\mathbf g_n\Vert^2.
    \]
\end{proof}

\subsection{Reverse  Concatenation}

\cref{ineq-h-duality} inspires us to focus on \dual SSs that inherently possess tighter bounds on $\Vert\mathbf g_n\Vert^2$.
The following theorem implies that such SSs can be established recursively through a concatenation process similar to \texttt{ConPD}.

\begin{theorem}[Concatenation of \dual and  primitive SSs (\texttt{ConGP})]\label{them-CDP}
Suppose $h_d\in\mathbb R_+^n$ is a \dual SS and $h_b\in\mathbb R_+^{m-n-1}$ is a
primitive SS, where $m\geqslant n+1$.
Then the new SS $h_e =[h_d^T,\beta,h_b^T]^T\in\mathbb R_+^m$ is also \dual, where $\beta=\psi(\mathbf 1^Th_b,\mathbf 1^Th_d)$ and $\psi(\cdot,\cdot)$ is defined by~\eqref{eq:def-psi}.
\end{theorem}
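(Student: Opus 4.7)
The plan is to mirror the structure used in the proof of \cref{them-CPD}, but now deriving a bound on $f_0 - f_m$ in terms of $\|\mathbf g_m\|^2$ rather than a squared-distance expression. The two ingredients should be the g-bounded property of the prefix schedule $h_d$ (governing iterations $0,\dots,n$) and \cref{lemma:sd} (sufficient decrease for multiple steps) applied to the suffix, treating $h_d$ as an arbitrary prefix to the primitive tail $h_b$ with long-step scale $\alpha=\beta$.

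First I would write $f_0-f_m=(f_0-f_n)+(f_n-f_m)$. For the first piece, \cref{def-dual} applied to $h_d$ gives
\begin{equation*}
f_0-f_n \;\geq\; (\mathbf 1^Th_d)\cdot\frac{1}{L}\Vert \mathbf g_n\Vert^2.
\end{equation*}
For the second piece, I need to invoke \cref{lemma:sd} with the primitive suffix $h_b$ and prefix $h_d$, which requires $\beta\in[1,\mathbf 1^Th_b+2)$. Using $\beta=\psi(\mathbf 1^Th_b,\mathbf 1^Th_d)$, the bound $\beta\ge1$ reduces after squaring to $(2y+1)(8x+8)\ge0$, and $\beta<x+2$ reduces to $(4x+5+2y)^2-(2y+1)(2y+8x+9)=16(x+1)^2>0$, where I use the shorthand $x=\mathbf 1^Th_b$, $y=\mathbf 1^Th_d$. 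With these in hand, \cref{lemma:sd} yields
\begin{equation*}
f_n-f_m \;\geq\; \frac{x+3\beta-2\beta^2}{x+2-\beta}\cdot\frac{\Vert\mathbf g_n\Vert^2}{2L}+\frac{2x^2+3x+\beta}{x+2-\beta}\cdot\frac{\Vert\mathbf g_m\Vert^2}{2L}.
\end{equation*}

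Summing the two inequalities gives a bound on $f_0-f_m$ of the form $A\cdot\tfrac{1}{2L}\Vert \mathbf g_n\Vert^2 + B\cdot\tfrac{1}{2L}\Vert\mathbf g_m\Vert^2$, and the heart of the argument is the algebraic collapse. Squaring the defining equation for $\beta=\psi(x,y)$, namely $4\beta+2y-3=\sqrt{(2y+1)(2y+8x+9)}$, and simplifying gives the key identity
\begin{equation*}
2\beta^2+2y\beta-3\beta \;=\; 2xy+4y+x.
\end{equation*}
Substituting this identity shows that the coefficient of $\Vert\mathbf g_n\Vert^2$ vanishes exactly: $A=\tfrac{-(2\beta^2+2y\beta-3\beta)+2xy+4y+x}{x+2-\beta}=0$. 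The same identity, rearranged as $2x^2+3x+\beta=2(x+y+\beta)(x+2-\beta)$, shows $B=2(x+y+\beta)$. Hence $f_0-f_m\ge (x+y+\beta)\cdot\tfrac{1}{L}\Vert \mathbf g_m\Vert^2=(\mathbf 1^Th_e)\cdot\tfrac{1}{L}\Vert\mathbf g_m\Vert^2$, establishing \cref{def-dual} for $h_e$.

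The main conceptual obstacle is the precise choice of $\beta$: one has to recognize that $\psi$ is tailored so that the cross-term $\Vert\mathbf g_n\Vert^2$ drops out completely, leaving a clean bound purely on $\Vert\mathbf g_m\Vert^2$. This is the H-duality counterpart of the analogous cancellation in \cref{them-CPD}, where the choice of $\beta$ there (via $\lambda_1,\lambda_2,\gamma$) makes a quadratic form collapse into a perfect square. The rest of the work---checking the domain of applicability of \cref{lemma:sd} and verifying the two algebraic consequences of $\psi$---is routine once the identity above is isolated.
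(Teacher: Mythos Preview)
Your proposal is correct and follows essentially the same approach as the paper: both combine the g-bounded inequality for the prefix $h_d$ with \cref{lemma:sd} applied to the primitive suffix $h_b$, and then use the defining quadratic $2\beta^2-(3-2y)\beta-(2xy+x+4y)=0$ to show that the $\Vert\mathbf g_n\Vert^2$ coefficient vanishes while the $\Vert\mathbf g_m\Vert^2$ coefficient becomes $2(x+y+\beta)=2(\mathbf 1^Th_e)$. The only cosmetic difference is that the paper packages the range check $1<\beta<x+2$ into \cref{lemma:CDP}, whereas you verify it inline.
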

We denote by $h_e=\Call{ConGP}{h_d,h_b}$ a procedure of \texttt{ConGP} with $h_d$ and $h_b$ as input and $h_e$ as output.

\begin{proof}[Proof of \CDP]
Let $x = \mathbf 1^Th_b$ and $y = \mathbf 1^Th_d$. This gives $\mathbf 1^Th_e = x+\beta+y$.
As $1<\beta<\mathbf 1^Th_b+2$ by~\cref{lemma:CDP} and $h_b$ is primitive,~\cref{lemma:sd} yields
\begin{equation}\label{eq:CDP1}
    f_n - f_m\geqslant \frac{x+3\beta-2\beta^2}{x+2-\beta}\cdot \frac{1}{2L}\Vert\mathbf g_n\Vert^2+\frac{2x^2+3x+\beta}{x+2-\beta}\cdot \frac{1}{2L}\Vert\mathbf g_m\Vert^2.
\end{equation}
Since $\beta = \frac{3-2y+\sqrt{(2y+1)(2y+8x+9)}}{4}$ is  the root of the equation $2\beta^2 -(3-2y)\beta-(2xy+x+4y)=0$, we have
\begin{equation*}
    \frac{x+3\beta - 2\beta^2}{2(x+2-\beta)}+y = \frac{-2\beta^2+(3-2y)\beta +(2xy+x+4y)}{2(x+2-\beta)}=0,~\text{and}
\end{equation*}
\begin{equation*}
    \frac{2x^2+3x+\beta}{2(x+2-\beta)}-(x+\beta+y) = \frac{2\beta^2-(3-2y)\beta -(2xy+x+4y)}{2(x+2-\beta)}=0.
\end{equation*}
This converts~\eqref{eq:CDP1}  to  $f_n -f_m\geqslant -y\cdot \frac 1L \Vert\mathbf g_n\Vert^2+(x+\beta+y) \cdot \frac 1L \Vert\mathbf g_m\Vert^2$, i.e.,  
\begin{equation}\label{eq:CDP0}
f_n -f_m\geqslant -(\mathbf 1^Th_d)\cdot \frac 1L \Vert\mathbf g_n\Vert^2+(\mathbf 1^Th_e)\cdot \frac 1L \Vert\mathbf g_m\Vert^2.
\end{equation}
Since $h_d$ is a  \dual SS, from~\cref{def-dual} we have $f_0-f_n\ge (\mathbf 1^Th_d)\cdot\frac{1}{L}\|\g_n\|^2$.
This adding with~\eqref{eq:CDP0} gives $f_0 -f_m\geqslant (\mathbf 1^Th_e)\cdot \frac 1L \Vert\mathbf g_m\Vert^2$.
So $h_e$ is \dual by~\cref{def-dual}.
\end{proof}

It is noteworthy that the $\beta$ in  \texttt{ConGP} has a symmetric structure to that of $\beta$ in \texttt{ConPD} in~\cref{them-CPD}. Both \texttt{ConPD} and \texttt{ConGP} involve concatenating a primitive SS; however, they differ in the direction of concatenation---the former appends the primitive stepsize at the beginning, while the latter attaches it to the end. Consequently, one might expect a dual  application of  \texttt{ConGP} when utilizing \texttt{ConPD}. This dual nature is further discussed in the next subsection, where we obtain \dual SSs by reversing the dominant SSs.

\subsection{SSs for Small Gradient Norms}

We next define a new family of SSs by simply reversing the entries of $h_\algB^{(n)}$.
{Given an SS $h=[h_0,\dotsc,h_{n-1}]^T$, let $\widetilde h$ denote the reverse of $h$, i.e., $\widetilde h = [h_{n-1},\dotsc,h_0]^T$.}
\begin{definition}\label{def-alg3}
Define a family of SSs $H_\algC=\{h_\algC^{(i)}:\ i\in\N_0\}$ as follows,
\begin{equation*}
h_\algC^{(n)}={\tilde h_\algB^{(n)}}\in\mathbb R^n,
\end{equation*}
where $h_\algB^{(n)}$ is the SS defined in Definition \ref{def-alg2}.
\end{definition}

The next proposition reveals that the SS defined in~\cref{def-alg3} is \dual.
\begin{proposition}\label{prop:def-alg3-recur} Any SS $h_\algC^{(n)}$ defined in Definition \ref{def-alg3} is \dual. Also, for $n\geqslant 1$,  $h_\algC^{(n)}$ satisfies the recursion:
\begin{equation}\label{eq:def-alg3}
\begin{aligned}
\mathcal H_\algC^{(n)}&=\left\{ \Call{ConGP}{h_\algC^{(k)}, {\tilde h_\algA^{(n-k-1)}}}: \ k=0,1,\dotsc,n-1 \right\}\subset \mathbb R^n,
   \\ h_\algC^{(n)}&\in\argmax_{h}\left\{\mathbf 1^Th:\ h\in\mathcal H_\algC^{(n)}\right\}.
\end{aligned}
\end{equation}
where $h_\algA^{(n)}$ is the primitive SS defined in Definition \ref{def-alg1}.

\end{proposition}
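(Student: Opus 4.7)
The plan is to proceed by strong induction on $n$, after first establishing the auxiliary fact that $\tilde h_\algA^{(k)}$ is primitive for every $k\in\mathbb N_0$. This lemma is the key enabler, because the recursion \eqref{eq:def-alg3} invokes \texttt{ConGP} with second argument $\tilde h_\algA^{(n-k-1)}$, and \CDP\ requires that argument to be primitive in order to certify the output as \dual.

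The auxiliary claim itself goes by a short induction on $k$. The base case $\tilde h_\algA^{(0)} = [~]^T$ is primitive by \cref{example-zero}. For the inductive step, write $h_\algA^{(k)} = \Call{ConPP}{h_\algA^{(k_1)}, h_\algA^{(k_2)}}$ with $k_1+k_2+1 = k$; reversal yields $\tilde h_\algA^{(k)} = [\tilde h_\algA^{(k_2)T}, \alpha, \tilde h_\algA^{(k_1)T}]^T$ with unchanged intermediate
\[\alpha = \varphi(\mathbf 1^Th_\algA^{(k_1)}, \mathbf 1^Th_\algA^{(k_2)}) = \varphi(\mathbf 1^T\tilde h_\algA^{(k_2)}, \mathbf 1^T\tilde h_\algA^{(k_1)}),\]
using the symmetry of $\varphi$ and the invariance of sums under reversal. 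By the inductive hypothesis $\tilde h_\algA^{(k_1)}$ and $\tilde h_\algA^{(k_2)}$ are primitive, so \CTP\ certifies $\tilde h_\algA^{(k)}$ is primitive.

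For the main induction on $n$, the base case $n=0$ is immediate: $h_\algC^{(0)}=[~]^T$ reduces \eqref{eq:gbd} to $0\geq 0$. For the inductive step, \cref{def-alg2} provides an optimizing index $k^*$ with $h_\algB^{(n)} = [h_\algA^{(k^*)T}, \beta^*, h_\algB^{(n-k^*-1)T}]^T$, where $\beta^* = \psi(\mathbf 1^Th_\algA^{(k^*)}, \mathbf 1^Th_\algB^{(n-k^*-1)})$. Reversing and using $h_\algC^{(j)} = \tilde h_\algB^{(j)}$ yields
\[h_\algC^{(n)} = [h_\algC^{(n-k^*-1)T}, \beta^*, \tilde h_\algA^{(k^*)T}]^T.\]
Because $\psi$ depends only on sums and sums are preserved under reversal, $\beta^* = \psi(\mathbf 1^T\tilde h_\algA^{(k^*)}, \mathbf 1^Th_\algC^{(n-k^*-1)})$, which is precisely the intermediate prescribed by \texttt{ConGP} applied to the first argument $h_\algC^{(n-k^*-1)}$ (g-bounded by the inductive hypothesis) and second argument $\tilde h_\algA^{(k^*)}$ (primitive by the auxiliary claim). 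Hence $h_\algC^{(n)} = \Call{ConGP}{h_\algC^{(n-k^*-1)}, \tilde h_\algA^{(k^*)}} \in \mathcal H_\algC^{(n)}$, and \CDP\ certifies it is \dual.

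For the argmax assertion, observe that for every $k \in \{0,\dots,n-1\}$ the element $\Call{ConGP}{h_\algC^{(k)}, \tilde h_\algA^{(n-k-1)}}$ of $\mathcal H_\algC^{(n)}$ has sum $\mathbf 1^Th_\algB^{(k)} + \psi(\mathbf 1^Th_\algA^{(n-k-1)}, \mathbf 1^Th_\algB^{(k)}) + \mathbf 1^Th_\algA^{(n-k-1)}$, which matches the sum of $\Call{ConPD}{h_\algA^{(j)}, h_\algB^{(n-j-1)}} \in \mathcal H_\algB^{(n)}$ under the bijection $j \leftrightarrow n-k-1$. Therefore $\max\{\mathbf 1^Th : h \in \mathcal H_\algC^{(n)}\} = \max\{\mathbf 1^Th : h \in \mathcal H_\algB^{(n)}\} = \mathbf 1^Th_\algB^{(n)} = \mathbf 1^Th_\algC^{(n)}$, which witnesses $h_\algC^{(n)}$ as the argmax. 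The main obstacle in the plan is the primitive-under-reversal lemma; once that is in hand, the rest follows from the symmetry of $\varphi$ and the fact that $\psi$ depends only on the sums of its arguments.
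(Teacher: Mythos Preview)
Your proposal is correct and follows essentially the same approach as the paper: both first establish by induction (using the symmetry of $\varphi$) that $\tilde h_\algA^{(k)}$ is primitive, then observe that reversing $\Call{ConPD}{h_\algA^{(k)}, h_\algB^{(n-k-1)}}$ yields exactly $\Call{ConGP}{h_\algC^{(n-k-1)}, \tilde h_\algA^{(k)}}$, so that $\mathcal H_\algC^{(n)}$ is the set of reversals of $\mathcal H_\algB^{(n)}$ and the argmax transfers. Your explicit bijection $j\leftrightarrow n-k-1$ on sums is just a restatement of the paper's observation that reversal preserves $\mathbf 1^Th$.
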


\begin{proof} To ensure that~\eqref{eq:def-alg3} is well-defined, we  first show that ${\tilde h_\algA^{(n)}}$, $n\in\N_0$, are also primitive. For $n=0$,  $[~]^T\in\mathbb R^0$ is the reverse of itself, which is primitive. We then proceed by induction. Assume ${\tilde h_\algA^{(k)}}$ is primitive when $k<n$.
Recall that  there exists $k<n$ such that $h_\algA^{(n)}= \Call{ConPP}{h_\algA^{(k)},h_\algA^{(n-k-1)}}$. By induction $\Call{ConPP}{{\tilde h_\algA^{(n-k-1)}},{\tilde h_\algA^{(k)}}}$ should also be  primitive as an output of \texttt{ConPP}. But it is exactly ${\tilde h_\algA^{(n)}}$ by checking the definition of \texttt{ConPP}, which completes the induction.

Given that every  ${\tilde h_\algA^{(n)}}$ is primitive, we now prove our proposition. When $n=0$, $h_\algC^{(0)} = [~]^T\in\mathbb R^0$ is trivially \dual because $f_0 - f_0  \geqslant 0\cdot \Vert\mathbf g_0\Vert^2$. Fix $n\geqslant 1$ and we assume ${h_\algC^{(k)}}$ is \dual for $k<n$. By the definitions of \texttt{ConPD} and \texttt{ConGP}, the reversed vector of $\Call{ConPD}{h_\algA^{(k)}, h_\algB^{(n-k-1)}}$ coincides with $\Call{ConGP}{{\tilde h_\algB^{(n-k-1)}}, {\tilde h_\algA^{(k)}}}$, which is just $\Call{ConGP}{{h_\algC^{(n-k-1)}}, {\tilde h_\algA^{(k)}}}$. Therefore, the collection $\mathcal H_\algC^{(n)}$ contains all reversed vectors of $\mathcal H_\algB^{(n)}$. As $h_\algB^{(n)}$ is the SS with maximum $\mathbf 1^Th$ in $\mathcal H_\algB^{(n)}$, its reverse, $h_\algC^{(n)}$, has the maximum value $\mathbf 1^Th$ among $\mathcal H_\algC^{(n)}$. So~\eqref{eq:def-alg3} holds by induction.
\end{proof}

As~\cref{prop:def-alg3-recur} asserts that every $h_\algC^{(n)}$ is \dual, the following worst-case bound on  gradient norm directly  follows from~\cref{ineq-h-duality}.

\begin{proposition}\label{them-alg3}Gradient descent~\eqref{eq:gd}
 with SS $h_\algC^{(n)}$ has
\[\frac{1}{2L}  \Vert\mathbf g_n\Vert^2\leq \frac{1}{2(\mathbf 1^Th_\algC^{(n)})+1}\cdot (f_0-f_\star),\]
where equality is attained by the Huber function~\eqref{eq:huber-grad}.
\end{proposition}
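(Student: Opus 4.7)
The plan is straightforward: the proposition follows as an immediate corollary of two results already established in the excerpt, so the ``proof'' amounts to stitching them together and separately verifying the tightness claim by invoking the Huber example from \cref{ineq-h-duality}.

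First, I would appeal to \cref{prop:def-alg3-recur}, which asserts that every stepsize schedule $h_\algC^{(n)}$ from \cref{def-alg3} is \dual. With that fact in hand, \cref{ineq-h-duality} applies directly with $h=h_\algC^{(n)}$, yielding the bound
\begin{equation*}
\frac{1}{2L}\Vert\mathbf g_n\Vert^2\leq \frac{1}{2(\mathbf 1^Th_\algC^{(n)})+1}\cdot (f_0-f_*),
\end{equation*}
as required. So the nontrivial content has already been done upstream; the present proposition is essentially a specialization.

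For the equality claim, I would again defer to \cref{ineq-h-duality}: that theorem explicitly exhibits the Huber function \eqref{eq:huber-grad} with $w=(\mathbf 1^Th)+1$ as a tight instance for every \dual schedule, provided $\Vert\mathbf x_0\Vert=1$. Since $h_\algC^{(n)}$ is \dual, applying this extremal construction with $w=(\mathbf 1^Th_\algC^{(n)})+1$ shows that the bound is attained. There is no real obstacle here: the only thing worth double-checking is that the reversal in \cref{def-alg3} does not affect the Huber-tightness argument, which it does not, because the iterates $\mathbf x_i$ produced by the Huber function lie along the ray $\mathbf x_0/\mathbf 0$ regardless of the order of the steps, so the computation $\Vert\mathbf x_n\Vert=1/w$ carries over verbatim. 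Accordingly, the full proof is one line referencing \cref{prop:def-alg3-recur} and \cref{ineq-h-duality}.
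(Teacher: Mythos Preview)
Your proposal is correct and matches the paper's own reasoning: the paper likewise notes that \cref{prop:def-alg3-recur} guarantees $h_\algC^{(n)}$ is \dual, so the bound (including tightness via the Huber example) follows immediately from \cref{ineq-h-duality}.
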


Comparing Propositions \ref{them-alg2} and \ref{them-alg3}, there is a symmetry between the families of SSs  $H_\algB$ and $H_\algC$, which are effective in reducing the objective gap and the gradient norm, respectively.
For example, the schedule $h_\algB^{(5)}=[\sqrt 2,2,\sqrt 2,\sqrt{7+4\sqrt 2},3/2]^T$ corresponds to the objective bound $f_5-f_\star\leqslant 0.04814\cdot \frac L2\Vert\mathbf x_0-\mathbf x_\star\Vert^2$, while $h_\algC^{(5)}=[3/2,\sqrt{7+4\sqrt 2},\sqrt 2,2,\sqrt 2]^T$ corresponds to the gradient bound  $  \frac{1}{2L}\Vert \mathbf g_5\Vert^2\leqslant 0.04814 (f_0-f_\star)$.
As a consequence, the asymptotic behavior of $\mathbf 1^Th_\algB^{(n)}$ studied in~\cref{sect:asymptotic} also applies to $\mathbf 1^Th_\algC^{(n)}$,
as presented in the next theorem.

\begin{theorem} \label{them-convergence3.29}
Let $\bar\nu_\algC:=\liminf_{n\to\infty}\mathbf 1^Th_\algC^{(n)}/n^\varrho$ and $\bar\nu_\algB:=\liminf_{n\to\infty}\mathbf 1^Th_\algB^{(n)}/n^\varrho $.  Then $\bar\nu_\algC=\bar\nu_\algB$. 
Gradient descent~\eqref{eq:gd} with $h_\algC^{(n)}$ enjoys an asymptotic bound of
    \[
    \frac{1}{2L}\Vert\mathbf g_n\Vert^2\leqslant ((2\bar\nu_\algC)^{-1}+o(1))\cdot n^{-\varrho}\cdot  (f_0-f_\star),
    \]
    where $\varrho =\log_2(\sqrt 2+1)$.
\end{theorem}

\begin{proof} 
Since $\mathbf 1^Th_\algC^{(n)}=\mathbf 1^Th_\algB^{(n)}$ by the definition of $h_\algC^{(n)}$, it follows that $\bar\nu_\algB=\bar\nu_\algC$. The asymptotic bound is a direct corollary of~\cref{them-alg3}.
\end{proof} 

\begin{remark}\label{them-convergence3.3}
{Combining~\cref{them-convergence3.29} with the numerical evidence from~\cref{them-convergence2.3} suggests that $(2\bar\nu_\algC)^{-1}<0.4239$.} 
This improves~\cite{grimmer2024acceleratedobjective} asymptotically by a constant factor $\frac{0.4302}{0.4239}\approx 1.015$. Moreover, our SS is available for any iteration number $n$, while the one in~\cite{grimmer2024acceleratedobjective} is only available for $n=2^l-1,~l\in\N$.
\end{remark}


\begin{remark}
We note that a concurrent work by Grimmer, Shu, and Wang~\cite{grimmercomposing,grimmer2025composing-MOR}---first released as an \textit{arXiv} preprint on October~21,~2024, and later published in \textit{Mathematics of Operations Research}---shares conceptual similarities with our approach, which appeared on \textit{arXiv} five days earlier, on October~16,~2024. 
In their work, Grimmer et al.~\cite{grimmercomposing,grimmer2025composing-MOR} proposed a composition framework that is analogous to our method. Specifically, their notions of s-composable, f-composable, and g-composable SSs closely parallel our definitions of primitive, dominant, and g-bounded  SSs. 
{
A notable methodological difference is that~\cite{grimmercomposing,grimmer2025composing-MOR} requires tightness on both quadratic and Huber functions as a design constraint, whereas our analysis focuses solely on tightness with respect to the Huber function. Despite this difference in formulation,  our constructed $h_\algA$, $h_\algB$ and $h_\algC$ are identical to their optimized basic stepsize schedules $h^{\texttt{OBS-S}}$, $h^{\texttt{OBS-F}}$ and $h^{\texttt{OBS-G}}$, respectively.}
\end{remark}

\subsection{Dynamic SS for Reducing Gradient Norm}

Similar to~\cref{them-dynamic} in~\cref{sect:dynamic}, a dynamic stepsize sequence
 reducing the gradient norm is available by applying \texttt{ConGP} repetitively, as illustrated below.

\begin{proposition}[Dynamic Stepsize 2]\label{them-dynamic2} Suppose $h^{(0)}\in\mathbb R_+^{n_0}$ is a \dual SS  and $h_b\in\mathbb R_+^{m}$ is a primitive SS. Define the recursion $h^{(k)} = \Call{ConGP}{h^{(k-1)}, h_b}\in \mathbb R_+^{n_k}$ for $k\in \N$. Then $n_k= n_0+k(m+1)$ and $\lim_{k\to\infty}\mathbf 1^Th^{(k)} /n_k =2(\mathbf 1^Th_b+1)/(m+1)$. Moreover, gradient descent~\eqref{eq:gd} with stepsize sequence $h^{(k)}$ enjoys the bound
\begin{equation}\label{eq:ds2}
\frac{1}{2L}\Vert \mathbf g_{n_k}\Vert^2\leqslant \frac{1}{2(\mathbf 1^Th^{(k)})+1}\cdot (f_0-f_\star)\simeq \frac{m+1}{4n_k(\mathbf 1^Th_b+1)}\cdot (f_0-f_\star).
\end{equation}
In particular, if $h_b=h_\algA^{(m)}$, we have
\begin{equation}\label{eq:ds21}
\frac{1}{2L}\Vert \mathbf g_{n_k}\Vert^2\leqslant
\left(\frac{m+1}{4(\sqrt 2-1)(m+2)^{\varrho} }+o(1)\right)\cdot \frac{1}{n_k}\cdot (f_0-f_\star).
\end{equation}
\end{proposition}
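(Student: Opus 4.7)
The plan is to mimic the proof of \cref{them-dynamic} with \texttt{ConGP} replacing \texttt{ConPP} and \cref{ineq-h-duality} replacing \cref{coro:bound-primitive}. The fact that each $h^{(k)}$ remains g-bounded follows by a simple induction on $k$: $h^{(0)}$ is g-bounded by assumption, and if $h^{(k-1)}$ is g-bounded then \cref{them-CDP} (applied with $h_d = h^{(k-1)}$ and the primitive $h_b$) guarantees that $h^{(k)} = \Call{ConGP}{h^{(k-1)}, h_b}$ is g-bounded as well. By the definition of \texttt{ConGP}, the concatenation $h^{(k)} = [(h^{(k-1)})^T, \beta^{(k-1)}, h_b^T]^T$ appends $m+1$ entries, so $n_k = n_0 + k(m+1)$.

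Next I would derive the recursion
\begin{equation*}
\mathbf 1^T h^{(k)} = \mathbf 1^T h^{(k-1)} + \beta^{(k-1)} + \mathbf 1^T h_b,\quad \beta^{(k-1)} = \psi(\mathbf 1^T h_b,\,\mathbf 1^T h^{(k-1)}),
\end{equation*}
with $\psi$ from \eqref{eq:def-psi}. Writing $x = \mathbf 1^T h_b$ and $y_k = \mathbf 1^T h^{(k)}$, the recursion reads
\begin{equation*}
y_k - y_{k-1} = \frac{3 - 2y_{k-1} + \sqrt{(2y_{k-1}+1)(2y_{k-1}+8x+9)}}{4} + x.
\end{equation*}
Since $\psi(x,y) > 1$ for $x, y \ge 0$ (as used implicitly throughout \cref{sect:concat}), $\{y_k\}$ is strictly increasing, and a direct check shows $\{y_k\}$ is unbounded.

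The core analytic step is the Stolz–Ces\`aro computation of $\lim_{k\to\infty} y_k / k$. Expanding the square root for large $y$ gives $\sqrt{(2y+1)(2y+8x+9)} = 2y + (4x+5) + O(1/y)$, so
\begin{equation*}
\lim_{y\to\infty}\psi(x,y) = \lim_{y\to\infty}\frac{3-2y+2y+(4x+5)+O(1/y)}{4} = x+2.
\end{equation*}
Therefore $\lim_{k\to\infty}(y_k-y_{k-1}) = (x+2)+x = 2(\mathbf 1^T h_b+1)$, and Stolz–Ces\`aro yields $\lim_k y_k/k = 2(\mathbf 1^T h_b+1)$. Dividing by $n_k/k \to m+1$ produces the stated limit $\lim_k \mathbf 1^T h^{(k)}/n_k = 2(\mathbf 1^T h_b+1)/(m+1)$.

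Finally, since $h^{(k)}$ is g-bounded, \cref{ineq-h-duality} gives the first inequality in \eqref{eq:ds2}, and the asymptotic equivalence $\simeq$ follows from the limit just computed. For \eqref{eq:ds21}, substituting $h_b = h_\algA^{(m)}$ and applying the lower bound $\mathbf 1^T h_\algA^{(m)} + 1 \geq (\sqrt 2-1)(m+2)^\varrho$ from \cref{them-convergence1.2} into \eqref{eq:ds2} is immediate. I expect the only nontrivial checkpoint to be the asymptotic expansion inside the Stolz–Ces\`aro step, but this is the same mechanism as in the proof of \cref{them-dynamic}; all other steps are bookkeeping and direct invocations of the cited theorems.
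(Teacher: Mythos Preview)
Your proposal is correct and follows essentially the same approach as the paper's proof: both use the recursion from \cref{them-CDP} to get $n_k=n_0+k(m+1)$ and the formula for $\mathbf 1^Th^{(k)}$, apply Stolz--Ces\`aro with the same asymptotic expansion of $\psi(x,y)$ as $y\to\infty$, and finish via \cref{ineq-h-duality} and \cref{them-convergence1.2}. Your explicit induction that each $h^{(k)}$ is g-bounded and your citation of $\psi(x,y)>1$ (which is \cref{lemma:CDP}) are stated slightly more carefully than in the paper, but the argument is otherwise identical.
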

\begin{proof} The proof mimics that of~\cref{them-dynamic}. For the sake of completeness, we present it here. Recall the recursion  $h^{(k+1)}=[(h^{(k)})^T,\beta^{(k)},h_b^T]^T$ by \CDP, where $\beta^{(k)}=\psi(\mathbf 1^Th_b,\mathbf 1^Th^{(k)})$ and~\eqref{eq:def-psi}. This gives $h^{(k)}\in\mathbb R^{n_0+k(m+1)}$ and the recursion 
$ \mathbf 1^Th^{(k+1)} 
  = \mathbf 1^Th^{(k)}+\frac{3-2(\mathbf 1^Th^{(k)})+\sqrt{(2(\mathbf 1^Th^{(k)})+1)(2(\mathbf 1^Th^{(k)})+8(\mathbf 1^Th_b)+9)}}{4}+\mathbf 1^Th_b$, implying 
 $\mathbf 1^Th^{(k+1)}=\mathbf 1^Th^{(k)}+2(\mathbf 1^Th_b+1)+o(1)$ when $k\to\infty$. By the Stolz-Ces\`{a}ro theorem,  $\lim_{k\to\infty}\mathbf 1^Th^{(k)}/k = 2(\mathbf 1^Th_b+1)$. 
Combining it with $n_k = n_0+k(m+1)$ yields $\lim_{k\to\infty} \mathbf 1^Th^{(k)}/n_k = 2(\mathbf 1^Th_b+1)/(m+1)$ as desired. Then~\eqref{eq:ds2} follows directly from~\cref{ineq-h-duality}.
Equation~\eqref{eq:ds21} follows from~\eqref{eq:ds2} and~\cref{them-convergence1.2}.
\end{proof}

\begin{remark}
    When taking $h^{(0)}= h_b = [~]^T\in\mathbb R^0$ to be the zero-iteration SS, the dynamic stepsize sequence in~\cref{them-dynamic2} extends recursively as follows:
    \begin{equation}\label{eq:remark-rotaru1}
    h_n = \frac{3-2(\mathbf 1^Th^{(n)}) +\sqrt{(2(\mathbf 1^Th^{(n)})+1)(2(\mathbf 1^Th^{(n)})+9)}}{4},~n\in\N_0,
    \end{equation}
    where $\mathbf 1^Th^{(0)}=0$ and $\mathbf 1^Th^{(n)} = \sum_{j=0}^{n-1}h_j$ for $n\geqslant 1$.  We can rewrite~\eqref{eq:remark-rotaru1} to express $\mathbf 1^Th^{(n)}$ in terms of $h_n$, yielding the identity $ \mathbf 1^Th^{(n)}=\frac{2h_n^2-3h_n}{4-2h_n}$, $n\in\N_0$. It then follows that 
    $   \mathbf 1^Th^{(n)}=\mathbf 1^Th^{(n-1)}+h_{n-1}
       =\frac{2h_{n-1}^2-3h_{n-1}}{4-2h_{n-1}}+h_{n-1}
       =\frac{h_{n-1}}{4-2h_{n-1}}$ for $n\in\N$.
    Substituting it into~\eqref{eq:remark-rotaru1} yields
$
        h_n = \frac{3-2h_{n-1}+\sqrt{9-4h_{n-1}}}{2(2-h_{n-1})},~n\in\N.
$
    The resulting recursion is identical to the sequence in~\cite[Corollary 2.19]{rotaru2024exact}. Applying~\cref{them-dynamic2}, this stepsize sequence has bound $\frac{1}{2L}\Vert\mathbf g_{n_k}\Vert^2\leqslant \left(\frac14+o(1)\right)\cdot {n_k}^{-1}\cdot (f_0-f_\star)$.  Similar to~\cref{remark:teboulle}, the constant $1/4$ can be improved by an arbitrary constant factor with proper $h_b$, and  the sequence exhibits a nearly periodic pattern.
\end{remark}

\section{Comparisons of Newly Computed Guarantees with the Literature}\label{sec:experiment}

{In this section, we present comparisons of the worst-case bounds achieved by our proposed SSs $h_\algB^{(n)}$ and $h_\algC^{(n)}$ against other memoryless fixed-step SSs reported in the literature.}

\subsection{Comparison for Objective Values}

In this subsection, we first present the approximate numerical values of $h_\algB^{(n)}$ for $1\leqslant n\leqslant 7$ in~\cref{tab:Hb-numer} and supplement the exact values {\cbll in~\cref{tab:Hb-exact}}.
We remark that our SSs are not unique as discussed in~\cref{sect:nonunique}.

\begin{table}[h]\small
    \caption{Numerical SSs of Definition~\ref{def-alg2} for $1\leqslant n\leqslant 7$ with the value $\beta$ of \CPD\  highlighted.}
    \label{tab:Hb-numer}
     \centering
    \begin{tabular}{c|l}
    \toprule
        $n$ & $(h_\algB^{(n)})^T$ (Numerical) \\ \midrule
        1 &  [\textbf{1.500000}]\\
        2 &  [1.414214, \textbf{1.876768}]\\
        3 &   [1.414214, \textbf{2.414214}, 1.500000]\\
        4 &   [1.414214, 1.601232, \textbf{3.005144}, 1.500000]\\
        5 &  [1.414214, 2.000000, 1.414214, \textbf{3.557647}, 1.500000]\\
        6 &  [1.414214, 2.000000, 1.414214, \textbf{4.172876}, 1.414214, 1.876768]\\
        7 &   [1.414214, 1.601232, 2.260578, 1.414214, \textbf{4.826959}, 1.414214, 1.876768]\\
    \bottomrule
    \end{tabular}
\end{table}

{Next we compare the objective value bounds of our proposed SSs family $H_\algB$ from~\cref{def-alg2} with existing {memoryless fixed-step SSs} in~\cref{tab:compare-combined} (left five columns) of Teboulle and Vaisbourd~\cite[Theorem 4]{teboulle2023elementary}, Das Gupta et al.~\cite{gupta2024branch}, Kamri et al.~\cite{KHG25} and Grimmer et al.~\cite{grimmer2024acceleratedobjective}. Specifically, we analyze the constant $C$ in the convergence bound $f_n-f_\star\leqslant C\cdot  \frac L2\Vert\mathbf x_0-\mathbf x_\star\Vert^2$, where~\cite{teboulle2023elementary} and~\cite{grimmer2024acceleratedobjective} provide analytic solutions, while the bounds of~\cite{gupta2024branch} and~\cite{KHG25} are numerical. Particularly, we numerically computed the bounds of~\cite{gupta2024branch} by PEPit~\cite{pepit2022} with SSs provided in the supplementary material of~\cite{gupta2024branch}. For~\cite{KHG25}, we obtain the bounds from Table~1 in~\cite{KHG25}. Since~\cite{KHG25} only reported on iterations where their method outperforms~\cite{gupta2024branch}, the unmentioned iterations are omitted. We also note that the metric in~\cite{KHG25} should be scaled by a factor of $2$ to align with our metric.}


\begin{table}[t]
    \centering\small
\caption{Convergence bounds comparison for SSs, evaluating both objective value  and gradient norm. 
Shared columns (center) present identical bounds for both metrics but with different SSs. Best results per metric are bolded.}\label{tab:compare-combined}
\begin{tabular*}{\textwidth}{@{\extracolsep\fill}ccccccc}\toprule
\multirow{2}{*}{Iterations} & \multicolumn{3}{c}{Objective Value} & \multicolumn{2}{c}{Shared ($f$/$g$)} & Gradient Norm    \\ \cmidrule(lr){2-4} \cmidrule(lr){5-6} \cmidrule(lr){7-7}
& TV~\cite{teboulle2023elementary}    & DVR~\cite{gupta2024branch}  & KHG~\cite{KHG25} & GSW~\cite{grimmer2024acceleratedobjective}      & Ours      & RGP~\cite{rotaru2024exact}  \\ \midrule
1 & 0.261204 & \textbf{0.250000} & \slash & \textbf{0.250000} & \textbf{0.250000} & \textbf{0.250000} \\
2 & 0.142229 & \textbf{0.131892} & \slash & \slash & \textbf{0.131892} & 0.133975 \\
3 & 0.095827 & \textbf{0.085786} & \slash & \textbf{0.085786} & \textbf{0.085786} & 0.090059 \\
4 & 0.071613 & \textbf{0.062340} & \slash & \slash & \textbf{0.062340} & 0.067412 \\
5 & 0.056899 & \textbf{0.048141} & \slash & \slash & \textbf{0.048141} & 0.053707 \\
6 & 0.047070 & 0.040197 & 0.039790 & \slash & \textbf{0.039086} & 0.044561 \\
7 & 0.040066 & \textbf{0.032662} & \slash & 0.032768 & \textbf{0.032662} & 0.038039 \\
8 & 0.034835 & 0.028109 & 0.027924 & \slash & \textbf{0.027869} & 0.033161 \\
9 & 0.030787 & 0.024565 & 0.024368 & \slash & \textbf{0.024182} & 0.029378 \\
10 & 0.027565 & \textbf{0.021245} & \slash & \slash & \textbf{0.021245} & 0.026362 \\
11 & 0.024943 & 0.019184 & \slash & \slash & \textbf{0.018869} & 0.023902 \\
12 & 0.022768 & 0.017282 & \slash & \slash & \textbf{0.016986} & 0.021858 \\
13 & 0.020936 & 0.015969 & 0.015892 & \slash & \textbf{0.015422} & 0.020133 \\
14 & 0.019373 & 0.014752 & 0.014408 & \slash & \textbf{0.014098} & 0.018658 \\
15 & 0.018024 & 0.013184 & \slash & 0.013082 & \textbf{0.012959} & 0.017384 \\
25 & 0.010587 & 0.006952 & \slash & \slash & \textbf{0.006872} & 0.010308 \\
31 & 0.008473 & 0.005443 & 0.005364 & 0.005327 & \textbf{0.005264} & 0.008279 \\
63 & 0.004088 & \slash & \slash & 0.002189 & \textbf{0.002159} & 0.004031 \\
127 & 0.002003 & \slash & \slash & 0.000903 & \textbf{0.000890} & 0.001987 \\
255 & 0.000990 & \slash & \slash & 0.000373 & \textbf{0.000368} & 0.000986 \\
511 & 0.000492 & \slash & \slash & 0.000155 & \textbf{0.000152} & 0.000491 \\\bottomrule
\end{tabular*}
\end{table} 
{\cref{tab:compare-combined} shows that our analytically derived stepsize schedules are the best-known memoryless fixed-step SSs for the compared values of $n$.}
{Notably}, our SSs align precisely with the optimal numerical SS in~\cite{gupta2024branch}  for total iteration numbers $n=1,2,3,4,5,7$, $10$ and surpass~\cite{gupta2024branch}  for other total iteration numbers, exhibiting an absolute discrepancy no larger than $10^{-6}$.
We thus \emph{conjecture} that the SSs generated by our algorithm defined in Definition \ref{def-alg2} are indeed globally optimal.
Additional support for this conjecture comes from the case where $n=2$, where our derived $h_\algB^{(2)} = [\sqrt{2}, (3 + \sqrt{9 + 8\sqrt{2}})/4]^T$ matches the one derived in~\cite[Section 4.2.1]{daccache2019performance}.
From the table, we should also note that  the SSs reported in~\cite{gupta2024branch} may not be actually optimal as the branch and bound method may stop early and there exists numerical error in practical implementation, which is evident from the unhighlighted columns for methods in~\cite{gupta2024branch}.


For the SSs in~\cite{teboulle2023elementary, silver2, grimmer2024acceleratedobjective}, they are encompassed as special cases of our concatenation techniques as demonstrated in Remarks \ref{remark:silver} and \ref{remark:teboulle}.
As a consequence, our constructed sequences $h_\algB^{(n)}$ exhibit a consistent advantage.
As~\cite{grimmer2024acceleratedobjective} can only provide SS for $n=2^l-1$ {\cbl($l\in\N$)}, we present only the associated bounds in~\cref{tab:compare-combined}.
Although the SSs in~\cite{grimmer2024acceleratedobjective} coincide with $h_\algB^{(n)}$ for $n=1$ and $n=3$, theirs are surpassed by ours afterwards.

Finally, we remark that the branch and bound method in~\cite{gupta2024branch} is very time consuming, and impractical for large $n$. As reported in their paper, it costs 1 day and 18 hours for $n=10$ on a standard laptop, and 2 days and 20 hours for $n=25$ on \texttt{MIT Supercloud}. In contrast, {our method is practically efficient for moderate values of $n$}, requiring less than 1 second for $n = 10^4$ and 4 minutes for $n = 10^5$ to compute the SS on a standard laptop.

\subsection{Comparison for Gradient Norms}

For gradient norm bounds in~\cref{tab:compare-combined} (right three columns), we examine the constant $C$ in $\frac{1}{2L} \Vert\mathbf g_n\Vert^2\leqslant C\cdot  (f_0-f_\star)$ of SSs from Rotaru et al.~\cite[Corollary 2.19]{rotaru2024exact}, Grimmer et al.~\cite{grimmer2024acceleratedobjective}, and our $H_\algC$ in~\cref{def-alg3}. Notably, although the SS constructions differ from those in the objective value comparison, the displayed constants $C$ for~\cite{grimmer2024acceleratedobjective} and our method are identical to their respective objective-bound counterparts in~\cref{tab:compare-combined}. This duality phenomenon, discussed in~\cref{sect:H-duality}, causes the shared columns between the two metric groups in the table. As shown in the table, {our method consistently {\cbll achieves better performance than} all other compared methods for all compared iteration numbers in this metric}. 

\section{Conclusion}\label{sec:con}
We introduce three special classes of SSs for smooth convex gradient descent: primitive, dominant, and \dual. We show that new primitive and dominant SSs can be constructed via simple concatenation of {two SSs with fewer steps}. Using these techniques, we develop two types of SSs, $h_\algB$ and $h_\algC$, with state-of-the-art worst-case complexity bounds, which consistently outperform {existing memoryless fixed-step gradient descent methods} in terms of objective values and gradient norms.
Notably, our analytically derived SSs generally surpass the previously numerically computed ``optimal'' SSs and, in other cases, they match exactly within a certain level of numerical precision. Whether these SSs are truly optimal {for memoryless fixed-step gradient descent} remains an open question.


\appendix

\section{Technical Lemmas}
In this appendix, we present lemmas supporting the proofs of the main results.
\begin{lemma}\label{lemma:derivative}
    For $x,y\geqslant 0$, $\left(1+x/(x^{\frac 1\varrho}+y^{\frac 1\varrho})^\varrho \right)\left(1+y/(x^{\frac 1\varrho}+y^{\frac 1\varrho})^\varrho\right)\leqslant 2$  where $\varrho=\log_2(\sqrt 2+1)$.
\end{lemma}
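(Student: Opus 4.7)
The plan is to reduce the bivariate inequality, by homogeneity, to a one-variable statement. Setting $a = x^{1/\varrho}$, $b = y^{1/\varrho}$, and $t = a/(a+b)\in[0,1]$ (the trivial case $a=b=0$ aside), we have $x/(x^{1/\varrho}+y^{1/\varrho})^\varrho = t^\varrho$ and $y/(x^{1/\varrho}+y^{1/\varrho})^\varrho = (1-t)^\varrho$, so the claim is equivalent to
\[\phi(t) := (1+t^\varrho)\bigl(1+(1-t)^\varrho\bigr) \leq 2 \quad\text{for all } t\in[0,1].\]
Using the defining identity $2^\varrho = \sqrt 2+1$, equivalently $(1/2)^\varrho = \sqrt 2-1$, direct substitution gives $\phi(0) = \phi(1/2) = \phi(1) = 2$; so all three are equality cases. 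By the symmetry $\phi(t) = \phi(1-t)$, it then suffices to prove the bound on $[0, 1/2]$.

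Next I would pin down the local structure at the two equality points in $[0,1/2]$. A short computation gives $\phi'(0^+) = -\varrho < 0$ and $\phi'(1/2) = 0$, and a Taylor expansion around $t = 1/2$ yields $\phi''(1/2) = 8\varrho\cdot 2^{-\varrho}(\varrho - \sqrt 2) < 0$, since $\varrho < \sqrt 2$ (an easy exponential estimate: $\sqrt 2 + 1 < 2^{\sqrt 2}$). Hence $t=1/2$ is a strict local maximum of $\phi$ with value $2$, while near $0$ the function strictly dips below $2$. It remains to rule out any interior local maximum of $\phi$ in $(0, 1/2)$ exceeding $2$.

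To this end, I would analyze the critical-point equation. Writing $\phi'(t) = \varrho[t^{\varrho-1}-(1-t)^{\varrho-1}] + \varrho[t(1-t)]^{\varrho-1}(1-2t)$ and substituting $s = t/(1-t)\in(0,1)$, the equation $\phi'(t)=0$ simplifies algebraically to $f_1(s) = f_2(s)$, where $f_1(s) = (s^{\varrho-1}-1)/(s-1)$ and $f_2(s) = s^{\varrho-1}/(1+s)^\varrho$. The integral representation $f_1(s) = (\varrho-1)\int_0^1 (1+v(s-1))^{\varrho-2}\,dv$ shows $f_1$ is strictly decreasing from $1$ to $0$ on $(0,\infty)$, while a logarithmic derivative shows $f_2$ is unimodal with peak at $s=\varrho-1$. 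A sign-change analysis at $s = 0^+,\varrho-1,1,\infty$ (giving signs $+,-,-,+$ for $f_1-f_2$) pins down exactly one root of $f_1 = f_2$ in $(0,1)$, yielding a unique interior critical point $t_0\in(0,1/2)$ of $\phi$. Since $\phi(0) = \phi(1/2) = 2$, this $t_0$ must be a local minimum, concluding $\phi \leq 2$ throughout $[0,1/2]$. The main obstacle is the final uniqueness step: verifying that $f_1 - f_2$ does not return to the positive sign inside $(\varrho-1, 1)$ requires a careful comparison of $f_1$ and $f_2$ on a subinterval where both are decreasing, which seems to be the only genuinely delicate part of the argument.
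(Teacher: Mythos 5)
Your reduction to $\phi(t)=(1+t^\varrho)(1+(1-t)^\varrho)\leqslant 2$ on $[0,1/2]$, the equality cases, and the local computations $\phi'(0^+)=-\varrho$, $\phi''(1/2)=8\varrho\,2^{-\varrho}(\varrho-\sqrt2)<0$ are all correct, and your derivation of the critical-point equation $f_1(s)=f_2(s)$ with $s=t/(1-t)$ checks out. But the argument has a genuine gap, which you yourself flag: the sign pattern $+,-,-$ of $f_1-f_2$ at $s=0^+$, $s=\varrho-1$, $s=1$ only forces exactly one root in $(0,\varrho-1)$ (where $f_1$ decreases and $f_2$ increases, so $f_1-f_2$ is strictly monotone); it does \emph{not} exclude an even number of additional roots in $(\varrho-1,1)$, where both functions decrease. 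Two such extra roots would give $\phi$ an interior local maximum in $(0,1/2)$ whose value is not controlled by the boundary data $\phi(0)=\phi(1/2)=2$, so the conclusion $\phi\leqslant 2$ does not follow. As written, the proof is incomplete at precisely the step that carries the whole weight of the lemma.

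The paper avoids this obstruction by a different final step: instead of analyzing $\phi'$ directly, it sets $\zeta=\log\phi-\log 2$ and observes that
\[
\zeta'(a)=\varrho\left(\frac{1}{a^{1-\varrho}+a}-\frac{1}{(1-a)^{1-\varrho}+(1-a)}\right),
\]
so the sign of $\zeta'$ is governed by the single auxiliary function $\xi(a)=a^{1-\varrho}-(1-a)^{1-\varrho}+2a-1$. Since $\xi''(a)=\varrho(\varrho-1)\bigl(a^{-\varrho-1}-(1-a)^{-\varrho-1}\bigr)\geqslant 0$ on $(0,1/2)$, $\xi$ is convex there; combined with $\xi(0^+)=+\infty$, $\xi(1/4)<0$, $\xi(1/2)=0$, this forces $\xi$ to change sign exactly once, hence $\zeta$ decreases then increases and $\zeta\leqslant\max\{\zeta(0),\zeta(1/2)\}=0$. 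The point is that the logarithm converts the product rule into a sum whose monotonicity structure is transparent after one inversion, which is exactly the uniqueness statement you could not extract from $f_1=f_2$. If you want to salvage your route, you would need to prove that $f_1-f_2$ stays negative on $(\varrho-1,1)$ (e.g.\ by bounding $f_1$ above and $f_2$ below on that interval), but the paper's log-reformulation is the cleaner repair.
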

\begin{proof}[Proof of Lemma \ref{lemma:derivative}] Let $a = x^{\frac 1\varrho}/(x^{\frac 1\varrho}+y^{\frac 1\varrho})\in [0,1] $. Then it is equivalent to showing $(1+a^\varrho)(1+(1-a)^\varrho)\leqslant 2$. Without loss of generality, we assume $a\in [0,1/2]$. Define $\zeta (a) = \log (1+a^\varrho)+\log(1+(1-a)^\varrho)-\log 2$, then $\zeta'(a) = \frac{\varrho a^{\varrho-1}}{1+a^\varrho}- \frac{\varrho (1-a)^{\varrho-1}}{1+(1-a)^\varrho}$. We aim to study the sign of $\zeta'(a)$ to prove $\zeta(a)\leqslant 0$ over $a\in [0,1/2]$ .

Define $\xi(a) = \frac{1+a^\varrho}{a^{\varrho-1}} - \frac{1+(1-a)^\varrho}{(1-a)^{\varrho-1}} = a^{1-\varrho}-(1-a)^{1-\varrho} +2a-1$. Taking the derivative, we get $\xi'(a) = (1-\varrho)(a^{-\varrho}+(1-a)^{-\varrho})+2$ and $\xi''(a) = \varrho(\varrho-1)(a^{-\varrho-1} - (1-a)^{-\varrho-1})$. It is then clear that $\xi''(a)\geqslant 0$ when $a\in (0,\frac12)$, so $\xi'(a)$ increases over the interval. As $\lim_{a\to 0_+}\xi'(a)=-\infty$ and $\xi'(\frac12)>0$, we learn that $\xi(a)$ initially decreases and then increases on $(0,\frac12)$. While $\lim_{a\to 0_+}\xi(a)=+\infty$, $\xi(\frac14)<0$ and $\xi(\frac12)=0$, there exists $a_0\in (0,\frac14)$ such that $\xi(a)\geqslant 0$ when $a\in (0,a_0]$, and $\xi(a)\leqslant 0$ when $a\in [a_0,\frac12]$.

As a consequence, $\zeta'(a)\leqslant 0$ when $a\in (0,a_0]$ and $\zeta'(a)\geqslant 0$ when $a\in [a_0,\frac12]$. This concludes that $\zeta(a)\leqslant \max\{\zeta(0),\zeta(\frac12)\}=0$, thereby completing our proof.
\end{proof}

\begin{lemma}\label{lemma:2.376373}
 Let $\mu\in (0,1)$, $\varrho>0$ and $w\geqslant 0$. Then the inequality  $2(1-\mu)^\varrho + \frac12w\mu^\varrho + \sqrt{2w\mu^\varrho(1-\mu)^\varrho + \frac14w^2\mu^{2\varrho}} \leqslant w$ holds if and only if $w\geqslant \frac{2(1-\mu)^\varrho}{1-\mu^{\varrho/2}}$ holds.
\end{lemma}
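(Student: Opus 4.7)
}
The plan is to reduce the inequality to a single quadratic in $w$ via squaring. To simplify notation I would first set $a=(1-\mu)^\varrho\in(0,1)$ and $b=\mu^\varrho\in(0,1)$, so that the inequality to analyze becomes
\[
2a+\tfrac{1}{2}wb+\sqrt{2wab+\tfrac{1}{4}w^2b^2}\;\leq\;w,
\]
and the claimed threshold becomes $w\geq \tfrac{2a}{1-\sqrt{b}}$ (since $\sqrt{b}=\mu^{\varrho/2}$).

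Next I would isolate the square root as $\sqrt{2wab+\tfrac14 w^2b^2}\leq w-2a-\tfrac12 wb$. Since the left side is nonnegative, a necessary condition for the inequality to hold is $w-2a-\tfrac12 wb\geq 0$, i.e.\ $w\geq \tfrac{4a}{2-b}$. Under this condition I would square both sides; after expanding $(w-2a-\tfrac12 wb)^2$ and canceling the common $\tfrac14 w^2b^2$ and $2abw$ terms, the inequality reduces equivalently to
\[
(1-b)\,w^2-4aw+4a^2\;\geq\;0.
\]
This is a quadratic in $w$ with positive leading coefficient (as $b<1$) and discriminant $16a^2-16a^2(1-b)=16a^2b>0$. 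Its roots are
\[
w_{\pm}\;=\;\frac{4a\pm 4a\sqrt{b}}{2(1-b)}\;=\;\frac{2a}{1\mp\sqrt{b}},
\]
so the quadratic is nonnegative precisely on $w\leq w_{-}=\tfrac{2a}{1+\sqrt{b}}$ or $w\geq w_{+}=\tfrac{2a}{1-\sqrt{b}}$.

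The final step is to combine this with the sign constraint $w\geq \tfrac{4a}{2-b}$. Because $\sqrt{b}>b/2$ for $b\in(0,1)$, one has $1-\sqrt{b}<1-b/2$ and $1+\sqrt{b}>1-b/2$, hence
\[
w_{-}\;<\;\frac{4a}{2-b}\;<\;w_{+}.
\]
Thus the branch $w\leq w_{-}$ is incompatible with the sign constraint and can be discarded, while the branch $w\geq w_{+}$ automatically enforces $w\geq \tfrac{4a}{2-b}$. Consequently, the squaring step is in fact an equivalence, and the original inequality holds if and only if $w\geq w_{+}=\tfrac{2(1-\mu)^\varrho}{1-\mu^{\varrho/2}}$. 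The only subtle point — and the one I would write out most carefully — is verifying that the squaring is reversible exactly when $w$ is on the correct branch, which boils down to the chain of comparisons $w_{-}<\tfrac{4a}{2-b}<w_{+}$ above.
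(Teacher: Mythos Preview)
Your proposal is correct and follows essentially the same route as the paper: substitute to abbreviate the two powers, isolate the square root, square to reduce to a quadratic in $w$, identify the two roots, and discard the lower root via the sign constraint. The only cosmetic differences are that the paper uses the normalization $x=2(1-\mu)^\varrho$, $y=\tfrac12\mu^\varrho$ (so your $a,b$ satisfy $x=2a$, $b=2y$), and it eliminates the lower branch by the slightly stronger observation $w\geq x+2wy$ (coming from $\sqrt{2wxy+w^2y^2}\geq wy$) rather than by comparing the sign threshold $\tfrac{4a}{2-b}$ directly to $w_\pm$ as you do.
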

\begin{proof}[Proof of Lemma \ref{lemma:2.376373}]
Let $x=2(1-\mu)^\varrho\in(0,2)$ and $y=\frac12\mu^\varrho\in (0,\frac12)$. Then it suffices  to show that $x+wy+\sqrt{2wxy + w^2y^2}\leqslant w$ holds if and only if $w\geqslant x/(1-\sqrt{2y})$.

First assume $x+wy+\sqrt{2wxy + w^2y^2}\leqslant w$. Then we have the quadratic inequality $(w-wy-x)^2\geqslant 2wxy+w^2y^2$, i.e.,  $(1-2y)w^2-2xw+x^2\geqslant 0$, implying $w\geqslant \frac{x+\sqrt{2x^2y}}{1-2y}$ or $w\leqslant \frac{x-\sqrt{2x^2y}}{1-2y}$. Since
$w\geqslant x+wy+\sqrt{2wxy + w^2y^2}\geqslant x+wy+wy=x+2wy$, it requires $w\geqslant \frac{x}{1-2y}$. This implies  $w\geqslant \frac{x+\sqrt{2x^2y}}{1-2y} = x\cdot \frac{1+\sqrt{2y}}{1-2y} = \frac{x}{1-\sqrt{2y}}$.

Now suppose $w\geqslant \frac{x}{1-\sqrt{2y}}$. Then it also leads to the quadratic inequality  $(w-wy-x)^2\geqslant 2wxy+w^2y^2$. Moreover, $w - wy-x \geqslant x\cdot \frac{\sqrt{2y}-y}{1-\sqrt{2y}}\geqslant 0$ because $w\geqslant \frac{x}{1-\sqrt{2y}}$ and $y\in (0,\frac12)$. Hence we conclude $w-wy-x\geqslant\sqrt{2wxy+w^2y^2}$.
\end{proof}

\begin{lemma}\label{lemma:CTP} For $x,y\geq 0$ and  $\varphi(\cdot,\cdot)$   in~\eqref{eq:def-varphi}, it holds that $1< \varphi(x,y)  <y+2$.
\end{lemma}
\begin{proof}
Because $   (x+y+2)^2+4(x+1)(y+1)=(x+3y+4)^2-8(y+1)^2$, we have that $1=\frac{-(x+y)+(x+y+2)}{2}<\varphi(x,y)<\frac{-(x+y)+(x+3y+4)}{2}=y+2$.
\end{proof}

 \begin{lemma}\label{lemma:CDP}
 For $x,y\geq 0$ and $\psi(\cdot,\cdot)$   in~\eqref{eq:def-psi}, it holds that $1< \psi(x,y)  <x+2$.
\end{lemma}
\begin{proof}
Because $(2y+1)(2y+8x+9) = (2y+4x+5)^2 - 16(x+1)^2$, we have that 
$1=\frac{3-2y+(2y+1)}{4}<\psi(x,y)<\frac{3-2y+(2y+4x+5)}{4}=x+2$.
\end{proof}

\section{Deferred Proofs of Propositions in~\cref{sect:asymptotic}}\label{appendix:asymptotic}

In this appendix, we prove the estimates in~\cref{sect:asymptotic}.

\begin{proof}[Proof of~\cref{them-convergence1}] \label{proof-them-convergence1} First we prove  $\mathbf 1^Th_\algA^{(n)}+1\leqslant (n+1)^\varrho$ by induction. The result trivially holds for $n =0$ as $h_\algA^{(0)}=[~]^T$.  Let $r_n=\mathbf 1^Th_\algA^{(n)}$. From~\cref{prop-recur1} and~\cref{prop-eq-recur-r1}, we have recursion
\begin{equation}\label{convergence-r1}\begin{aligned}
    r_n&=\max_{0\leqslant k<n}\left\{ r_k + \varphi(r_k,r_{n-k-1})+r_{n-k-1}    \right\},
    \end{aligned}
\end{equation}
where $\varphi(x,y)=  (-x-y+\sqrt{(x+y+2)^2+4(x+1)(y+1)})/2$.
By induction, we assume  $r_k+1\leqslant (k+1)^\varrho$ for all $k<n$ and will show that $ r_n+1\leqslant (n+1)^\varrho$.

Adding $1$ on both sides of~\eqref{convergence-r1}, it then suffices to show that for $\forall 0\leqslant k<n$,
\begin{equation}\small \label{convergence-r2}
    \frac{(r_k+1)+(r_{n-k-1}+1)+\sqrt{(r_k+1+r_{n-k-1}+1)^2+4(r_k+1)(r_{n-k-1}+1)}}{2}\leqslant (n+1)^\varrho,
\end{equation}
where $r_k+1\leqslant (k+1)^\varrho$ and $r_{n-k-1}+1\leqslant (n-k)^\varrho$.
Let $w>0$ be the root of $(1+x/w)(1+y/w) = 2$, which is explicitly given by $w(x,y) = \frac{x+y+\sqrt{(x+y)^2+4xy}}{2}$. By~\cref{lemma:derivative} and noting that $(1+x/w)(1+y/w)$ is a decreasing function of $w$,
we obtain  $w(x,y)\leqslant (x^{\frac 1\varrho}+y^{\frac 1\varrho})^\varrho$. Setting $x = r_k +1\leqslant (k+1)^\varrho $ and $y = r_{n-k-1}+1\leqslant (n-k)^\varrho$, we  get $w(x,y)\leqslant (n+1)^\varrho$, which is just~\eqref{convergence-r2}. Thus the induction is complete.

Lastly, for $n=2^l-1$, $l\in\N_0$, by taking $k=n-k-1=2^{l-1}-1$ in~\eqref{convergence-r1}, we obtain that $r_n\geqslant 2r_k+\varphi(r_k,r_k)= (\sqrt 2+1)r_k+\sqrt 2$. Here we have used the fact $\varphi(x,x) = (\sqrt 2-1)x+\sqrt 2$. A simple induction on $l$ gives the lower bound $r_n +1\geqslant (\sqrt 2+1)^l = (n+1)^\varrho$ when $n=2^l-1$ for $l\in\mathbb N_0$. This, together with $r_n+1\leqslant (n+1)^\varrho$, indicates $r_n+1=(n+1)^\varrho$  when $n=2^l-1$, {$l\in\N_0$}.
\end{proof}

\begin{proof}[Proof of~\cref{them-convergence1.2}] \label{proof-them-convergence1.2}
As in the proof of~\cref{them-convergence1}, let $r_n=\mathbf 1^Th_\algA^{(n)}$. Then we still have~\eqref{convergence-r1} and thus
$r_n\geqslant r_k + \varphi(r_k,r_{n-k-1}) + r_{n-k-1}$ for $  0\leqslant k<n$,
where $\varphi(x,y)= (-x-y+\sqrt{(x+y+2)^2+4(x+1)(y+1)})/2$. Letting $k=n-1$, we immediately have $r_n > r_{n-1}$, implying $\{r_n\}$ is strictly increasing. Meanwhile, letting $k=\lfloor (n-1)/2\rfloor$ and $k'=n-k-1=\lceil (n-1)/2\rceil$ yields
\begin{equation}\label{convergence-r1.5}\begin{aligned}
    r_n&\geqslant r_k+\varphi(r_k,r_{k'})+r_{k'}
\stackrel{\text{(a)}}{\geqslant}r_k+\varphi(r_k,r_k)+r_k\stackrel{\text{(b)}}{=}(\sqrt 2+1)r_k +\sqrt 2,
    \end{aligned}
\end{equation}
where the inequality (a) uses the fact that $\varphi(x,y)+y$ is monotonically increasing with respect to $y$ and that $r_{k'}\geqslant r_k$, and equality (b) uses $\varphi(x,x) = (\sqrt 2-1)x+\sqrt 2$.

Noting that $k \ge n/2-1$, we get $k+2\geqslant (n+2)/2$. Hence~\eqref{convergence-r1.5} leads to
\begin{equation}\label{convergence-r1.52}
    \begin{aligned}
        \frac{r_n+1}{(n+2)^\varrho}
        \geqslant (\sqrt 2+1)\cdot \frac{r_k + 1}{(n+2)^\varrho}= \frac{r_k + 1}{((n+2)/2)^\varrho} \geqslant \frac{r_k + 1}{(k+2)^\varrho}.
    \end{aligned}
\end{equation}
When $2^l-1\leqslant n\leqslant 2^{l+1}-2$ and $l\geqslant 1$, for $k=\lfloor (n-1)/2\rfloor$ we have $2^{l-1}-1\leqslant k\leqslant 2^l - 2$. In this case, the right hand side of~\eqref{convergence-r1.52} is no smaller than $\nu_{l-1}$ by the definition of $\{\nu_l\}$. This implies that $(r_n+1)/(n+2)^\varrho\geqslant \nu_{l-1}$ holds for all $2^l-1\leqslant n\leqslant 2^{l+1}-2$ and $l\geqslant 1$. As a result, $\nu_l\geqslant \nu_{l-1}$, suggesting $\{\nu_l\}$ is non-decreasing.

In particular, $\nu_0$ is exactly the value $(\mathbf 1^Th_\algA^{(0)}+1)/(0+2)^\varrho = 1/(\sqrt 2+1)=\sqrt 2-1$. Also, for any $n\in\mathbb N_0$, there exists a nonnegative integer $l$ such that $2^l - 1\leqslant n\leqslant 2^{l+1}-2$. Hence we have $(\mathbf 1^Th_\algA^{(n)}+1)/(n+2)^\varrho\geqslant \nu_l\geqslant \nu_0=\sqrt 2 - 1$.
\end{proof}

\begin{proof}[Proof of~\cref{them-convergence2.1}]
Noting that the result holds for $n=0$ where $h_\algB^{(0)}=[~]^T\in\mathbb R^0$, we proceed by induction.
Let $z_n = 2(\mathbf 1^Th_\algB^{(n)})+1$. Given iteration count $n\geqslant 1$,~\eqref{prop-eq-recur-r2}  implies
\begin{equation}\label{eq:zb-recur}
    z_n=\max_{0\leqslant k<n}\frac{4(\mathbf 1^Th_\algA^{(k)}+1)+z_{n-k-1}+\sqrt{z_{n-k-1}(z_{n-k-1} + 8(\mathbf 1^Th_\algA^{(k)}+1))}}{2}
\end{equation}
Assume the bound $z_k\leqslant \omega (k+1)^\varrho$ holds for each $k=0,1,\dotsc,n-1$ by induction. Also,~\cref{them-convergence1} claims that $\mathbf 1^Th_\algA^{(k)}+1\leqslant (k+1)^\varrho$. Letting $\mu=(n-k)/(n+1)$, we get $z_{n-k-1}\leq \omega \mu^\varrho(n+1)^\varrho$ and $\mathbf 1^Th_\algA^{(k)}+1\leqslant (1-\mu)^\varrho(n+1)^\varrho$.
Then~\eqref{eq:zb-recur} implies
\begin{equation}\label{eq:hb-recur4}
   \frac{ z_n}{(n+1)^\varrho}\leqslant \max_{0\leqslant k<n}\left\{
    2(1-\mu)^\varrho + \frac12\omega\mu^\varrho + \sqrt{2\omega \mu^\varrho(1-\mu)^\varrho + \frac14\omega^2\mu^{2\varrho}}
    \right\}.
\end{equation}
Finally, since $\omega$ is the maximum of $\frac{2(1-\mu)^\varrho}{1-\mu^{\varrho/2}}$ over $\mu\in (0,1)$,  Lemma \ref{lemma:2.376373} in appendix implies that the right hand side of~\eqref{eq:hb-recur4} does not exceed $\omega$, regardless of the value of $\mu$. Thus we have $z_n\leqslant \omega(n+1)^\varrho$ and the induction is complete.
\end{proof}

\begin{proof}[Proof of~\cref{them-convergence2.2}]
Denote $z_n = 2(\mathbf 1^Th_\algB^{(n)})+1$ and it has $\liminf_{n\to\infty} z_n/n^\varrho=2\bar\nu_\algB$. Note that~\eqref{eq:zb-recur} in the proof of~\cref{them-convergence2.1} still holds.
Now we fix $\mu\in (0,1)$,   let $n\to\infty$ and set $k=\lfloor (1-\mu)(n-1)\rfloor$. 
Then {$\liminf_{n\to\infty}\mathbf 1^Th_\algA^{(k)}/n^\varrho\geq \bar\nu_\algA (1-\mu)^\varrho$ and $\liminf_{n\to\infty}z_{n-k-1}/n^\varrho\geq 2\bar\nu_\algB \mu^\varrho$}.
Dividing~\eqref{eq:zb-recur} by $n^\varrho$ on both sides and taking the limit inferior, we obtain the inequality
\begin{equation}\label{eq:liminf-hb-4}2\bar\nu_\algB\geqslant 2\bar\nu_\algA(1-\mu)^\varrho + \frac12(2\bar\nu_\algB) \mu^\varrho + \sqrt{2\bar\nu_\algA  (2\bar\nu_\algB)\cdot(\mu(1-\mu))^\varrho + \frac14(2\bar\nu_\algB)^2\mu^{2\varrho}}.
\end{equation}
Dividing~\eqref{eq:liminf-hb-4} by $\bar\nu_\algA$ and applying Lemma \ref{lemma:2.376373} in appendix,  we get $\frac{2\bar\nu_\algB}{\bar\nu_\algA}\geqslant \frac{2(1-\mu)^\varrho}{1-\mu^{\varrho/2}}$.
This holds for arbitrary $\mu\in (0,1)$ and thus implies  $\frac{2\bar\nu_\algB}{\bar\nu_\algA}\ge \omega$ and 
$\bar\nu_\algB\ge \frac12\omega \bar\nu_\algA$.
\end{proof}

\section{Exact $h_\algB^{(n)}$}


 \begin{table}[http]
    \caption{Exact SSs of Definition~\ref{def-alg2} for $1\leqslant n\leqslant 7$ with the value $\beta$ of \CPD\  highlighted.}
    \label{tab:Hb-exact}
    \renewcommand{\arraystretch}{2.5}\begin{tabular*}{\textwidth} {@{}c@{\hspace{0.5cm}}l}
    \bottomrule
        $n$ & $(h_\algB^{(n)})^T$ \\ \midrule
        1  & $\displaystyle \left[\mathbf{\frac32}\right]$ \\
        2 & $\displaystyle \left[\sqrt 2,\mathbf{\frac{3+\sqrt{9+8\sqrt 2}}{4}}\right]$\\
        3 & $\displaystyle \left[\sqrt 2,\mathbf{1+\sqrt 2},\frac32\right]$\\
        4 & $\displaystyle \left[ \sqrt 2,\frac{\sqrt{10+8\sqrt 2}-\sqrt 2}{2},\mathbf{\sqrt{3+\sqrt 2+\sqrt{10+8\sqrt 2}}},\frac32\right]$\\
        5 & $\displaystyle \left[\sqrt 2,2,\sqrt 2,\mathbf{\sqrt{7+4\sqrt 2}},\frac32\right]$\\
        6 & \scalebox{0.8}{$\displaystyle \left[\sqrt 2,2,\sqrt 2, \mathbf{
        \frac{3-4 \sqrt{2}-\sqrt{9+8 \sqrt{2}}+\sqrt{562+400 \sqrt{2}+\left(58+40 \sqrt{2}\right)\sqrt{9+8 \sqrt{2}} }}{8}
        },\sqrt 2,\frac{3+\sqrt{9+8\sqrt 2}}{4}\right]$}\\
        7 & \scalebox{0.8}{$ \left[\begin{aligned}
        & \sqrt 2,\frac{\sqrt{10+8\sqrt 2}-\sqrt 2}{2},\frac{-\sqrt{10+8\sqrt 2}-3\sqrt 2+2 \sqrt{19+14 \sqrt{2}+(7+4\sqrt{2})
   \sqrt{5+4 \sqrt{2}}}}{4},\sqrt 2,\\  &\quad \mathbf{
   1+\frac{1}{8} \left(5+4 \sqrt{2}+\sqrt{9+8 \sqrt{2}}\right) \left(\sqrt{1+\frac{a}{5+4 \sqrt{2}+\sqrt{9+8 \sqrt{2}}}}-1\right)
   }, \sqrt 2,\frac{3+\sqrt{9+8\sqrt 2}}{4}\end{aligned}\right] $}\\
    \bottomrule
    \end{tabular*}
   {\footnotesize   For $n=7$, the value $a= 16+12 \sqrt{2}+4\sqrt{10+8 \sqrt{2}} +8 \sqrt{19+14 \sqrt{2}+(7+4 \sqrt{2})\sqrt{5+4 \sqrt{2}}}$.}
\end{table}

\bibliographystyle{siamplain}
\bibliography{references}
\end{document}